\newtheorem {theorem}{Theorem}[section]
\newtheorem {proposition}{Proposition}[section]
\newtheorem {corollary}{Corollary}[section]
\newtheorem {lemma}{Lemma}[section]
\newtheorem{algorithm}[theorem]{Algorithm}
\newtheorem {example}{Example}[section]
\newtheorem {definition}{Definition}[section]
\newtheorem {remark}{Remark}[section]
\newtheorem {assumption}{Assumption}[section]
\newtheorem{condition}[theorem]{Condition}
\newcommand{\Int}[1]{{\sf int}(#1)}
\newcommand{\bfz}{0}
\newcommand{\bfV}{\mathbf{V}}
\newcommand{\RR}{\mathbb{R}}
\newcommand{\jac}{{\sf Jac}}
\newcommand{\CC}{{\mathbb C}}
\newcommand{\GAMMA}{\Gamma(f,S,x^*)}
\newcommand{\GAMMAP}{\Gamma_P(f,S,x^*)}
\newcommand{\GAMMAA}{\Gamma_{A^{-T}A^{-1}}(f,S,x^*)}
\def\ees{{\accent"5E e}\kern-.385em\raise.2ex\hbox{\char'23}\kern-.08em}
\def\EES{{\accent"5E E}\kern-.5em\raise.8ex\hbox{\char'23 }}
\def\ow{o\kern-.42em\raise.82ex\hbox{
   \vrule width .12em height .0ex depth .075ex \kern-0.16em \char'56}\kern-.07em}
\def\OW{O\kern-.460em\raise1.36ex\hbox{
\vrule width .13em height .0ex depth .075ex \kern-0.16em \char'56}\kern-.07em}
\title{On types of KKT points in polynomial optimization}
\author{Feng Guo}
\address[Feng Guo]{School of Mathematical Sciences, Dalian  University of Technology, Dalian, 116024, China}
\email{fguo@dlut.edu.cn}
\author{Do Sang Kim}
\address[Do Sang Kim]{Department of Applied Mathematics, Pukyong National University, Busan, 48513, Korea}
\email{dskim@pknu.ac.kr}
\author{Liguo Jiao$^*$}
\address[Liguo Jiao]{School of Mathematical Sciences, Soochow University, Suzhou 215006, Jiangsu Province, China}
\email{hanchezi@163.com}
\thanks{$^*$Corresponding Author.}
\author{TI\EES N-S\OW N PH\d{A}M}
\address[Ti\ees n-S\ow n Ph\d{a}m]{Department of Mathematics, University of Dalat, 1, Phu Dong Thien Vuong, Dalat, Vietnam}
\email{sonpt@dlu.edu.vn}
\date{\today}
\begin{document}

\begin{abstract}
Let $f$ be a real polynomial function with $n$ variables and $S$ be a
basic closed semialgebraic set in $\RR^n$.
In this paper, we are interested in the problem of identifying the type (local
minimizer, maximizer or not extremum point) of a given isolated KKT point
$x^*$ of $f$ over $S.$
To this end, we investigate some properties of
the tangency variety of $f$ on $S$ at $x^*,$ by which we introduce
the definition of {\it faithful radius} of $f$ over $S$ at $x^*.$
Then, we show that the type of $x^*$ can be determined by the global
extrema of $f$ over the intersection of $S$ and the Euclidean ball
centered at $x^*$ with a faithful radius.  Finally, we propose
an algorithm involving algebraic computations to compute a faithful
radius of $x^*$ and determine its type.
\end{abstract}
\subjclass[2010]{65K05, 68W30}
\keywords{polynomial functions, KKT points, tangency varieties, types,
faithful radii.}
\maketitle

\section{Introduction}\label{sec::intro}
Consider the following constrained polynomial optimization problem
\begin{equation}\label{eq::P}
	\left\{\begin{aligned}
		\min_{x\in\RR^n}&\ \ f(x)\\
		\text{s.t.}&\ \ g_1(x)=0, \ldots, g_l(x)=0,\\
		&\ \ h_1(x)\ge 0, \ldots, h_m(x)\ge 0,
	\end{aligned}\right.
\end{equation}
where $f(x)$, $g_i(x)$'s, $h_j(x)$'s $\in\RR[x]$ are polynomials in
$x=(x_1,\ldots,x_n)$ with real coefficients.
Denote by $S$ the feasible set of \eqref{eq::P} which is a {\itshape basic closed
semialgebraic set} in $\RR^n$.

Let $x^*\in S$ be a Karush--Kuhn--Tucker (KKT for short) point of \eqref{eq::P}, i.e., the first order necessary optimality
conditions
\[
\nabla f(x^*)-\sum_{i = 1}^{l}\lambda_i \nabla g_i(x^*)-\sum_{j\in
J(x^*)}\nu_j \nabla h_j(x^*) = 0 \quad \text{and}\quad  \nu_j\ge 0,\ \ j\in J(x^*),
\]
hold at $x^*$ for some Lagrange multipliers $\lambda_i$'s, $\nu_j$'s $\in
{\Bbb R}$, where $J(x^*)$ denotes the active set at $x^*$.
Our goal in this paper is
to determine the type of $x^*$. In other words, is $x^*$ a local minimizer,
maximizer or not extremum point of \eqref{eq::P}?

The motivation of our research is as follows. Many nonlinear
programming algorithms are designed to generate a sequence of points
which, under certain conditions, converges to a KKT point. However,
there is no theoretical guarantee that the obtained KKT point is local
minimizer. Some algorithms are purely based on solving the system of
first order optimality conditions of the optimization problem. Hence,
the obtained KKT point may even be a maximizer. Unfortunately, we will
see that testing the type of a KKT point may be a hard problem as
shown by Murty and Kabadi in \cite{Murty1987}. In fact, for
\eqref{eq::P}, to decide the type of $x^*$, we may first consider the
second-order necessary condition for $x^*$ to be a local minimizer
(resp., maximizer). Let $H$ be the Hessian matrix of the Lagrange function
of \eqref{eq::P} with respect to $x$ at $x^*$. We need to check
whether $y^T H y$ is nonnegative (resp., nonpositive) for all $y \in Y,$ where
\[
Y:=\left\{y\in\RR^n :
		\left\{
		\begin{aligned}
			&\nabla g_i(x^*)^Ty=0,\ i=1,\ldots,l,\\
			&\nabla h_j(x^*)^Ty=0,\ \text{for all}\ j\in J(x^*)\
			\text{with}\ \nu_j>0\\
			&\nabla h_j(x^*)^Ty\ge 0,\ \text{for all}\ j\in J(x^*)\
			\text{with}\ \nu_j=0\\
		\end{aligned}
		\right.
\right\}.
\]
If the second-order necessary condition holds, we may further consider
the second-order
sufficient condition. That is, to decide if $y^T H y$ is positive
(resp., negative) for all $0\neq y\in Y$.
However, when $H$ is not positive semidefinite and the set $\{j\mid
j\in J(x^*), \ \nu_j=0\}$ is nonempty, Murty and Kabadi showed that if
the entries in $H$, $\nabla g_i(x^*)$'s, $\nabla h_j(x^*)$'s are rational, then
checking whether the second-order sufficient condition holds is
co-NP-complete (c.f. \cite[Theorem 4]{Murty1987}). Even if we are able
to check the  second-order sufficient condition, if it is not
satisfied, there is no straightforward and simple method to determine
whether $x^*$ is a local minimizer (resp., maximizer) by present theory. In
particular, Murty and Kabadi showed (c.f. \cite[Theorem 2]{Murty1987}) that checking if the KKT point
$0$ is not a local minimizer for the nonconvex quadratic problem
$\min_{x\ge 0} x^TDx$ is NP-complete, where $D$ is a not positive matrix.

To the best of our knowledge, there is very little related work in the
literature addressing this issue, even in the unconstrained case. If
there is no constraint, the problem reduces to determining the type
of a degenerate real critical point of $f$, i.e., a point at which the
gradient $\nabla f$ vanishes and the Hessian matrix $\nabla^2f$ is
singular. To solve it, it is
intuitive to consider the higher order partial derivatives of $f$.
However, it is difficult to take into account only the higher order
derivatives of $f,$ to systematically solve this problem.
When $f$ is a sufficiently smooth function (not necessarily a
polynomial), some partial answers to this problem were given in
\cite{Bolis1980,Cushing1975} under certain assumptions on its Taylor
expansion at the point. When $f$ is a multivariate real polynomial, Qi
investigated its critical points and extrema structures in
\cite{Qi2004} without giving a method to determine their
types.  Nie gave a numerical method in \cite{Nie2015} to compute all
$H$-minimizers (critical points at which the Hessian matrices are
positive semidefinite) of a polynomial by semidefinite relaxations.
However, there is no completed procedure in \cite{Nie2015} to verify
that a $H$-minimizer is a saddle point.

Very recently, Guo and Ph\d{a}m~\cite{GP2017} proposed a method to
determine the type of an isolated degenerate real critical point of a
multivariable real polynomial. They showed that the type of the
critical point can be determined by the global extrema of the
polynomial over the Euclidean ball centered at the critical point with
the so-called {\itshape faithful radius}. An algorithm involving algebraic
computations to compute a faithful radius of the critical point is
given in \cite{GP2017}. To decide the type, instead of computing the
extrema of the polynomial over the ball which itself is NP-hard, they
presented an algorithm to identify the type by computing isolating
intervals for each real root of a zero-dimensional polynomial system,
which can be carried out efficiently (c.f.
\cite{ABRW,FGLM1993,GHMP,GIUSTI2001154,Rouillier1999}).
In this paper, we extend the method in \cite{GP2017} to constrained
case \eqref{eq::P}. We generalize the definition of faithful radius to
an isolated KKT point $x^*$ by means of the tangency variety of $f$ at
$x^*$ over the constraint $S$ and derive analogue strategies as proposed in \cite{GP2017} to
decide the type of $x^*$.

To end this section, we would like to point out that determining the type of
the KKT point $x^*$ is a special case of the {\itshape quantifier
elimination problem}. Precisely,
determining the type of $x^*$ is equivalent to
checking the truth of the following first-order sentences
\begin{equation}\label{eq::cad}
	\begin{aligned}
		\forall r \exists x\ \
		(r=0)\vee((\Vert x-x^*\Vert^2\le
		&r^2)\wedge(g_1(x)=0)\wedge\cdots\wedge(g_l(x)=0)&\\
	&\wedge(h_1(x)\ge 0)\wedge\cdots\wedge(h_m(x)\ge 0)\wedge(f(x)>f(x^*))),&\\
		\forall r \exists x\ \
		(r=0)\vee((\Vert x-x^*\Vert^2\le
		&r^2)\wedge(g_1(x)=0)\wedge\cdots\wedge(g_l(x)=0)&\\
	&\wedge(h_1(x)\ge 0)\wedge\cdots\wedge(h_m(x)\ge 0)\wedge(f(x)<f(x^*))),&\\
	\end{aligned}
\end{equation}
where $\vee$ and $\wedge$ respectively denote the logical connectives
``or'' and ``and''. These decision problems can be solved by
algorithms based on the {\itshape cylindrical algebraic decomposition}
(CAD) \cite{ARG,QECAD}.
However, the arithmetic complexity for solving them by the
CAD is ${((l+m+3)D)^{O(1)}}^{n+1}$ where $D\ge 2$ is a bound for the
degrees of $f$, $g_i$'s and $h_j$'s  \cite[Excercise 11.7]{ARG}. The
complexity
is doubly exponential in $n$ and limits its practical application to
nontrivial problems. Indeed, a cylindrical decomposition of the whole
space seems to be superfluous for determining the local extremality of
$x^*$. Comparatively, by investigating the local values of $f$ on its
tangency variety on $S$ at $x^*$ (Definition \ref{def::tangency}), the
approach proposed in this paper
enjoys a lower complexity at least in certain circumstances (see
discussions in Section \ref{sec::complexity}), which can be observed
from the numerical experiments in Section \ref{sec::computation}.

\vskip 5pt
The paper is organized as follows. Some notation and preliminaries
used in this paper are given in Section~\ref{SecPreliminaries}.
We study some properties of the set of KKT points and tangency
varieties in Section~\ref{SectionTangencyVariety}.
In Section~\ref{sec::type}, we define the faithful radius of an
isolated KKT point, by which we show how to decide the type of the KKT
point. Some computational aspects are investigated in
Section~\ref{sec::computation},  where the algorithm for
determining its type of an isolated KKT point are presented. For a
better readabilty, we put the correctness proof and complexity
discussions of the algorithm in the Appendix \ref{appendix}.

\section{Preliminaries} \label{SecPreliminaries}

We use the following notation and terminology.
The symbol $\RR$ (resp., $\CC$) denotes the set of real (resp., complex) numbers.
We denote by $\RR_+$ the set of nonnegative real numbers.
$\RR[x]=\RR[x_1,\ldots,x_n]$ denotes the ring of polynomials in variables
$x=(x_1,\ldots,x_n)$ with real coefficients. The Euclidean space $\mathbb{R}^n$ is equipped with the usual scalar
product $\langle \cdot, \cdot \rangle$ and the corresponding Euclidean
norm $\|\cdot\|.$ For convenience, let $\Vert
x\Vert^2:=x_1^2+\cdots+x_n^2$ for any $x\in\CC^n$.
Denote $\RR^{n\times n}$ (resp., $\CC^{n\times n}$) as the set of $n\times n$ matrices
with real (resp., complex) number entries.
Denote by $\Vert A\Vert$ the $2$-norm of a matrix $A\in\RR^{n\times n}$.
For $R > 0$, denote by $\mathbb{B}_R(x)$ (resp., $\mathbb{B}_R$) the
closed ball with center $x$ (resp., $0$) and radius $R.$ For a subset $S \subset \mathbb{R}^n$, the interior and closure of $S$ in Euclidean topology is denoted by
${\Int S}$ and $\bar{S},$ respectively. The notation $C^p$ means $p$-times continuously differentiable;
$C^\infty$ is infinitely continuously differentiable.
If $f, g$ are two functions with suitably chosen domains and codomains, then
$f\circ g$ denotes the composite function of $f$ and $g$.

\subsection{Semialgebraic geometry}
Let us recall some notion and results from semialgebraic geometry (see,
for example, \cite{Benedetti1990, Bochnak1998, Dries1996}) which we
need.

\begin{definition}{\rm
\begin{enumerate}
  \item[(i)] A subset of $\mathbb{R}^n$ is said {\em semialgebraic} if
	  it is a finite union of sets of the form
$$\{x \in \mathbb{R}^n \ | \ f_i(x) = 0, i = 1, \ldots, k;\ f_i(x) > 0,
i = k + 1, \ldots, p\},$$
where all $f_{i}$'s are in $\RR[x]$.
 \item[(ii)]
Let $A \subset \Bbb{R}^n$ and $B \subset \Bbb{R}^m$ be semialgebraic
sets. A map $F \colon A \to B$ is said to be {\em semialgebraic} if
its graph
$$\{(x, y) \in A \times B \ | \ y = F(x)\}$$
is a semialgebraic subset in $\Bbb{R}^n\times\Bbb{R}^m.$
\end{enumerate}
}\end{definition}

The class of semialgebraic sets is closed under taking finite
intersections, finite unions, and complements; a Cartesian product of
semialgebraic sets is a semialgebraic set. Moreover, a major fact
concerning the class of semialgebraic sets is its stability under
linear projections (see, for example,~\cite{Benedetti1990,Bochnak1998}).

\begin{theorem}[Tarski--Seidenberg Theorem] \label{TarskiSeidenbergTheorem}
The image of a semialgebraic set by a semialgebraic map is semialgebraic.
\end{theorem}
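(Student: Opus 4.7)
My plan is to reduce the statement to a single elimination step. If $F \colon A \to B$ is a semialgebraic map with graph $\Gamma_F \subset \RR^n \times \RR^m,$ then $F(A) = \pi(\Gamma_F),$ where $\pi \colon \RR^n \times \RR^m \to \RR^m$ denotes projection onto the second factor. Since $\Gamma_F$ is semialgebraic by the very definition of a semialgebraic map, it suffices to prove that every coordinate projection sends semialgebraic sets to semialgebraic sets.

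I would then iterate one variable at a time, so that the task reduces to showing: for every semialgebraic set $S \subset \RR^{n+1},$ the image $\pi(S) \subset \RR^n$ under the projection $(x, y) \mapsto x$ is semialgebraic. Because the class of semialgebraic sets is closed under finite unions and projection commutes with finite unions, I may further assume that $S$ is basic, namely
\[ S = \{(x, y) \in \RR^n \times \RR : f_1(x,y) = 0, \ldots, f_k(x,y) = 0,\ f_{k+1}(x,y) > 0, \ldots, f_p(x, y) > 0 \} \]
for some $f_i \in \RR[x, y].$ Thus everything is concentrated in the elimination of a single real variable $y.$

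The core of the argument is to view each $f_i(x, y)$ as a univariate polynomial in $y$ with coefficients in $\RR[x],$ and to perform a case analysis in $x$ on the sign pattern of these coefficients together with the signs of suitable auxiliary polynomials --- for instance the principal subresultant coefficients of the $f_i$'s and their derivatives, or the coefficients of polynomials in a Sturm-type sequence. On each sign stratum in $\RR^n$ the sign variation of the $f_i(x, \cdot)$ stabilises, so the existence of a $y \in \RR$ realising the prescribed signs of the $f_i$'s becomes a purely combinatorial condition depending only on the stratum. Consequently $\pi(S)$ is a finite union of sets defined by polynomial sign conditions in $x,$ which is exactly the required semialgebraic description.

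The main obstacle I foresee is the bookkeeping in this elimination step: as $x$ varies, leading coefficients of the $f_i(x, \cdot)$ may vanish, causing degree drops and making a uniform sign-variation argument delicate. The cleanest way around this is to partition $\RR^n$ in advance according to the signs of a finite, explicitly computable family of polynomials derived from the $f_i$'s --- the philosophy behind the CAD construction alluded to in the introduction --- so that on each cell the univariate problem in $y$ has constant structure and its solvability can be read off combinatorially. Patching the resulting cells together yields the desired semialgebraic description of $\pi(S),$ and unwinding the reduction completes the proof.
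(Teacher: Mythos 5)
The paper does not prove this statement at all: it is quoted as a classical theorem with pointers to \cite{Benedetti1990,Bochnak1998}, so there is no in-paper argument to compare yours against. Your sketch is the standard route found in those references: replace $F(A)$ by the projection of the graph, reduce by induction to eliminating a single real variable $y$ from a basic semialgebraic set, and then stratify the parameter space $\RR^n$ by the signs of finitely many auxiliary polynomials (subresultant coefficients, Sturm-sequence data) so that on each stratum the solvability in $y$ of the given sign conditions is a fixed combinatorial fact. All the reduction steps are correct, and you have correctly located the entire difficulty in the one-variable elimination. The only caveat is that this last step is described as a plan rather than executed: you would still need to exhibit the finite family of auxiliary polynomials and prove that the number of roots of the $f_i(x,\cdot)$ and the signs of the $f_i$ between consecutive roots are constant on each sign stratum (this is the content of Thom's lemma or of the sign-determination/Sturm--Tarski machinery). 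As a proof sketch of a classical theorem this is acceptable and matches the textbook treatment; as a self-contained proof it is incomplete precisely at its core.
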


By the Tarski--Seidenberg Theorem, it is not hard to see that the
closure and the interior of a semialgebraic set are semialgebraic
sets.

Recall the Curve Selection Lemma which will be used in this paper (see, for example, \cite{Ha2017,Milnor1968}).

\begin{lemma}[Curve Selection Lemma]\label{CurveSelectionLemma}
Let $A$ be a semialgebraic subset of $\mathbb{R}^n,$ and $u^* \in
\overline{A}\setminus A.$ Then there exists a real analytic
semialgebraic curve
$$\phi \colon (-\epsilon, \epsilon) \to {\mathbb R}^n$$
with $\phi(0) = u^*$ and with $\phi(t) \in A$ for $t \in (0, \epsilon).$
\end{lemma}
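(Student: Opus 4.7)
The plan is to prove the lemma by induction on the ambient dimension $n$, combining a cell decomposition of $A$ (a consequence of the Tarski--Seidenberg theorem just recalled) with Puiseux's theorem for real algebraic functions. After translating, I may assume $u^* = 0$. For $n=1$, a semialgebraic subset of $\mathbb{R}$ is a finite disjoint union of points and open intervals, so $0 \in \overline{A}\setminus A$ forces $0$ to be an endpoint of some open interval $I\subset A$; then $\phi(t) = \sigma t$ for a suitable sign $\sigma\in\{\pm 1\}$ is the required real analytic semialgebraic curve on a small symmetric neighbourhood of $0$.

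For the inductive step $n\geq 2$, I would apply the cylindrical algebraic decomposition of $\mathbb{R}^n$ compatible with $A$, which partitions $A$ into finitely many semialgebraic cells cylindrical with respect to the projection $\pi\colon \mathbb{R}^n\to\mathbb{R}^{n-1}$ onto the first $n-1$ coordinates. Since $0\in \overline{A}$, some cell $C\subseteq A$ must satisfy $0\in\overline{C}$. Its projection $C' := \pi(C)$ is semialgebraic by Tarski--Seidenberg, and setting $0':=\pi(0)$ we have $0'\in \overline{C'}$. If $0'\in C'$ the conclusion is immediate from the cell structure; otherwise the inductive hypothesis produces a real analytic semialgebraic curve $\psi\colon(-\delta,\delta)\to\mathbb{R}^{n-1}$ with $\psi(0)=0'$ and $\psi(t)\in C'$ for $t\in(0,\delta)$.

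Next I lift $\psi$ to an analytic semialgebraic curve $\phi(t) = (\psi(t),\xi(t))$ with $\phi(t)\in C$ and $\xi(0)=0$. The cell $C$ sitting above $C'$ is described either as the graph of a continuous semialgebraic function (a selected real root of some polynomial $P(x_1,\ldots,x_{n-1},y)\in\RR[x_1,\ldots,x_{n-1},y]$) or as the region strictly between two such graphs. Applying Puiseux's theorem to the univariate-in-$y$ polynomial $P(\psi(t),y)$, whose coefficients are real analytic semialgebraic functions of $t$, yields real analytic semialgebraic branches $y=\xi(t)$ on $(0,\epsilon)$; continuity of roots together with $0\in\overline{C}$ selects the branch (or, in the ``between'' case, the average of two branches) having $\xi(t)\to 0$ as $t\to 0^+$.

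The main obstacle will be this lifting step: I must (i) justify that Puiseux-type expansions apply to the algebraic functions arising in a CAD cell after restriction to a real analytic semialgebraic curve in the base, (ii) choose the correct branch so that $\xi(t)\to 0$ and $\phi(t)\in C$ for all small $t>0$, and (iii) arrange analytic continuation through $t=0$ so that $\phi(0)=u^*$. Each piece reduces to standard facts about real algebraic and semialgebraic functions, but the bookkeeping across the CAD strata is where the real care is required.
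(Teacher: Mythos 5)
The paper does not actually prove this lemma --- it records it as a known result and cites \cite{Ha2017,Milnor1968} --- so your CAD-plus-Puiseux induction is a self-contained attempt rather than a variant of anything in the text. Your base case and the sub-case $0'\in C'$ are fine, but there is a genuine gap in the lifting step. The inductive hypothesis, applied to $C'=\pi(C)$ and $0'=\pi(0)$, returns \emph{some} analytic semialgebraic curve $\psi$ with $\psi(t)\in C'$ and $\psi(t)\to 0'$; it gives you no control over the direction along which $\psi$ approaches $0'$. When $C$ is the graph of a continuous semialgebraic function $\xi_C\colon C'\to\RR$, the closure of that graph over the boundary point $0'$ can contain a whole segment $\{0'\}\times[a,b]$, so the limit of $\xi_C(\psi(t))$ depends on which curve the induction happened to produce. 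Concretely, take $n=3$, $u^*=0$, and $A=C=\{(x_1,x_2,x_2/x_1)\mid 0<x_2<x_1<1\}$, the graph of $\xi_C(x_1,x_2)=x_2/x_1$ over the triangle $C'=\{0<x_2<x_1<1\}$. Then $0\in\overline{A}\setminus A$ (approach along $(t,t^2,t)$), but if the induction hands you the perfectly valid curve $\psi(t)=(t,t/2)$, the lift is $\phi(t)=(t,t/2,1/2)\to(0,0,1/2)\neq u^*$. No choice of Puiseux branch of $P(\psi(t),y)$ repairs this: the failure is in the base curve, not in the fiber. The ``average of two branches'' in the band case has the same defect (and is undefined when a bounding function is $\pm\infty$).

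To close the gap you must force the curve to approach $u^*$ itself, not merely force its projection to approach $\pi(u^*)$. The standard device is to parametrize by the distance to $u^*$: the set $T=\{t>0\mid \exists x\in A,\ \|x-u^*\|=t\}$ is semialgebraic by Tarski--Seidenberg and accumulates at $0$, hence contains an interval $(0,\delta)$; a definable-choice (semialgebraic selection) argument --- which is proved by essentially the CAD induction you set up, but for selections rather than curves, where the convergence issue does not arise --- yields a semialgebraic $\gamma\colon(0,\delta)\to A$ with $\|\gamma(t)-u^*\|=t$, so $\gamma(t)\to u^*$ automatically. The Monotonicity Lemma and the Puiseux expansion of each coordinate, after the substitution $t=s^q$, then give real analyticity through $0$; note that this reparametrization is also needed in your argument, since $\xi_C\circ\psi$ is a priori only a Puiseux series in $t$ for $t>0$, and you nowhere perform it. Alternatively one can salvage a projection-based induction by first making a generic linear change of coordinates so that $\pi$ restricted to $\overline{A}$ near $u^*$ is finite (hence the fiber over $0'$ meets $\overline{C}$ in finitely many points and branches can be separated), but that requires a separate justification; as written, your induction does not go through.
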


In what follows, we will need the following useful results (see, for example, \cite{Dries1996}).

\begin{lemma}[Monotonicity Lemma] \label{MonotonicityLemma}
Let $a < b$ in $\mathbb{R}.$ If $f \colon [a, b] \rightarrow
\mathbb{R}$ is a semialgebraic function, then there is a partition $a
=: t_1 < \cdots < t_{N} := b$ of $[a, b]$ such that $f|_{(t_l, t_{l +
1})}$ is $C^1,$ and either constant or strictly monotone$,$ for $l \in
\{1, \ldots, N - 1\}.$
\end{lemma}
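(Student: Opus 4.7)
The plan is to establish the conclusion in two stages: first, to prove that $f$ is $C^1$ off a finite subset of $[a,b]$, and second, to refine a partition on each $C^1$ piece according to the sign of $f'$. Both stages rest on two facts: every semialgebraic subset of $\mathbb{R}$ is a finite disjoint union of isolated points and open intervals (a standard consequence of the Tarski--Seidenberg Theorem~\ref{TarskiSeidenbergTheorem}), and the Curve Selection Lemma~\ref{CurveSelectionLemma} provides real analytic semialgebraic curves into any semialgebraic set through any of its boundary points.

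For the first stage, let $B \subseteq [a,b]$ denote the set of points at which $f$ fails to be $C^1$. By Tarski--Seidenberg, $B$ is semialgebraic, being defined by a first-order formula involving polynomial inequalities and the graph of $f$. Hence $B$ is a finite union of points and open intervals, and it suffices to rule out that $B$ contains any open subinterval $(\alpha, \beta)$. I would proceed in two substeps. First, generic continuity: if $f$ were everywhere discontinuous on $(\alpha, \beta)$, then applying the Curve Selection Lemma at a suitable boundary point of the graph of $f|_{(\alpha, \beta)}$ would produce an analytic branch forcing $f$ to be continuous at some interior point of $(\alpha, \beta)$, a contradiction. Second, by applying the same style of argument to the semialgebraic difference-quotient function $(x,h) \mapsto (f(x+h) - f(x))/h$, one extracts a point of $(\alpha, \beta)$ at which both one-sided derivatives exist, are finite, and agree, contradicting $(\alpha, \beta) \subseteq B$. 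Thus $B = \{x^1, \dots, x^k\}$ is finite.

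For the second stage, on each open subinterval $I$ of $[a,b] \setminus B$, the function $f'$ is $C^0$ and semialgebraic (again by Tarski--Seidenberg), so the three sets $\{x \in I : f'(x) > 0\}$, $\{x \in I : f'(x) < 0\}$, and $\{x \in I : f'(x) = 0\}$ are semialgebraic subsets of $\mathbb{R}$, each a finite union of points and open intervals. Refining the partition by adjoining their boundary points yields a final partition $a = t_1 < \cdots < t_N = b$ such that on each $(t_l, t_{l+1})$ the derivative $f'$ is either identically zero, in which case $f$ is constant, or has constant nonzero sign, in which case $f$ is strictly monotone by the mean value theorem.

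The main obstacle is the upgrade, within the first stage, from generic continuity to generic $C^1$-smoothness. While the Curve Selection Lemma readily produces points of continuity in any semialgebraic graph, extracting a point at which both one-sided derivatives exist in $\mathbb{R}$ and match up requires a careful semialgebraic analysis of the boundary behavior of the difference quotient as $h \to 0^\pm$, typically carried out by isolating an analytic curve along which the quotient admits a finite limit and then transferring this information to the base point in $(\alpha, \beta)$.
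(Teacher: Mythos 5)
The paper offers no proof of this lemma at all: it is quoted as a standard fact with a pointer to \cite{Dries1996}, so there is no in-paper argument to compare against, and any self-contained proof must be judged on its own. Your two-stage outline (make $f$ piecewise $C^1$ off a finite set, then refine by the sign of $f'$) is the standard strategy, and your second stage is sound: the three sign sets of $f'$ are semialgebraic subsets of $\mathbb{R}$, hence finite unions of points and open intervals, and the mean value theorem finishes the argument on each refined subinterval.

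The first stage, however, has a genuine gap exactly where you flag the ``main obstacle,'' and it is not just a matter of bookkeeping. First, you define $B$ as the locus where $f$ fails to be $C^1$, but your contradiction only produces a point of $(\alpha,\beta)$ at which $f$ is \emph{differentiable}; differentiability at one point does not place that point outside $B$, since leaving $B$ requires $f$ to be differentiable on a neighborhood with $f'$ continuous there. The standard repair is a bootstrap: first show the non-differentiability locus contains no interval, conclude that $f'$ exists off a finite set and is itself semialgebraic, then rerun your generic-continuity argument on $f'$. Second, the mechanism for producing a point where both one-sided derivatives exist, are finite, and agree is not supplied, and the most natural route --- that for fixed $x$ the semialgebraic function $h\mapsto (f(x+h)-f(x))/h$ has one-sided limits in $\mathbb{R}\cup\{\pm\infty\}$ as $h\to 0^{\pm}$ --- is normally itself deduced from the Monotonicity Lemma, so you risk circularity. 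A non-circular completion is available from tools already quoted in the paper: apply the Cell Decomposition Theorem~\ref{CellTheorem} in its cylindrical form to the graph of $f$ in $\mathbb{R}^2$, so that over each open cell of the base the graph is the graph of a $C^1$ function and only finitely many base points are excluded; alternatively, your curve-selection idea can be salvaged by first compactifying the range and using Proposition~\ref{DimensionProposition}(ii) to see that $\overline{G}\setminus G$ is finite for the graph $G$ of $f|_{(\alpha,\beta)}$, which supplies both the boundary points needed for Lemma~\ref{CurveSelectionLemma} and the finiteness of the bad set. As written, though, the decisive step is asserted rather than proved.
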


The next theorem (see \cite{Bochnak1998,Dries1996}) uses the concept
of a cell whose definition we omit. We do not need the specific
structure of cells described in the formal definition. For us, it will
be sufficient to think of a $C^p$-cell of dimension $r$ as of an
$r$-dimensional $C^p$-manifold, which is the image of the cube $(0,
1)^r$ under a semialgebraic $C^p$-diffeomorphism. As follows from the
definition, an $n$-dimensional cell in $\mathbb{R}^n$ is an open set.

\begin{theorem}[Cell Decomposition Theorem] \label{CellTheorem}
Let $A \subset \mathbb{R}^n$ be a  semialgebraic set. Then$,$ for any
$p \in \mathbb{N},$ $A$ can be represented as a disjoint union of a
finite number of cells of class~$C^p.$
\end{theorem}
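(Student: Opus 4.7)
The plan is to proceed by induction on the ambient dimension $n,$ following the classical cylindrical algebraic decomposition strategy. The base case $n=1$ is immediate: any semialgebraic subset of $\mathbb{R}$ is a finite disjoint union of points and open intervals, which are $C^p$-cells of dimension $0$ and $1$ respectively.

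For the inductive step, first express $A$ as a finite Boolean combination of sign conditions on polynomials $P_1, \ldots, P_k \in \mathbb{R}[x_1, \ldots, x_n].$ Treat each $P_i$ as a polynomial in $x_n$ with coefficients in $\mathbb{R}[x_1, \ldots, x_{n-1}],$ and collect an auxiliary finite family $\{Q_j\} \subset \mathbb{R}[x_1, \ldots, x_{n-1}]$ consisting of the leading coefficients, the discriminants with respect to $x_n,$ and the pairwise subresultants of the $P_i$'s. By the inductive hypothesis applied to the semialgebraic set $\mathbb{R}^{n-1},$ there is a decomposition of $\mathbb{R}^{n-1}$ into $C^p$-cells $C_1, \ldots, C_N$ on each of which every $Q_j$ has constant sign (strictly positive, strictly negative, or identically zero).

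Fix such a cell $C_\ell.$ The key algebraic fact is that on $C_\ell$ the real roots of each $P_i(x',\cdot)$ in $x_n$ are non-colliding and constant in number, and depend semialgebraically on $x'\in C_\ell;$ non-vanishing of the discriminants together with the implicit function theorem upgrades continuity to $C^p$-smoothness. Labelling these roots as $\xi_{\ell,1}(x')<\cdots<\xi_{\ell,r_\ell}(x'),$ the cells of $\mathbb{R}^n$ lying over $C_\ell$ are defined to be the graphs $\{(x',\xi_{\ell,j}(x')):x'\in C_\ell\}$ together with the open bands between consecutive graphs (plus the two unbounded pieces above the highest and below the lowest root). Each such piece is $C^p$-diffeomorphic to $C_\ell$ or to $C_\ell\times(0,1),$ hence to an open cube, and is therefore a $C^p$-cell in the sense of the statement. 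Because every $P_i$ has constant sign on each piece, the original set $A$ is the disjoint union of the appropriate sub-collection of these cells.

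The main obstacle is the algebraic projection step: choosing $\{Q_j\}$ rich enough to guarantee that on each $C_\ell$ the configuration of real roots of the $P_i(x',\cdot)$ is combinatorially stable. The standard remedy uses subresultants (to control how many roots persist and when two roots of different $P_i$'s coincide) together with discriminants (to prevent multiple roots within a single $P_i$), and ultimately rests on classical real-algebraic root-counting theorems of Sturm--Tarski type. Once this algebraic package is in place, the geometric construction over each $C_\ell$ and the $C^p$-smoothness of the root functions $\xi_{\ell,j}$ follow in a routine manner from the implicit function theorem, and the cell axioms are verified by the explicit diffeomorphisms above.
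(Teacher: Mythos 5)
The paper does not prove this statement: it is quoted as a standard background result with citations to Bochnak--Coste--Roy and van den Dries--Miller, so there is no in-paper argument to compare against. What you have written is essentially the classical cylindrical algebraic decomposition proof that appears in those references, so your route coincides with the one the authors are implicitly invoking.

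As a sketch your argument is sound, but two points need care before it is a complete proof. First, the induction as you set it up does not quite close: the statement being proved decomposes a \emph{single} semialgebraic set $A$, whereas in the inductive step you need a decomposition of $\mathbb{R}^{n-1}$ into $C^p$-cells on each of which \emph{every} $Q_j$ has a constant sign. That requires the stronger inductive statement that, given finitely many polynomials (equivalently, finitely many semialgebraic sets), there is a single cell decomposition of $\mathbb{R}^{n-1}$ adapted to all of them simultaneously; you should formulate and prove that version by induction instead. Second, your projection family $\{Q_j\}$ (leading coefficients, discriminants, pairwise subresultants) is not rich enough as stated: on a cell $C_\ell$ where a leading coefficient vanishes identically, the relevant polynomial degenerates to its truncation, and delineability of the roots then requires the discriminants and subresultant coefficients of the \emph{truncations} as well; the projection operator must be closed under truncation (and one needs all principal subresultant coefficients, not only the top one, to keep the number of distinct complex roots constant). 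You correctly identify delineability as the crux and the Sturm--Tarski root-counting machinery as the tool, so with these two repairs the argument goes through; the remaining steps (semialgebraicity and $C^p$-smoothness of the root functions via the implicit function theorem, and the graphs and bands being $C^p$-cells) are routine as you say.
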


By Cell Decomposition Theorem, for any $p \in \mathbb{N}$ and any
nonempty semialgebraic subset $A$ of $\mathbb{R}^n,$ we can write $A$
as a disjoint union of finitely many semialgebraic $C^p$-manifolds of
different dimensions. The {\em dimension} $\dim A$ of a nonempty
semialgebraic set $A$ can thus be defined as the dimension of the
manifold of highest dimension of its decomposition. This dimension is
well defined and independent of the decomposition of $A.$ By
convention, the dimension of the empty set is taken to be negative
infinity. We will need the following result (see
\cite{Bochnak1998,Dries1996}).

\begin{proposition} \label{DimensionProposition}
\begin{enumerate}
\item [{\rm (i)}] Let $A \subset \mathbb{R}^n$ be a semialgebraic set and
$f \colon A \to\mathbb{R}^m$ a semialgebraic map. Then$,$ $\dim f(A)\leq\dim  A.$

\item [{\rm (ii)}] Let $A \subset \mathbb{R}^n$ be a nonempty
	semialgebraic set. Then$,$ $\dim(\bar{A} \setminus A) < \dim A.$
	In particular$,$ $\dim(\bar{A})=\dim A.$

\item [{\rm (iii)}] Let $A, B \subset \mathbb{R}^n$ be semialgebraic sets. Then$,$
 $$\dim (A \cup B) = \max\{ \dim A, \dim B\}.$$
\end{enumerate}
\end{proposition}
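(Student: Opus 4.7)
The plan is to base the proof on the Cell Decomposition Theorem (Theorem~\ref{CellTheorem}), which is the natural tool since the dimension of a semialgebraic set is defined through such decompositions. I would handle the three parts in the order (iii), (i), (ii), since (iii) is essentially definitional and (i) can be reduced to a statement about single cells, while (ii) requires the most structural information about cells.

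For part (iii), I would take a common cell decomposition of $\mathbb{R}^n$ (of class $C^1$, say) that is compatible with both $A$ and $B$, meaning each cell is either contained in, or disjoint from, each of the two sets. Then $A \cup B$ is a disjoint union of some of these cells, and its dimension equals the largest dimension of a cell in that union, which is clearly $\max\{\dim A, \dim B\}$. For part (i), I would apply cell decomposition to $A$ (refined to be compatible with the graph of $f$ in $A \times \mathbb{R}^m$) to write $A = \bigsqcup_{i=1}^N C_i$ where each $C_i$ is a $C^1$-cell of dimension $d_i \le \dim A$ and $f|_{C_i}$ is of class $C^1$. By Tarski--Seidenberg (Theorem~\ref{TarskiSeidenbergTheorem}), each $f(C_i)$ is semialgebraic, and since $C_i$ is the diffeomorphic image of $(0,1)^{d_i}$, the image $f(C_i)$ is covered by a $C^1$ map from a $d_i$-dimensional manifold, so $\dim f(C_i) \le d_i$ by the standard rank argument. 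Combining with part (iii) applied finitely many times, $\dim f(A) = \max_i \dim f(C_i) \le \max_i d_i = \dim A$.

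For part (ii), which is the main obstacle, I would take a $C^1$ cell decomposition of $\mathbb{R}^n$ compatible with $A$, so that every cell is either entirely inside $A$ or entirely inside $\mathbb{R}^n \setminus A$. The crucial property I will invoke is the \emph{frontier condition} for cell decompositions: the frontier $\overline{C} \setminus C$ of any cell $C$ is a finite union of cells of strictly smaller dimension in the decomposition. Writing $A = \bigsqcup_{\alpha} C_\alpha$ as a disjoint union of cells of $A$, one gets $\overline{A} = \bigcup_\alpha \overline{C_\alpha}$, and hence
\[
\overline{A} \setminus A \;\subset\; \bigcup_\alpha \bigl(\overline{C_\alpha} \setminus C_\alpha\bigr).
\]
Each set on the right is a union of cells of dimension strictly less than $\dim C_\alpha \le \dim A$, and by part (iii) we conclude $\dim(\overline{A} \setminus A) < \dim A$. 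Finally, from $\overline{A} = A \cup (\overline{A} \setminus A)$ and part (iii) one obtains $\dim(\overline{A}) = \dim A$.

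The hard step is undoubtedly the frontier condition in the cell decomposition argument for (ii); the Cell Decomposition Theorem as stated in the excerpt does not explicitly spell it out, so I would either invoke a stronger version from~\cite{Bochnak1998,Dries1996} that produces a \emph{stratifying} decomposition, or give an independent argument by contradiction: if $\dim(\overline{A} \setminus A) = \dim A = d$, then $\overline{A} \setminus A$ contains a semialgebraic $C^1$-manifold of dimension $d$, and the Curve Selection Lemma (Lemma~\ref{CurveSelectionLemma}) applied to a point of this manifold (which lies in $\overline{A} \setminus A$) together with a local parametrization would produce nearby points of $A$ inside a $d$-dimensional submanifold transverse to $\overline{A} \setminus A$, contradicting the local structure of $A$ near its boundary. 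Working out this contradiction cleanly is the only step that goes beyond routine bookkeeping with cells.
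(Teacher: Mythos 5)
The paper gives no proof of this proposition at all: it is quoted as a standard fact with a pointer to \cite{Bochnak1998,Dries1996}, so there is no ``paper's route'' to compare against beyond that citation. Your argument is essentially the standard textbook proof from those references, and it is sound in outline: (iii) follows from a common cell decomposition compatible with both sets together with the well-definedness of dimension; (i) follows from decomposing $A$ so that $f$ is $C^1$ on each cell and using that a $C^1$ image of a $d$-dimensional cell has dimension at most $d$ (in \cite{Bochnak1998} this is packaged as ``projections do not increase dimension,'' applied to the graph of $f$); and (ii) reduces, exactly as you say, to the frontier inequality $\dim(\overline{C}\setminus C)<\dim C$ for a single cell. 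You are right that the Cell Decomposition Theorem as stated in the paper does not supply that inequality, and invoking the stratified version of the decomposition (or \cite[Prop.~2.8.13]{Bochnak1998}, which proves it by induction on $n$ using the cylindrical structure) is the correct fix. The one genuinely weak spot is your fallback ``independent argument by contradiction'' via the Curve Selection Lemma: as sketched it does not close. Curve selection only produces an analytic arc in $A$ limiting to a point of $\overline{A}\setminus A$, and no contradiction with $\dim(\overline{A}\setminus A)=\dim A$ follows from that alone; the actual proof needs the inductive cylindrical argument. Since you present that fallback only as an alternative to the (legitimate) citation of the frontier condition, the proposal as a whole stands, but you should not rely on the curve-selection variant.
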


Combining Theorems~2.4.4,~2.4.5 and Proposition~2.5.13 in \cite{Bochnak1998}, it follows that
\begin{proposition}\label{prop1}
Let $A$ be a semialgebraic set of $\Bbb R^n.$ The following statements hold.
\begin{enumerate}
    \item [{\rm (i)}] $A$ has a finite number of connected components which are closed in $A.$
    \item [{\rm (ii)}] $A$ is connected if and only if it is path connected.
\end{enumerate}
\end{proposition}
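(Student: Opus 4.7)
The strategy is to reduce both parts to the Cell Decomposition Theorem~\ref{CellTheorem}. Fix once and for all a decomposition $A = C_1\sqcup\cdots\sqcup C_N$ into finitely many disjoint semialgebraic $C^p$-cells. By the description of cells recalled just above the statement, each $C_i$ is the image of an open cube under a semialgebraic $C^p$-diffeomorphism, so each $C_i$ is path-connected, and in particular connected.

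For part~(i), every connected component $K$ of $A$ is a union of cells: if $K\cap C_i\neq\emptyset$, then by maximality of $K$ the connected set $K\cup C_i$ still lies in $K$, so $C_i\subset K$. Thus each component is determined by a subset of $\{C_1,\ldots,C_N\}$, so there are at most $2^N$ of them, in particular finitely many. Closedness of each component in $A$ is the standard topological fact that connected components are always closed (the closure of a connected set is connected); in the present finite-component setting it is also immediate, since the complement of one component in $A$ is the union of the remaining finitely many (closed) components.

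For part~(ii), the implication ``path-connected $\Rightarrow$ connected'' is automatic. For the converse I would introduce an equivalence relation on $\{1,\ldots,N\}$ by declaring $i\sim j$ whenever some (equivalently any) $p_i\in C_i$ and $p_j\in C_j$ are joined by a continuous path in $A$; within a single path-connected cell this is trivially reflexive and extends by concatenation of paths. The union $U_\alpha$ of the cells in each equivalence class is then path-connected, and the various $U_\alpha$ partition $A$. The plan is to show each $U_\alpha$ is open in $A$; since $A\setminus U_\alpha$ is a union of the other open $U_\beta$'s, each $U_\alpha$ is clopen, and connectedness of $A$ forces a single class, giving path-connectedness of $A$. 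To prove openness at $y\in U_\alpha\cap C_i$, pick $\delta>0$ small enough that the only cells $C_j$ meeting $\mathbb{B}_\delta(y)$ are those with $y\in\overline{C_j}$ (such a $\delta$ exists because cells with $y\notin\overline{C_j}$ are at positive distance from $y$). For such a $C_j$ with $j\neq i$, disjointness of cells gives $y\in\overline{C_j}\setminus C_j$, and the Curve Selection Lemma~\ref{CurveSelectionLemma} applied to the semialgebraic set $C_j$ yields a real-analytic semialgebraic arc from $y$ into $C_j$; concatenating it with the path-connectedness of $C_j$ shows $j\sim i$, whence $\mathbb{B}_\delta(y)\cap A\subset U_\alpha$.

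The principal obstacle is the openness step, which is the only place where the semialgebraic hypothesis is essential: without the Curve Selection Lemma there is no a~priori way to bridge from a cell $C_i$ to an adjacent cell $C_j$ (having $y\in C_i$ in its closure) by an actual path inside $A$. The remainder of the argument is combinatorics of cells and elementary point-set topology.
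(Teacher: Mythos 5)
Your argument is correct. The paper does not actually prove this proposition; it simply cites Theorems~2.4.4, 2.4.5 and Proposition~2.5.13 of Bochnak--Coste--Roy, whose own proofs run by induction on the cylindrical structure of the decomposition. What you do instead is assemble the statement from exactly the two black boxes the paper has already recorded, the Cell Decomposition Theorem~\ref{CellTheorem} and the Curve Selection Lemma~\ref{CurveSelectionLemma}: part~(i) is pure combinatorics of finitely many connected cells, and for part~(ii) the Curve Selection Lemma is the right tool to bridge from a cell $C_i$ to an adjacent cell $C_j$ with $y\in\overline{C_j}\setminus C_j$, which is the only genuinely semialgebraic input (indeed, a disjoint union of connected manifolds need not be locally connected in general, so some such adjacency lemma is unavoidable). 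Two cosmetic points you may wish to tighten: the claim that each class $U_\alpha$ is path-connected deserves the one-line observation that every cell met by a witnessing path is itself equivalent to $C_i$, so the path stays in $U_\alpha$ (although for the clopen argument it suffices that a single equivalence class forces $A$ itself to be path-connected); and you should note explicitly that each cell is semialgebraic (being the image of a cube under a semialgebraic map, via Theorem~\ref{TarskiSeidenbergTheorem}), so that the Curve Selection Lemma really applies to $C_j$. Neither affects correctness.
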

Hence, in the rest of this paper, by saying that a semialgebraic subset of $\RR^n$
is connected, we also mean that it is path connected.

Next we state a semialgebraic version of Sard's theorem with the
parameter  in a simplified form which is sufficient for the
applications studied here. Given a differentiable map between
manifolds $f \colon X \rightarrow Y,$ a point $y \in Y$ is called a
{\em regular value} \index{regular value} for $f$ if either $f^{-1}(y)
= \emptyset$ or the derivative map $Df(x) \colon T_xX \rightarrow
T_yY$ is surjective at every point $x \in f^{-1}(y),$ where $T_x X$
and $T_yY$ denote the tangent spaces of $X$ at $x$ and of $Y$ at $y,$
respectively. A point $y \in Y$ that is not a regular value of $f$ is
called a {\em critical value.} The following result is also called Thom's weak transversality theorem.

\begin{theorem}[Sard's theorem with parameter] \label{SardTheorem}
Let $f \colon X \times \mathscr{P} \rightarrow Y$ be a differentiable
semialgebraic map between semialgebraic submanifolds.
If $y \in Y$ is a regular value of $f,$ then there exists a
semialgebraic set $\Sigma \subset \mathscr{P}$
of dimension smaller than the dimension of $\mathscr{P}$ such that$,$ for every $p
\in \mathscr{P} \setminus \Sigma,$ $y$ is a regular value of the map
$f_p \colon X \rightarrow Y, x \mapsto f(x, p).$
\end{theorem}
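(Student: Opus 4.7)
The plan is to reduce this to the semialgebraic version of the classical Sard theorem applied to a suitable projection map, following the standard differential-topology argument due to Thom.

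First, I would let $M := f^{-1}(y) \subset X \times \mathscr{P}$. Since $y$ is a regular value of $f$, either $M = \emptyset$ (in which case $\Sigma = \emptyset$ works trivially, as then $f_p^{-1}(y) = \emptyset$ for all $p$) or the preimage theorem guarantees that $M$ is a semialgebraic $C^1$-submanifold of $X \times \mathscr{P}$ of dimension $\dim X + \dim \mathscr{P} - \dim Y$, with tangent space $T_{(x,p)}M = \ker Df(x,p)$ at each of its points. Semialgebraicity of $M$ follows from Tarski--Seidenberg (Theorem~\ref{TarskiSeidenbergTheorem}).

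Next, I would consider the semialgebraic differentiable map
\[
\pi \colon M \to \mathscr{P}, \qquad (x,p) \mapsto p,
\]
and define $\Sigma \subset \mathscr{P}$ to be the set of critical values of $\pi$. By the semialgebraic Sard theorem (the parameter-free version, which can be deduced from Cell Decomposition, Proposition~\ref{DimensionProposition}(i), and the Monotonicity Lemma by stratifying $M$ into cells on which $\pi$ has constant rank and noting that the image of the non-full-rank cells has dimension strictly less than $\dim \mathscr{P}$), $\Sigma$ is a semialgebraic subset of $\mathscr{P}$ with $\dim \Sigma < \dim \mathscr{P}$.

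The remaining step, and the one that requires care, is a linear-algebra lemma showing that $\mathscr{P} \setminus \Sigma$ is exactly the set of parameters for which $y$ is regular for $f_p$. Fix $p \notin \Sigma$ and $x \in f_p^{-1}(y)$, so that $(x,p) \in M$ and $D\pi(x,p) \colon T_{(x,p)}M \to T_p\mathscr{P}$ is surjective. Given $v \in T_y Y$, regularity of $y$ for $f$ yields $(u,w) \in T_x X \times T_p \mathscr{P}$ with $Df(x,p)(u,w) = v$; surjectivity of $D\pi(x,p)$ gives some $(u_0, w) \in \ker Df(x,p)$, and then $Df_p(x)(u - u_0) = Df(x,p)(u - u_0,\, 0) = v$, so $Df_p(x)$ is surjective. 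Hence $y$ is a regular value of $f_p$ for every $p \in \mathscr{P} \setminus \Sigma$.

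The only real obstacle is the semialgebraic Sard step itself: unlike the smooth case, one cannot invoke measure-zero reasoning directly, and one must instead stratify $\pi$ into cells of constant rank and use Proposition~\ref{DimensionProposition}(i) to conclude that the critical image is semialgebraic of dimension $< \dim\mathscr{P}$. Everything else is a direct tangent-space manipulation.
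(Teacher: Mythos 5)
Your proof is correct. The paper itself gives no argument for this theorem---it only refers the reader to Guillemin--Pollack and to H\`a--Ph\d{a}m---and the argument you give (pass to $M=f^{-1}(y)$, project to $\mathscr{P}$, take $\Sigma$ to be the critical values of the projection, and use the standard kernel/surjectivity computation to identify regular values of $\pi|_M$ with parameters $p$ for which $y$ is regular for $f_p$) is precisely the classical Thom transversality proof that those references use. The only ingredient you treat somewhat sketchily, the parameter-free semialgebraic Sard theorem, is exactly \cite[Theorem~9.6.2]{Bochnak1998}, which the paper already invokes elsewhere, so it is legitimate to cite it rather than re-derive it; your constant-rank stratification sketch is the right idea if one insists on a self-contained argument.
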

\begin{proof}
For a proof, we refer the reader to~\cite{DT} or \cite[Theorem~1.10]{Ha2017}.
\end{proof}

\subsection{Algebraic geometry}
A subset $I\subseteq\RR[x]$ is said an ideal if $0\in I$, $I+I\subseteq I$ and
$p\cdot q\in I$ for all $p\in I$ and $q\in\RR[x]$.
For $g_1,\ldots,g_s\in\RR[x]$,
denote $\langle g_1,\ldots,g_s\rangle$ as the ideal in $\RR[x]$ generated by $g_i$'s,
i.e., the set $g_1\RR[x]+\cdots+g_s\RR[x]$.
An ideal is radical if $f^m\in I$ for some integer $m\ge 1$ implies that $f\in I$.
The radical of an ideal $I\subseteq\RR[x]$, denoted $\sqrt{I}$, is the set
$\{f\in\RR[x]\mid f^m\in I\text{ for some integer }m\ge 1\}$.
An affine variety (resp., real affine variety) is a subset of $\CC^n$
(resp., $\RR^n$) that consists of common
zeros of a set of polynomials.
For an ideal $I\subseteq\RR[x]$, denote $\bfV_\CC(I)$ and $\bfV_\RR(I)$ as the affine
varieties defined by $I$ in $\CC^n$ and $\RR^n$, respectively.
For a polynomial $g\in\RR[x]$,
respectively replace $\bfV_\CC(\langle g\rangle)$ and $\bfV_\RR(\langle g\rangle)$ by
$\bfV_\CC(g)$ and $\bfV_\RR(g)$ for simplicity.
Given a set
$V\subseteq\CC^n$, denote $\mathbf{I}(V)\subseteq\RR[x]$ as the vanishing ideal of
$V$ in $\RR[x]$, i.e., the set of all polynomials in $\RR[x]$ which equal zero at
every point in $V$.
For an ideal $I\subseteq\RR[x]$, denote $\dim(I)$ as the Hilbert
dimension of $I$, i.e., the degree of the affine Hilbert polynomial of $I$.
For an ideal $I\subseteq\RR[x]$,
the decomposition $I=I_1\cap\cdots\cap I_s$ is said the equidimensional decomposition
of $I$ if each ideal $I_i$ is pure dimensional, i.e., all its associated
primes have the same dimension.
For an affine variety $V\subseteq\CC^n$,
denote $\dim(V)=\dim(\mathbf{I}(V))$ as its dimension.
When $\bfV_\CC(I)$ is finite, the ideal $I$ is said to be zero-dimensional.
For any subset $S\subseteq\CC^n$, denote $\overline{S}^\mathcal{Z}$ as the Zariski
closure of $S$ in $\CC^n$, i.e., $\overline{S}^{\mathcal{Z}}=\bfV_\CC(\mathbf{I}(S))$.

\section{KKT points and tangencies} \label{SectionTangencyVariety}

Recall the polynomials $f$, $g_i$'s, $h_j$'s $\in\RR[x]$ in \eqref{eq::P} and the basic closed semialgebraic set
$$S=\{x \in {\Bbb R}^n \ | \ g_i(x) = 0,\ i = 1, \ldots, l,\  h_j(x)
\ge 0,\ j = 1, \ldots, m \}.$$
Let $x^*\in S$ be a fixed KKT point in the rest of this paper. We also
assume that $x^*$ is not an isolated point of $S.$

\begin{definition}{\rm
\begin{enumerate}
\item[(i)] The point $x^*$ is said to be a {\em local minimizer} of
	$f$ on $S$ if there is an open neighborhood $U$ of $x^*$ such
	that
\begin{align*}
f(x^*) \le f(x) \quad \textrm{ for all } \ x \in S \cap U.
\end{align*}
\item[(ii)] The point $x^*$ is said to be a {\em local maximizer} of
	$f$ on $S$ if there is an open neighborhood $U$ of $x^*$ such
	that
\begin{align*}
f(x^*) \ge f(x) \quad \textrm{ for all } \ x \in S \cap U.
\end{align*}
\item[(iii)] The point $x^*$ is not an {\em extremum point} of $f$ on $S$ if for any open
		neighborhood $U$ of $x^*$, there exist $u , v \in S \cap U$
		such that
\begin{align*}
f(u) < f(x^*) < f(v).
\end{align*}
\end{enumerate}
}\end{definition}

\subsection{KKT points}
As is well known,
most numerical optimization methods targeting local (including global) minimizers are
often based on the following KKT optimality conditions:
\begin{eqnarray*}
&& \nabla  f(x)  - \sum_{i = 1}^{l} \lambda_i \nabla g_i(x) - \sum_{j = 1}^{m} \nu_j \nabla h_j(x) = 0,  \\
&& g_i(x) = 0,\ i = 1, \ldots, l, \ h_j(x) \ge 0,\ j = 1, \ldots, m,\\
&& \nu_j h_j(x) = 0,\ \nu_j \ge 0,\ j = 1, \ldots, m,
\end{eqnarray*}
where the variables $\lambda_i, \nu_j \in {\Bbb R}$ are said to be
Lagrange multipliers and $\nabla f$ denotes the vector whose
components are the partial derivatives of $f.$

Sometimes the above KKT system fails to hold at some minimizers. Hence, we
usually make an assumption said a {\em constraint qualification} to
ensure that the KKT system holds. Such a constraint
qualification--probably the one most often used in the design of
algorithms--is defined as follows:

\begin{definition}{\rm
We say that the {\em linearly independent constraint qualification} ({\rm
(LICQ)} for short) holds at $x \in S$ if the system of the vectors
$\nabla g_i(x)$, $i = 1, \ldots, l$, $\nabla h_j(x)$, $j \in J(x),$ is
linearly independent, where $J(x)$ is the set of indices $j$ for which
$h_j$ vanishes at $x.$}
\end{definition}

\begin{remark}\label{rk::licq}{\rm
\begin{enumerate}
\item[(i)] Note that {\rm (LICQ)} is generally satisfied, for a proof see \cite[Theorem 6.1]{Ha2017}.

\item[(ii)] Since $x^*$ is not isolated in $S,$ we can see that if
	{\rm (LICQ)} holds at $x^*,$ then $n - l - \#J(x^*) \ge 1$ where
	$\#J(x^*)$ denotes the number of elements in $J(x^*)$ and so
	$n - l \ge 1.$
\end{enumerate}
}\end{remark}

\begin{lemma}\label{regular1}
If {\rm (LICQ)} holds at $x^*\in S,$ then there exists a real number
$R>0$ such that {\rm (LICQ)} holds at every $x\in S\cap \Bbb B_R (x^*).$
\end{lemma}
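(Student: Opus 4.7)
The plan is a straightforward continuity argument based on two separate observations: the active set cannot grow under small perturbations, and linear independence of gradients is stable under small perturbations.

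First, I would handle the active set. For every index $j \notin J(x^*)$, we have $h_j(x^*) > 0$. Since each $h_j$ is continuous (being a polynomial), there exists $R_1 > 0$ such that $h_j(x) > 0$ for all $x \in \mathbb{B}_{R_1}(x^*)$ and all $j \notin J(x^*)$. Consequently, for every $x \in S \cap \mathbb{B}_{R_1}(x^*)$ we have the inclusion $J(x) \subseteq J(x^*)$.

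Next, I would handle the rank condition. Let $M(x)$ denote the matrix whose rows are the vectors $\nabla g_i(x)$ for $i = 1,\ldots,l$ together with $\nabla h_j(x)$ for $j \in J(x^*)$. By assumption, the rows of $M(x^*)$ are linearly independent, so $M(x^*)$ has full row rank $l + \#J(x^*)$, and therefore some maximal minor $\Delta(x^*)$ of $M(x^*)$ is nonzero. Since $\Delta$ is a polynomial in $x$ (hence continuous), there is $R_2 > 0$ such that $\Delta(x) \neq 0$ for all $x \in \mathbb{B}_{R_2}(x^*)$. Thus $M(x)$ has full row rank, i.e., its rows are linearly independent, throughout $\mathbb{B}_{R_2}(x^*)$.

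Finally, setting $R := \min\{R_1, R_2\}$, I would take any $x \in S \cap \mathbb{B}_{R}(x^*)$. By the first step $J(x) \subseteq J(x^*)$, so the vectors $\nabla g_i(x)$ ($i = 1,\ldots,l$) and $\nabla h_j(x)$ ($j \in J(x)$) form a subcollection of the rows of $M(x)$. By the second step, the rows of $M(x)$ are linearly independent; any sub-collection of a linearly independent set is linearly independent, so (LICQ) holds at $x$. There is no real obstacle here; the only thing to be careful about is that the argument needs both ingredients, because on its own the continuity of the gradients only guarantees linear independence of the \emph{full} active-at-$x^*$ collection, not of the smaller collection associated with $J(x)$, and that smaller collection is handled for free by passing to a subset.
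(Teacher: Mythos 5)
Your proposal is correct and is exactly the argument the paper uses, merely spelled out in more detail: the paper's one-line proof appeals to "continuity" for both the inclusion $J(x)\subseteq J(x^*)$ and the persistence of linear independence, which are precisely your two steps (via positivity of the inactive $h_j$ and a nonvanishing maximal minor, respectively). Your closing remark that one then passes to the subcollection indexed by $J(x)$ is the same implicit step in the paper's proof.
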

\begin{proof}
Since ${\rm (LICQ)}$ holds at $x^*,$ then the system of the vectors
$\nabla g_i(x^*),$ $i = 1, \ldots, l,$ $\nabla h_j(x^*),$ $j \in
J(x^*)$ is linearly independent. By continuity, for all $x$
near to $x^*,$ $J(x)\subset J(x^*)$ and the system of the vectors
$\nabla g_i(x),$ $i = 1, \ldots, l,$ $\nabla h_j(x),$ $j \in J(x),$ is
linearly independent.
\end{proof}

The following lemma says that if {\rm (LICQ)} holds at $x^* \in S,$
then the set $S$ intersects transversally the sphere $\{ x \in \Bbb
R^n \ | \ \| x - x^* \| = R\}$ for all $R > 0$ small enough.
\begin{lemma}\label{includex}
If {\rm (LICQ)} holds at $x^* \in S,$ then there exists a real number
$R > 0$ such that the vectors $\nabla g_i(x), i = 1,
\ldots, l, \nabla h_j(x), j \in J(x),$ and $x - x^*$ are linearly independent for all $x \in S \cap \Bbb B_R (x^*) \setminus \{x^*\}.$
\end{lemma}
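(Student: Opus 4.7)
The plan is to argue by contradiction via the Curve Selection Lemma. By Lemma~\ref{regular1}, first fix a radius $R_1 > 0$ such that {\rm (LICQ)} holds at every $x \in S \cap \mathbb{B}_{R_1}(x^*)$; in particular, $J(x) \subseteq J(x^*)$ and the family of vectors $\nabla g_i(x)$ ($i = 1, \ldots, l$), $\nabla h_j(x)$ ($j \in J(x)$) is linearly independent throughout this neighbourhood. Consequently, augmenting this family by $x - x^*$ produces a linearly dependent set exactly when
\begin{equation*}
x - x^* \;\in\; \mathrm{span}\bigl\{\nabla g_i(x),\, \nabla h_j(x) \,:\, 1 \le i \le l,\ j \in J(x)\bigr\}.
\end{equation*}
If the lemma's conclusion failed, such $x \in S\setminus\{x^*\}$ would occur arbitrarily close to $x^*$. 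Stratifying by the active index set $K := J(x) \subseteq J(x^*)$ and using the finiteness of subsets of $J(x^*)$, I obtain a fixed $K$ for which the semialgebraic set
\begin{equation*}
A_K := \bigl\{ x \in S \cap \mathbb{B}_{R_1}(x^*) \setminus \{x^*\} \,:\, J(x) = K,\ x - x^* \in \mathrm{span}\{\nabla g_i(x), \nabla h_j(x) : 1\le i \le l,\ j \in K\} \bigr\}
\end{equation*}
satisfies $x^* \in \overline{A_K} \setminus A_K$.

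Next I would apply Lemma~\ref{CurveSelectionLemma} to $A_K$ to produce a real analytic semialgebraic curve $\phi \colon (-\epsilon, \epsilon) \to \RR^n$ with $\phi(0) = x^*$ and $\phi(t) \in A_K$ for $t \in (0, \epsilon)$. Along this curve, $g_i(\phi(t)) \equiv 0$ for every $i$ and $h_j(\phi(t)) \equiv 0$ for every $j \in K$, while the linear independence of the active gradients uniquely determines real coefficients $\mu_i(t)$ and $\xi_j(t)$ with
\begin{equation*}
\phi(t) - x^* \;=\; \sum_{i=1}^{l} \mu_i(t)\, \nabla g_i(\phi(t)) + \sum_{j \in K} \xi_j(t)\, \nabla h_j(\phi(t)).
\end{equation*}

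Differentiating the active equations along $\phi$ yields $\langle \nabla g_i(\phi(t)), \phi'(t)\rangle = 0$ for $i = 1, \ldots, l$ and $\langle \nabla h_j(\phi(t)), \phi'(t)\rangle = 0$ for $j \in K$. Taking the inner product of the previous identity with $\phi'(t)$ therefore gives $\langle \phi(t) - x^*, \phi'(t)\rangle = 0$, i.e.\ $\tfrac{d}{dt}\|\phi(t) - x^*\|^2 \equiv 0$ on $(0, \epsilon)$. Hence $\|\phi(t) - x^*\|$ is constant on $(0, \epsilon)$; by continuity of $\phi$ at $0$ this constant must be $0$, forcing $\phi \equiv x^*$ and contradicting $\phi(t) \in A_K \subset S \setminus \{x^*\}$. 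The main obstacle I anticipate is organizing the $J$-stratification cleanly so that Lemma~\ref{CurveSelectionLemma} applies to a genuine semialgebraic set; once the analytic curve is in hand, the rest is the short first-derivative identity above, expressing that $\phi(t) - x^*$ lies in a subspace orthogonal to $\phi'(t)$.
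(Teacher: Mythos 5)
Your proof is correct, and while it shares the paper's overall skeleton (argue by contradiction, reduce to a semialgebraic set accumulating at $x^*$, invoke the Curve Selection Lemma, and differentiate the active constraints along the resulting curve to kill the gradient terms), the way you close the argument is genuinely different and somewhat cleaner. The paper encodes the dependence relation with normalized coefficients $(\lambda,\nu,\mu)$ of unit Euclidean norm, derives $0=\tfrac{\mu(t)}{2}\tfrac{d}{dt}\|\varphi(t)\|^2$, and then needs the Monotonicity Lemma to rule out $\tfrac{d}{dt}\|\varphi(t)\|^2\equiv 0$ and force $\mu(t)=0$, after which the contradiction is with Lemma~\ref{regular1} (the remaining gradients would be dependent). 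You instead use {\rm (LICQ)} up front to rewrite the dependence as $x-x^*\in\mathrm{span}\{\nabla g_i(x),\nabla h_j(x)\}$, i.e.\ you normalize the coefficient of $x-x^*$ to be $1$; the same first-derivative computation then yields $\tfrac{d}{dt}\|\phi(t)-x^*\|^2\equiv 0$ outright, and the contradiction comes from $\|\phi(t)-x^*\|$ being constant yet tending to $0$ with $\phi(t)\neq x^*$. This buys you two things: you never need the Monotonicity Lemma or any regularity of the multiplier functions (your identity is purely pointwise in $t$), and the stratification by the active set $K$ makes the semialgebraicity of the set fed to the Curve Selection Lemma transparent (a rank condition plus sign conditions). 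The only point to make explicit in a final write-up is that the span condition in $A_K$ is expressed by the vanishing of the appropriate maximal minors, which is what makes $A_K$ semialgebraic; with that said, the argument is complete.
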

\begin{proof}
Without loss of generality, assume $x^* = 0.$
Suppose that the lemma is not true, then there exists a sequence
$\{x^k\} \subset S$ tending to $0$ such that $x^k\neq 0$ and the system of the
vectors $\nabla g_i(x^k),$ $i = 1, \ldots, l,$ $\nabla h_j(x^k),$ $j \in
J(x^k),$ and $x^k$ is linearly dependent for all $k,$ i.e., there exist
$\lambda_i^k,$ $\nu_j^k$ and $\mu^k$ $\in \Bbb R$ such that
\begin{eqnarray*}
&& \sum_{i = 1}^{l}\lambda_i^k \nabla g_i(x^k) + \sum_{j \in J(x^k)}
\nu_j^k \nabla h_j(x^k) + \mu^k x^k = 0, \ \textrm{ and }\\
&& \sum_{i = 1}^{l} (\lambda_i^k)^2 + \sum_{j \in J(x^k)}( \nu_j^k)^2
+ (\mu^k)^2 = 1.
\end{eqnarray*}
By passing to a subsequence, if necessary, we may assume that
$J(x^k)=J\subset \{1,2,\ldots, m\}$ for all $k,$ and there exist the
following limits
$$\lambda_i^* := \lim_{k\rightarrow\infty}\lambda_i^k, \quad \nu_j^* := \lim_{k\rightarrow\infty}\nu_j^k, \quad \textrm{ and } \quad \mu^* := \lim_{k\rightarrow\infty}\mu^k.$$
Let
\begin{eqnarray*}
A:=\{(x, \lambda, \nu, \mu)\in \Bbb R^n\times\Bbb R^l\times\Bbb R^{\#
J}\times\Bbb R & | & g_i(x)=0,\ i=1, \ldots, l,\ h_j(x)=0,\ j\in J, \\
&&\sum_{i = 1}^{l}\lambda_i \nabla g_i(x ) + \sum_{j \in J} \nu_j
\nabla h_j(x) + \mu x = 0, \\
&&\sum_{i = 1}^{l}\lambda_i^2 + \sum_{j \in J} \nu_j^2 + \mu^2 =
1\}.
\end{eqnarray*}
Then $A$ is a semialgebraic set and $(0, \lambda^*, \nu^*, \mu^*)$ is a limit point of the
set $\{(x, \lambda, \nu, \mu)\in A\mid x\neq 0\}$ which is also
semialgebraic.
Using the Curve Selection Lemma~\ref{CurveSelectionLemma},
there exist a smooth semialgebraic curve $\varphi(t)$ and
semialgebraic functions $\lambda_i(t),$ $\nu_j(t),$ $\mu(t),$ $t \in
(-\epsilon, \epsilon),$ such that
\begin{enumerate}
\item[(a1)] $\big(\varphi(t), \lambda(t), \nu(t), \mu(t)\big)\in A$
	and $\varphi(t)\neq 0$ for $t \in (0, \epsilon);$
\item[(a2)] $\varphi(t) \rightarrow 0$ as $t \rightarrow 0^+.$
\end{enumerate}
It follows from (a1) that
\begin{eqnarray*}
0  &=& \sum_{i = 1}^{l} \lambda_i(t) \langle \nabla  g_i(\varphi(t)), \frac{d
\varphi(t)}{dt} \rangle + \sum_{j \in J} \nu_j(t) \langle \nabla
h_j(\varphi(t)), \frac{d \varphi(t)}{dt}
\rangle +  \mu (t) \langle \varphi(t), \frac{d \varphi(t)}{dt} \rangle \\
&=& \sum_{i = 1}^{l}  \lambda_i(t) \frac{d}{dt}(g_i \circ \varphi)(t) + \sum_{j \in
J} \nu_j(t) \frac{d}{dt}(h_j \circ \varphi)(t) +
\frac{\mu(t)}{2} \frac{d \|\varphi(t)\|^2}{dt} \\
&=& \frac{\mu (t)}{2} \frac{d \|\varphi(t)\|^2}{dt}
\end{eqnarray*}
holds for each $t\in(0,\epsilon)$. Applying the Monotonicity Lemma~\ref{MonotonicityLemma} and shrinking $\epsilon$ (if necessary), we may assume that the functions $\mu(t)$ and $\|\varphi(t)\|$ are either constant or strictly monotone. Then, (a1) implies that for each $t\in(0,\epsilon)$, $\mu(t)=0$ and hence the vectors $\nabla
g_i(\varphi(t)),$ $i = 1 , \ldots, l,$ $\nabla h_j(\varphi(t)),$  $j
\in J\subseteq J(\varphi(t)),$ are linearly dependent. By (a2), it
contradicts Lemma~\ref{regular1}.
\end{proof}

\begin{definition}{\rm
The set of {\em KKT points} of $f$ on $S$ is defined as follows:
\begin{eqnarray*}
\Sigma(f, S) := \{x \in S & \mid & \textrm{there exist $\lambda_i ,
	\nu_j \in {\Bbb R} $  such that } \\
&& \nabla  f(x)  - \sum_{i = 1}^{l} \lambda_i \nabla g_i(x) -
\sum_{j = 1}^{m} \nu_j \nabla h_j(x) = 0,  \ \textrm{ and } \\
&&  \nu_j h_j(x) = 0,\ j = 1, \ldots, m  \}.
\end{eqnarray*}}
\end{definition}

\begin{remark}{\rm
By the Tarski--Seidenberg Theorem~\ref{TarskiSeidenbergTheorem}, $\Sigma (f, S)$ is a
semialgebraic (possibly empty) set and so it has a finite number of
connected components. Moreover it is not hard to see that if
{\rm (LICQ)} holds at every point in $S$ then $f(\Sigma(f, S))$ is a finite set (see, for example, \cite[Theorem 2.3]{Ha2017}).
}\end{remark}

The following statement is well known (see, for example, \cite{Bertsekas2016}).

\begin{theorem}[KKT necessary optimality conditions] \label{th::KKT}
Assume that {\rm (LICQ)} holds at $x^* \in S.$ If $x^*$ is a local minimizer $($or maximizer$)$ of $f$ on $S,$ then $x^* \in \Sigma (f , S).$
\end{theorem}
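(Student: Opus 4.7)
The plan is to reduce to the classical Lagrange multiplier theorem on a smooth manifold, exploiting the fact that \textrm{(LICQ)} lets us treat the active inequality constraints as equalities locally. Crucially, the definition of $\Sigma(f,S)$ given above imposes no sign restriction on the $\nu_j$'s, so both minimizers and maximizers will land in $\Sigma(f,S)$ by the same argument, and Farkas' lemma is not needed.

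First, I would form the local equality manifold
\[
M := \{x \in \mathbb{R}^n \mid g_i(x)=0,\ i=1,\ldots,l,\ h_j(x)=0,\ j \in J(x^*)\}.
\]
By \textrm{(LICQ)} at $x^*$, the gradients $\nabla g_i(x^*)$ ($i=1,\ldots,l$) and $\nabla h_j(x^*)$ ($j \in J(x^*)$) are linearly independent, so by the implicit function theorem $M$ is a $C^\infty$ submanifold of $\mathbb{R}^n$ in a neighborhood of $x^*$, with tangent space
\[
T_{x^*}M = \{d \in \mathbb{R}^n \mid \nabla g_i(x^*)^T d = 0,\ i=1,\ldots,l,\ \nabla h_j(x^*)^T d = 0,\ j \in J(x^*)\}.
\]

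Next I would verify that $M$ is locally contained in $S$. For $j \notin J(x^*)$ we have $h_j(x^*) > 0$, so by continuity there is an open neighborhood $U$ of $x^*$ on which $h_j > 0$ for every such $j$. For $x \in M \cap U$, the remaining conditions $g_i(x)=0$ and $h_j(x)=0 \ge 0$ ($j \in J(x^*)$) hold by construction, so $M \cap U \subset S$. Consequently, if $x^*$ is a local minimizer (respectively, maximizer) of $f$ on $S$, it is also a local extremum of the restriction $f|_M$.

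Then I would invoke the classical Lagrange multiplier theorem on the smooth manifold $M$: at a local extremum of $f|_M$, the gradient $\nabla f(x^*)$ must lie in the normal space $(T_{x^*}M)^\perp$, which is precisely the span of the defining gradients. Hence there exist $\lambda_i, \nu_j \in \mathbb{R}$ such that
\[
\nabla f(x^*) - \sum_{i=1}^{l}\lambda_i \nabla g_i(x^*) - \sum_{j \in J(x^*)} \nu_j \nabla h_j(x^*) = 0.
\]
Finally, setting $\nu_j := 0$ for $j \notin J(x^*)$ extends this to a full multiplier vector satisfying the complementary slackness condition $\nu_j h_j(x^*) = 0$ for all $j = 1,\ldots,m$ (trivially for inactive indices, and because $h_j(x^*)=0$ for active ones). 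This places $x^*$ in $\Sigma(f,S)$. Since the argument uses only that $x^*$ is a local extremum of $f|_M$, it covers both minimizers and maximizers simultaneously. There is no real obstacle here: the only subtle point is recognizing that the absence of sign constraints in the definition of $\Sigma(f,S)$ is exactly what allows a single unified argument to handle both cases without appealing to Farkas-type separation.
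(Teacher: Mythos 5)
Your proof is correct. The paper itself offers no argument for this theorem---it is stated as well known with a citation to Bertsekas---so there is no internal proof to compare against; but your route is the natural one and, importantly, you have correctly identified the one feature that makes it work: the paper's definition of $\Sigma(f,S)$ asks only for \emph{some} real multipliers satisfying stationarity and complementary slackness, with no sign condition $\nu_j\ge 0$. The full KKT theorem one would find in Bertsekas proves the stronger signed statement for minimizers (via Farkas' lemma or a penalty/separation argument), whereas the unsigned membership claimed here follows already from the Lagrange multiplier theorem on the equality manifold $M$ obtained by freezing the active inequalities, exactly as you argue. All the individual steps check out: (LICQ) makes $0$ a regular value of the active constraint map near $x^*$, so $M$ is locally a $C^\infty$ manifold with the stated tangent space; continuity of the inactive $h_j$ gives $M\cap U\subset S$, so a local extremum of $f$ on $S$ is a local extremum of $f|_M$; stationarity of $f|_M$ puts $\nabla f(x^*)$ in the span of the active gradients; and padding with $\nu_j=0$ for inactive indices yields complementary slackness. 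The symmetry between minimizers and maximizers is automatic since the manifold criticality condition does not distinguish them. This is a clean, self-contained argument that is in fact slightly more economical than the textbook proof the paper points to, at the cost of not recovering the sign information (which the paper does not need at this point).
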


\begin{corollary}\label{cor::notconstant}
	Assume that {\rm (LICQ)} holds at $x^* \in S$ and $x^*$ is an
	isolated KKT point of $f$ on $S.$ Then the restriction of $f$ on
	$S$ is nonconstant in some neighborhood of $x^*$.
\end{corollary}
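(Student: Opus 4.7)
The plan is to argue by contradiction. Suppose $f$ is constant on $S \cap U$ for some open neighborhood $U$ of $x^*$. The idea is that in this situation every point of $S \cap U$ is automatically a local minimizer (indeed, a local maximizer as well) of $f$ on $S$, and under (LICQ) each such point must therefore be a KKT point, contradicting the isolatedness of $x^*$ in $\Sigma(f,S)$.

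More precisely, first I would invoke Lemma~\ref{regular1} to produce a radius $R>0$ such that (LICQ) holds at every $x \in S \cap \mathbb{B}_R(x^*)$, and then shrink $R$ if necessary so that $\mathbb{B}_R(x^*)\subseteq U$. Under the assumption that $f$ is constant on $S\cap U$, each point $x\in S\cap \mathbb{B}_R(x^*)$ trivially satisfies $f(x)\le f(y)$ for all $y$ in some neighborhood of $x$ inside $S$, so $x$ is a local minimizer of $f$ on $S$. Since (LICQ) holds at $x$, Theorem~\ref{th::KKT} gives $x \in \Sigma(f,S)$. Hence $S\cap \mathbb{B}_R(x^*) \subseteq \Sigma(f,S)$.

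To conclude, I would use the standing assumption (made at the start of Section~\ref{SectionTangencyVariety}) that $x^*$ is not an isolated point of $S$. This guarantees that for every $r\in(0,R]$ the set $S\cap \mathbb{B}_r(x^*)$ contains points distinct from $x^*$. Combined with the inclusion above, we obtain points of $\Sigma(f,S)$ arbitrarily close to $x^*$ but different from $x^*$, which contradicts the hypothesis that $x^*$ is an isolated KKT point. Therefore $f$ cannot be constant on $S\cap U$ for any neighborhood $U$ of $x^*$.

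The whole argument is short; the only subtle point is that one genuinely needs both ingredients\,---\,the local persistence of (LICQ) from Lemma~\ref{regular1}, to turn constancy into the KKT condition at nearby points, and the non-isolatedness of $x^*$ in $S$, to ensure nearby points actually exist. I do not foresee any real obstacle beyond checking that the chosen $R$ satisfies both $\mathbb{B}_R(x^*)\subseteq U$ and the conclusion of Lemma~\ref{regular1} simultaneously, which is immediate.
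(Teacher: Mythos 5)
Your argument is correct and is exactly the paper's intended proof: the paper simply states that the corollary ``follows immediately from Lemma~\ref{regular1} and Theorem~\ref{th::KKT},'' and your write-up fills in precisely those steps (constancy makes nearby feasible points local minimizers, (LICQ) persists locally so they are KKT points, and non-isolatedness of $x^*$ in $S$ supplies such points arbitrarily close to $x^*$). No gaps.
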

\begin{proof}
	This follows  immediately from Lemma \ref{regular1} and Theorem \ref{th::KKT}.
\end{proof}

By the above corollary, we can see that if (LICQ) holds at $x^* \in S$ and $x^*$ is an isolated KKT point of $f,$
then $x^*$ is a local minimizer (resp., maximizer) of $f$ if and only if it is an isolated local minimizer (resp., maximizer) of $f.$

\subsection{Tangencies}

\begin{definition}{\rm \cite{HaHV2009-1}\label{def::tangency}
The {\em tangency variety} of $f$ on $S$ at $x^*$ is defined as follows:
\begin{eqnarray*}
\GAMMA:=  \{x \in S & \ | \ &  \textrm{there exist real numbers
$\kappa, \lambda_i, \nu_j, \mu,$ not all zero, such that } \\
&& \kappa \nabla  f(x)  - \sum_{i = 1}^{l} \lambda_i \nabla g_i(x) -
\sum_{j = 1}^{m} \nu_j \nabla h_j(x) - \mu (x - x^*) = 0,  \ \textrm{
and } \\
&&  \nu_j h_j(x) = 0,\ j = 1, \ldots, m  \}.
\end{eqnarray*}
}\end{definition}

\begin{lemma}\label{property}
The following statements hold$:$
\begin{enumerate}
\item[{\rm (i)}] $\Sigma(f, S)\subset\GAMMA;$
\item[{\rm (ii)}] $\GAMMA$ is a nonempty$,$ closed and semialgebraic set$;$ in
	particular$,$ it has a finite number of connected components$;$
\end{enumerate}
If the restriction of $f$ on $S$ is nonconstant in some neighborhood of $x^*,$ then
\begin{enumerate}
\item[{\rm (iii)}] $x^* \in \GAMMA$ and it is a limit point of
	$\GAMMA \setminus \Sigma (f, S);$
\item[{\rm (iv)}] For any $R > 0,$ $\dim \big( \GAMMA \setminus \Sigma
	(f, S) \big) \cap \mathbb{B}_R(x^*) \ge 1.$
\end{enumerate}
\end{lemma}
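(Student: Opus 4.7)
For part (i), the plan is to substitute $\kappa = 1$, $\mu = 0$, together with the multipliers $\lambda_i, \nu_j$ witnessing $x \in \Sigma(f,S)$, into the defining system of $\GAMMA$; all conditions are then satisfied and the tuple is not all zero since $\kappa = 1$, giving $x \in \GAMMA$. For part (ii), $x^* \in \GAMMA$ follows at once from the choice $\kappa = \lambda_i = \nu_j = 0$, $\mu = 1$, so $\GAMMA$ is nonempty. To secure semialgebraicity and closedness simultaneously I would normalize the multiplier vector by $\kappa^2 + \|\lambda\|^2 + \|\nu\|^2 + \mu^2 = 1$ and realize $\GAMMA$ as the image, under the projection $(x,\kappa,\lambda,\nu,\mu)\mapsto x$, of the closed semialgebraic set $W \subset S \times \RR^{1+l+m+1}$ cut out by this normalization together with the complementarity $\nu_j h_j(x) = 0$ and the defining gradient equation. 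Since the fiber of the projection over each $x$ lies in a compact unit sphere, the projection is closed; the Tarski--Seidenberg Theorem~\ref{TarskiSeidenbergTheorem} gives semialgebraicity, and finite connectedness is Proposition~\ref{prop1}.

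For (iii) and (iv), $x^* \in \GAMMA$ is already in hand, so the substance is to construct a $1$-parameter family in $\GAMMA \setminus \Sigma(f,S)$ accumulating at $x^*$. Replacing $f$ by $-f$ if necessary, I may assume $f$ takes values strictly below $c := f(x^*)$ on $S$ in every neighborhood of $x^*$. Applying the Curve Selection Lemma~\ref{CurveSelectionLemma} to $\{y \in S : f(y) < c\}$ yields a real analytic semialgebraic curve $\phi : [0,\varepsilon) \to S$ with $\phi(0) = x^*$ and $f(\phi(t)) < c$ for $t > 0$, so by continuity the image of $f$ on a neighborhood of $x^*$ in $S$ covers an interval $(c - \eta, c]$. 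A $C^p$-cell decomposition of $S$ via Theorem~\ref{CellTheorem} combined with Sard's theorem on each smooth cell (a consequence of Theorem~\ref{SardTheorem}) shows that the stratum-wise critical values of $f|_S$ form a finite subset of $\RR$; since any $y \in \Sigma(f,S)$ is critical for $f$ restricted to the cell through it (the Lagrange representation makes $\nabla f(y)$ orthogonal to the cell's tangent space), $f(\Sigma(f,S))$ is finite, and almost every $s \in (c - \eta, c)$ lies outside it.

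For each such $s$ close enough to $c$, let $y_s$ denote a minimizer of $\|y - x^*\|^2$ over the compact set $L_s \cap \mathbb{B}_R(x^*)$ with $L_s := \{y \in S : f(y) = s\}$; the curve $\phi$ forces $\|y_s - x^*\| \to 0$ as $s \to c^-$, so $y_s$ lies in the interior of $\mathbb{B}_R(x^*)$. The Fritz John conditions for this level-set-constrained minimization, after rearrangement, produce multipliers $(\kappa, \lambda, \nu, \mu)$, not all zero, satisfying the defining equations of $\GAMMA$, so $y_s \in \GAMMA$; on the other hand, $s = f(y_s) \notin f(\Sigma(f,S))$ excludes $y_s \in \Sigma(f,S)$. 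A semialgebraic selection $s \mapsto y_s$ then supplies the desired 1-parameter family accumulating at $x^*$, proving (iii); since this family maps surjectively under $f$ onto a 1-dimensional interval, Proposition~\ref{DimensionProposition}(i) forces $\dim\bigl((\GAMMA \setminus \Sigma(f,S)) \cap \mathbb{B}_R(x^*)\bigr) \ge 1$, proving (iv). The main obstacle is the Sard-type finiteness of $f(\Sigma(f,S))$ without a global constraint qualification, which is best handled by the stratification argument outlined above.
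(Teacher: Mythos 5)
Your proof is correct, and for the substantive parts (iii)--(iv) it takes a genuinely different route from the paper. Parts (i)--(ii) match the paper in spirit, though your nonemptiness argument (take $\kappa=\lambda_i=\nu_j=0$, $\mu=1$ at $x=x^*$) is simpler than the paper's, which instead invokes the Fritz--John conditions for extremizers of $f$ on the small spheres $S_t = S\cap\{\|x-x^*\|=t\}$; your normalization-plus-proper-projection argument also makes the closedness claim (which the paper dismisses as ``immediate'') genuinely rigorous. For (iii)--(iv) the paper works with the spheres $S_t$: it takes semialgebraic curves of minimizers and maximizers of $f$ on $S_t$, which lie in $\GAMMA$ by Fritz--John, and shows via the Monotonicity Lemma that if both curves stayed inside $\Sigma(f,S)$ then $\frac{d}{dt}(f\circ\varphi)\equiv 0$, forcing $f$ to be locally constant on $S$ --- a contradiction. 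You instead work with the level sets $L_s=S\cap f^{-1}(s)$, minimize the distance to $x^*$ on them (Fritz--John again puts the minimizer in $\GAMMA$, with $\mu=-2\mu_0$ absorbing the objective's gradient), and exclude membership in $\Sigma(f,S)$ by choosing $s$ outside the finite set $f(\Sigma(f,S))$. The price of your route is that you must establish finiteness of $f(\Sigma(f,S))$ without any constraint qualification; your stratified Sard argument (every KKT point is critical for $f$ restricted to the cell through it, since the active $\nabla h_j$ and all $\nabla g_i$ annihilate the cell's tangent space) handles this correctly, and is notable because the paper's own remark on this finiteness assumes (LICQ) everywhere. What your approach buys is a direct, derivative-free exclusion from $\Sigma(f,S)$ and an immediate proof of (iv) via $\dim f(A)\le\dim A$ applied to the interval of attained values $s$; what the paper's approach buys is that it never needs the global finiteness of $f(\Sigma(f,S))$, only a local computation along two curves. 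Two cosmetic points: plain Sard on a cell is more naturally cited from the semialgebraic Sard theorem \cite[Theorem 9.6.2]{Bochnak1998} (used elsewhere in the paper) than from the parametric Theorem~\ref{SardTheorem}; and the semialgebraic selection $s\mapsto y_s$ is not actually needed for (iv), since Proposition~\ref{DimensionProposition}(i) applies directly to the semialgebraic set $(\GAMMA\setminus\Sigma(f,S))\cap\mathbb{B}_R(x^*)$, whose image under $f$ contains a cofinite subset of an interval.
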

\begin{proof}
Without loss of generality, we assume $x^* = 0.$

(i) This is clear by definition.

(ii) For each $t \ge 0,$ let
$$S_t := S \cap \{x\in\RR^n\ | \ \|x\|=t\}.$$
Since $x^*$ is not an isolated point of $S,$ there exists $\epsilon > 0$ such that $S_t$ is a nonempty and compact set
for all $t \in [0, \epsilon).$
The set $\GAMMA$ is nonempty because it contains all
extremal points of $f$ on $S_t$ for all $t\in[0,\epsilon)$ by the
	Fritz-John necessary optimality conditions (see, for example,
	\cite{Bertsekas2016}). The closedness of $\GAMMA$ follows
	immediately from the
definition. By the Tarski--Seidenberg Theorem~\ref{TarskiSeidenbergTheorem}, $\GAMMA$ is a
semialgebraic set, and so it has a finite number of connected
components (due to Proposition~\ref{prop1}).

(iii) It is clear that $x^* \in \Gamma(f, S, x^*)$ by definition.
For the real number $\epsilon>0$ defined above, we can find two  semialgebraic curves
	$\varphi, \psi \colon [0 , \epsilon) \rightarrow \Bbb R^n$ such that
\begin{enumerate}
\item[(b1)] $\varphi(t)$ and $\psi(t)$ are the minimizer and maximizer of
	$f$ on $S_t$ for $t \in [0, \epsilon),$ respectively;
\item[(b2)] $\| \varphi(t) \| = \| \psi(t) \| = t \rightarrow 0$ as $t
	\rightarrow 0^+.$
\end{enumerate}
By the Fritz-John necessary optimality conditions (see, for example, \cite{Bertsekas2016}),
$\varphi (t), \psi(t) \in \GAMMA.$ Hence, $x^*$ is not isolated in $\GAMMA.$ By the Monotonicity Lemma~\ref{MonotonicityLemma}, we may assume that $\varphi$
and $\psi$ are differentiable on $(0, \epsilon)$ (perhaps after
reducing $\epsilon$).

Suppose that $\varphi(t) \in \Sigma(f , S)$ for all $t\in(0,\epsilon).$ Then there exist semialgebraic functions $\lambda_i , \nu_j \colon (0,
	\epsilon) \rightarrow {\Bbb R}$  such that
\begin{enumerate}
\item[(b3)] $\nabla  f(\varphi(t))  - \sum_{i = 1}^{l} \lambda_i(t)
	\nabla g_i(\varphi(t)) - \sum_{j = 1}^{m} \nu_j(t) \nabla
	h_j(\varphi(t)) \equiv 0.$
\item[(b4)] $\nu_j(t) h_j(\varphi(t)) \equiv 0,\ \ j = 1, \ldots, m.$
\end{enumerate}
Since the functions $\nu_j$ and $h_j \circ \varphi$ are semialgebraic,
for $\epsilon > 0$ small enough, these functions are either constant
or strictly monotone (thanks to the Monotonicity Lemma~\ref{MonotonicityLemma}).  Then, by (b4), we can see that either
$\nu_j(t) \equiv 0$ or $(h_j \circ \varphi)(t) \equiv  0$ on
$(0,\epsilon)$; in particular,
$$\nu_j(t) \frac{d}{ dt }(h_j \circ \varphi)(t)  \equiv 0,
\quad j = 1, \ldots, m.$$
It follows from (b3)-(b4) that
\begin{eqnarray*}
0
&=& \langle \nabla f (\varphi(t)), \frac{d \varphi(t)}{dt} \rangle -
\sum_{i = 1}^{l} \lambda_i(t) \langle \nabla  g_i(\varphi(t)),
\frac{d\varphi(t)}{dt} \rangle - \sum_{j = 1}^{m} \nu_j(t) \langle
\nabla  h_j(\varphi(t)), \frac{d \varphi(t)}{dt} \rangle  \\
&=& \frac{d}{dt}(f \circ \varphi)(t) - \sum_{i = 1}^{l}  \lambda_i(t)
\frac{d}{dt}(g_i \circ \varphi)(t) - \sum_{j = 1}^{m} \nu_j(t)
\frac{d}{dt}(h_j \circ \varphi)(t)\\
&=& \frac{d}{ dt }(f \circ \varphi)(t).
\end{eqnarray*}
Consequently, $f \circ \varphi$ is a constant function on
$(0,\epsilon)$.

Similarly, suppose that the curve $\psi(t)$ lies in $\Sigma(f, S)$ for all $t\in(0,\varepsilon)$. Then $f \circ \psi$ is a constant function on $(0,\epsilon)$.
Since $f$ is continuous, we have $f \circ \varphi
\equiv f \circ \psi \equiv f(0).$
It follows from (b1) that $f$ is constant on $\mathbb{B}_{\epsilon}$, a contradiction.

Therefore, for any $0<\epsilon' < \epsilon, $ there exists $t\in(0,\epsilon')$ such that either $\varphi(t) \in \GAMMA \setminus \Sigma(f, S)$ or
$\psi(t) \in \GAMMA \setminus \Sigma(f, S).$
This, together with (b2), implies (iii).

(iv) This follows from (iii).
\end{proof}

\begin{lemma}\label{lemma3}
Assume that {\rm (LICQ)} holds at $x^*.$ Then there exists $R > 0$ such that for all $x \in \GAMMA \cap \Bbb B_R (x^*) \setminus \{x^*\},$ there exist real numbers $\lambda_i , \nu_j , \mu$ such that
\begin{eqnarray*}
&& \nabla  f(x)  - \sum_{i = 1}^{l} \lambda_i \nabla g_i(x) - \sum_{j
= 1}^{m} \nu_j \nabla h_j(x) - \mu (x - x^*) = 0,\\
&& \nu_j h_j(x) = 0,\ j = 1, \ldots, m.
\end{eqnarray*}
\end{lemma}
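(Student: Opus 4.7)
The plan is to show that near $x^*$ (but off $x^*$), the coefficient $\kappa$ attached to $\nabla f$ in the tangency condition is forced to be nonzero, and thus can be normalized to $1$ by dividing the other multipliers by $\kappa$. The strategy rests entirely on Lemma~\ref{includex}: under (LICQ) at $x^*$, there is a radius $R>0$ such that the vectors $\nabla g_i(x)$ ($i=1,\ldots,l$), $\nabla h_j(x)$ ($j\in J(x)$), and $x-x^*$ are linearly independent for every $x\in S\cap\mathbb{B}_R(x^*)\setminus\{x^*\}$.

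Take such an $R$ and fix any $x\in \GAMMA\cap\mathbb{B}_R(x^*)\setminus\{x^*\}$. By the definition of the tangency variety, there exist real numbers $\kappa,\lambda_1,\ldots,\lambda_l,\nu_1,\ldots,\nu_m,\mu$, not all zero, satisfying
\[
\kappa\nabla f(x)-\sum_{i=1}^{l}\lambda_i\nabla g_i(x)-\sum_{j=1}^{m}\nu_j\nabla h_j(x)-\mu(x-x^*)=0,\qquad \nu_j h_j(x)=0\ \ (j=1,\ldots,m).
\]
The complementarity relation $\nu_j h_j(x)=0$ forces $\nu_j=0$ whenever $j\notin J(x)$, so only those $\nu_j$ with $j\in J(x)$ can contribute to the sum above.

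I would now argue by contradiction: suppose $\kappa=0$. Then the displayed identity reduces to a linear relation
\[
\sum_{i=1}^{l}\lambda_i\nabla g_i(x)+\sum_{j\in J(x)}\nu_j\nabla h_j(x)+\mu(x-x^*)=0
\]
among precisely the vectors that Lemma~\ref{includex} asserts are linearly independent on $S\cap\mathbb{B}_R(x^*)\setminus\{x^*\}$. Therefore $\lambda_i=0$ for all $i$, $\nu_j=0$ for all $j\in J(x)$, and $\mu=0$; combined with $\nu_j=0$ for $j\notin J(x)$ and the standing assumption $\kappa=0$, this contradicts the fact that the multipliers are not all zero. Hence $\kappa\ne 0$, and dividing the original identity by $\kappa$ (and relabelling $\lambda_i/\kappa$, $\nu_j/\kappa$, $\mu/\kappa$ as the new multipliers) yields exactly the assertion of the lemma.

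The only real step is the contradiction argument in the previous paragraph; everything else is unpacking definitions and invoking complementarity. The potential subtlety — making sure the $\nu_j$ with $j\notin J(x)$ do not obstruct the application of Lemma~\ref{includex} — is handled cleanly by the complementarity conditions built into the definition of $\GAMMA$, so no further tightening of $R$ beyond that given by Lemma~\ref{includex} is needed.
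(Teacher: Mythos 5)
Your proposal is correct and is exactly the argument the paper intends: the paper's proof consists of the single line ``This follows directly from Lemma~\ref{includex},'' and your contradiction argument (if $\kappa=0$, the linear independence from Lemma~\ref{includex} forces all multipliers to vanish, so $\kappa\neq 0$ and one normalizes) is the standard way of filling in that step. No issues.
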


\begin{proof}
This follows directly from Lemma~\ref{includex}.
\end{proof}

We will show that in general, in some neighbourhood of $x^*,$ $\GAMMA \setminus \Sigma(f, S)$ is a curve. To see this, it suffices to change the Euclidean norm $\|
\cdot \|$ by a ``generic" one. More precisely, let $\mathscr{P}$ be
the set of symmetric positive definite $n \times n$ matrices. Clearly,
$\mathscr{P}$ is an open semialgebraic subset of $\Bbb
R^{\frac{n(n+1)}{2}}$, where we identify $P = (p_{ij})_{n \times n}
\in \mathscr{P}$ with $(p_{11}, \ldots , p_{1n}, p_{22} , \ldots ,
p_{2n} , \ldots , p_{nn} ) \in \Bbb R^{\frac{n(n+1)}{2}}.$ For each $P
\in \mathscr{P},$ let
\begin{eqnarray*}
\GAMMAP :=  \{x \in S & \ | \ &  \textrm{there exist real
numbers $\kappa, \lambda_i, \nu_j, \mu,$ not all zero, such that } \\
&& \kappa \nabla  f(x)  - \sum_{i = 1}^{l} \lambda_i \nabla g_i(x) -
\sum_{j = 1}^{m} \nu_j \nabla h_j(x) - \mu P(x - x^*) = 0,  \ \textrm{ and } \\
&&  \nu_j h_j(x) = 0,\ j = 1, \ldots, m  \}.
\end{eqnarray*}

\begin{theorem}\label{th::curve}
Assume that {\rm (LICQ)} holds at $x^*$ and the restriction of $f$
on $S$ is nonconstant in some neighbourhood of $x^*.$ Then there exists an open and dense semialgebraic set $\mathscr{U}$ in
$\mathscr{P}$ such that for each ${P} \in \mathscr{U},$ the set $\big(
\GAMMAP\setminus \Sigma(f , S) \big) \cap \Int{\Bbb B_{R_P}(x^*)}$ is
a one-dimensional manifold for some $R_P>0$ depending on $P$.
\end{theorem}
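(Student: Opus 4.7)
The plan is to apply Thom's weak transversality theorem (Theorem~\ref{SardTheorem}) with $P \in \mathscr{P}$ as parameter, after stratifying $S$ near $x^*$ by the active-constraint sets. By Lemmas~\ref{regular1} and~\ref{includex}, I would first fix $R_0 > 0$ so that on $S \cap \mathbb{B}_{R_0}(x^*)$ the (LICQ) holds, $J(x) \subseteq J(x^*)$, and the vectors $\nabla g_i(x),\nabla h_j(x)\ (j \in J(x)),\ x - x^*$ are linearly independent at every $x \ne x^*$. For each $J \subseteq J(x^*)$ set
\begin{align*}
M_J := \bigl\{x \in \RR^n : g_i(x) = 0,\ h_j(x) = 0\ (j \in J),\ h_j(x) > 0\ (j \in J(x^*) \setminus J)\bigr\},
\end{align*}
which by (LICQ) is a $C^\infty$ semialgebraic submanifold of dimension $d_J := n - l - |J| \ge 1$ (using Remark~\ref{rk::licq}(ii)) near $x^*$. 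Outside $\Sigma(f, S)$, the defining relation of $\Gamma_P(f,S,x^*)$ may be normalized to $\kappa = 1$; the choice of $R_0$ then forces $\mu \ne 0$, and the condition reduces to $\pi_x\bigl(\nabla f(x) - \mu P(x - x^*)\bigr) = 0$, where $\pi_x$ is orthogonal projection onto $T_xM_J$.

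Next I would introduce the $C^\infty$ semialgebraic map
\begin{align*}
\Phi_J : \bigl((M_J \setminus \{x^*\}) \cap \mathbb{B}_{R_0}(x^*)\bigr) \times \RR \times \mathscr{P} \longrightarrow \RR^{d_J},\quad (x,\mu,P) \longmapsto \pi_x\bigl(\nabla f(x) - \mu P(x - x^*)\bigr),
\end{align*}
read off in a semialgebraic local orthonormal frame for $TM_J$. The $x$-projection of $\Phi_J^{-1}(0) \cap \{\mu \ne 0\}$ coincides with $\bigl(\Gamma_P(f,S,x^*) \setminus \Sigma(f,S)\bigr) \cap M_J \cap \mathbb{B}_{R_0}(x^*)$. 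To confirm that $0$ is a regular value of $\Phi_J$, I would vary only $P$: a symmetric perturbation $\delta P \in \Sn$ alters $\Phi_J$ by $-\mu\, \pi_x\bigl((\delta P)(x - x^*)\bigr)$, and since $\mu(x - x^*) \ne 0$ the linear map $\delta P \mapsto (\delta P)(x - x^*)$ from $\Sn$ to $\RR^n$ is surjective (for any target $v$ a suitable rank-two symmetric matrix built from $v$ and $x - x^*$ does the job), whence so is its composition with $\pi_x$.

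Theorem~\ref{SardTheorem} then produces a semialgebraic subset $\Sigma_J \subset \mathscr{P}$ with $\dim \Sigma_J < \dim \mathscr{P}$ such that for every $P \in \mathscr{P} \setminus \Sigma_J$, $0$ is a regular value of $\Phi_J(\cdot,\cdot,P)$, so that its zero set is a $C^\infty$ semialgebraic one-dimensional manifold. Away from the (lower-dimensional) locus where $\pi_x\bigl(P(x - x^*)\bigr)$ vanishes, $\mu$ is uniquely determined by $x$ through the defining equation, so projection to $M_J$ is a local diffeomorphism and its image is a semialgebraic one-dimensional manifold inside $M_J$. Setting
\begin{align*}
\mathscr{U} := \mathscr{P} \setminus \bigcup_{J \subseteq J(x^*)} \Sigma_J
\end{align*}
gives an open and dense semialgebraic subset of $\mathscr{P}$; for $P \in \mathscr{U}$, choosing $R_P \in (0, R_0]$ so that only strata $M_J$ with $J \subseteq J(x^*)$ meet $\mathbb{B}_{R_P}(x^*)$, the set $\bigl(\Gamma_P(f,S,x^*) \setminus \Sigma(f,S)\bigr) \cap \Int{\mathbb{B}_{R_P}(x^*)}$ becomes the disjoint union over $J \subseteq J(x^*)$ of these one-dimensional manifolds and is therefore itself a one-dimensional manifold.

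The main obstacle I anticipate is the transversality verification: one must write $\Phi_J$ cleanly as a map into a fixed $\RR^{d_J}$ despite $T_xM_J$ varying with $x$, and then confirm surjectivity of the $P$-derivative uniformly in $(x, \mu)$ on the full source. Once this is in hand, the remainder is a routine combination of Thom's transversality with the (LICQ)-induced stratification; excising the lower-dimensional degeneracy locus where $\pi_x\bigl(P(x - x^*)\bigr) = 0$ so as to preserve the manifold property under $x$-projection is a minor but necessary cleanup.
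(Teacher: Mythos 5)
Your overall strategy --- stratify by active sets and apply Thom's weak transversality (Theorem~\ref{SardTheorem}) in the parameter $P$ --- is the same as the paper's, and your verification that the $P$-derivative is surjective is essentially the paper's rank computation. But there is a genuine gap at the very first reduction: the claim that outside $\Sigma(f,S)$ the defining relation of $\GAMMAP$ ``may be normalized to $\kappa=1$'' is false in general. A point $x\in M_J$ belongs to $\GAMMAP$ precisely when the vectors $\nabla f(x),\ \nabla g_i(x),\ \nabla h_j(x)\ (j\in J),\ P(x-x^*)$ are linearly dependent, and under {\rm (LICQ)} this happens in two ways: either $\nabla f(x)$ lies in the span of the remaining vectors (your $\kappa=1$ case), or $P(x-x^*)$ already lies in the span of the active constraint gradients, in which case $x\in\GAMMAP$ via a relation with $\kappa=0$ and, if $x\notin\Sigma(f,S)$, \emph{no} relation with $\kappa\neq 0$ exists. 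This second set is exactly your degeneracy locus $\{x : \pi_x(P(x-x^*))=0\}\setminus\Sigma(f,S)$; these points lie in $\GAMMAP\setminus\Sigma(f,S)$ but are invisible to your parametrization $\Phi_J$ (for $(x,\mu)\in\Phi_J^{-1}(0)$ with $\pi_x(P(x-x^*))=0$ one gets $\pi_x(\nabla f(x))=0$, i.e.\ $x\in\Sigma(f,S)$). So ``excising'' this locus is not a cleanup of the parametrized curve --- it is a set of genuine extra points of $\GAMMAP\setminus\Sigma(f,S)$ that your argument never accounts for, and Lemma~\ref{includex} does not exclude them because it concerns $x-x^*$, not $P(x-x^*)$, while your $R_0$ is chosen before $P$ is.

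The gap is fixable in two ways. One is to prove a $P$-dependent analogue of Lemma~\ref{includex}: since $P$ is positive definite, $\langle P\varphi(t),\frac{d\varphi}{dt}(t)\rangle=\tfrac12\tfrac{d}{dt}\big(\varphi(t)^TP\varphi(t)\big)$, so the Curve Selection Lemma argument of that lemma goes through verbatim and yields an $R_P'>0$ below which $\nabla g_i(x),\nabla h_j(x)\ (j\in J(x)),\ P(x-x^*)$ are independent on $S\setminus\{x^*\}$; taking $R_P\le\min\{R_0,R_P'\}$ then validates the $\kappa=1$ normalization. The other is the paper's route: keep $\kappa$ as a coordinate and work on $X_J=\{(x,\kappa,\lambda,\nu,\mu):\kappa^2+\sum\lambda_i^2+\sum\nu_j^2+\mu^2=1,\ \mu\neq0,\dots\}$ with the full map $\big(\kappa\nabla f-\sum\lambda_i\nabla g_i-\sum\nu_j\nabla h_j-\mu Px,\ g,\ h_J\big)$ into $\RR^{n+l+\#J}$, which covers the $\kappa=0$ relations automatically; the paper then gets $\dim\le1$ from the projection, $\dim\ge1$ from Lemma~\ref{property}(iv), and extracts the manifold statement from the Cell Decomposition Theorem rather than from your local-diffeomorphism argument. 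Either repair also disposes of the converse defect of your ``coincides'' claim (points of $\Sigma(f,S)$ appearing in the projection), which likewise occurs only on the degeneracy locus.
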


\begin{proof}
Without loss of generality, we assume $x^* = 0.$ Choose a $R>0$ satisfying the conditions in Lemmas~\ref{regular1} and \ref{includex}. For each subset $J := \{j_1,
\ldots, j_k\}$ of $\{1, \ldots, m\},$ let ${\nu} :=
({\nu}_j)_{j \in J} \in \mathbb{R}^{\#J},$ and
\begin{eqnarray*}
X_J :=  \{(x , \kappa , \lambda , {\nu} , \mu) \in \mathbb{R}^n
\times \mathbb{R} \times \mathbb{R}^l \times \mathbb{R}^{\#J}
\times \mathbb{R}  & | &  \kappa^2 + \sum_{i = 1}^l \lambda_i^2 +
\sum_{j \in J} {\nu_j}^2 + \mu^2 = 1, \mu \not= 0, \\
&& 0 < \| x \| <R ,\ h_j(x) > 0 \textrm{ for } j \not \in J \}.
\end{eqnarray*}
Clearly, $X_J$ is a semialgebraic manifold of dimension $n + l + \#J + 1.$
Assume that $X_J \ne \emptyset.$ We define the semialgebraic map
$\Phi_J \colon X_J \times \mathscr{P} \rightarrow \mathbb{R}^n \times
\mathbb{R}^{l} \times \mathbb{R}^{\#J}$ by
\begin{eqnarray*}
\Phi_J(x, \kappa , \lambda, {\nu}, \mu , P) &:= & \big(
\kappa \nabla  f(x) - \sum_{i = 1}^l \lambda_i \nabla g_i(x)
- \sum_{j \in J} {\nu}_j \nabla h_j(x) - \mu P x, \\
&& g_{1}(x), \ldots, g_{l}(x),  h_{j_1}(x), \ldots, h_{j_k}(x) \big).
\end{eqnarray*}
Take any $(x , \kappa , \lambda , {\nu} , \mu , P) \in
\Phi_J^{-1}(0).$ Then $x \not= 0.$ Without loss of generality, we
assume that $x_1 \not= 0.$ Note that $p_{ij} = p_{ji}.$ Then, a direct
computation shows that
\begin{eqnarray*}
\begin{pmatrix}
D_x \Phi_J \ \big |\ D_{(p_{11}, \ldots, p_{1n})} \Phi_J
\end{pmatrix}  =
\left(
\begin{array}{c|c}
$$ \cdots & - \mu
\left(
\begin{array}{ccccc}
	x_1 &  x_2 &  x_3 & \cdots &  x_n\\
	0	    &  x_1 & 0           & \cdots & 0\\
	0           & 0           &  x_1 & \cdots & 0\\
	\vdots      & \vdots      & \vdots      &        & \vdots\\
	0           & 0           & 0           & \cdots & x_1\\
\end{array}\right)$$ \\
\hline
$$ [\nabla g_{1}(x)]^T & 0 $$\\
\vdots & \vdots \\
$$ [\nabla g_{l}(x)]^T & 0$$\\
\hline
$$[\nabla h_{j_1}(x)]^T & 0$$\\
\vdots & \vdots \\
$$[\nabla h_{j_k} (x)]^T & 0$$
\end{array}\right)
,\end{eqnarray*}
where $D_x \Phi_J$ and $D_{(p_{11}, \ldots, p_{1n})} \Phi_J$ denote
the derivative of $\Phi_J$ with respect to $x$ and $(p_{11}, \ldots,
p_{1n}),$ respectively. It follows from Lemma~\ref{regular1} that
the rank of the Jacobian matrix of the map $\Phi_J$ is $n + l + \#J$
and hence $0$ is a regular value of $\Phi_J.$ By the Sard theorem with parameter~\ref{SardTheorem}, there exists a semialgebraic
subset $\Sigma_J \subset \mathscr{P}$ of dimension $< \dim
\mathscr{P}$  such that for each $P \in \mathscr{P} \setminus
\Sigma_J,$ $0$ is a regular value of the map
\begin{align*}
		\Phi_{J, P} \colon X_J \  \rightarrow\  \Bbb R^n \times \Bbb
		R^l \times \Bbb R^{\# J}, \qquad (x, \kappa , \lambda, {\nu},
		\mu)\ \mapsto \  \Phi_{J}(x, \kappa , \lambda, {\nu}, \mu, P).
\end{align*}
Thus, $\Phi_{J , P}^{-1}(0)$ is either empty or an one-dimensional
submanifold of $\Bbb R^n \times \Bbb R \times \Bbb R^l \times \Bbb
R^{\# J} \times \Bbb R.$ By Proposition~\ref{DimensionProposition},
$\dim \pi_J (\Phi_{J , P}^{-1}(0)) \le 1,$ where $\pi_J \colon \Bbb
R^n \times \Bbb R \times \Bbb R^l \times \Bbb R^{\# J} \times \Bbb R
\rightarrow \Bbb R^n$ is the projection on the first $n$ coordinates.
Let $\mathscr{U} := \bigcap\limits_J \big(\mathscr{P} \setminus
\Sigma_J \big),$ where the intersection is taken over all subsets $J$
of $\{1 , \ldots , m\}$ with $X_J \not= \emptyset.$ Taking any $P \in
\mathscr{U},$ by Lemma~\ref{includex}, it is easy to check that
\begin{align*}
\Delta_P := \big( \GAMMAP\setminus \Sigma (f , S) \big)
\bigcap {\Int{\Bbb B_R}}
\subseteq\bigcup_J \pi_J
\big( \Phi_{J , P}^{-1}(0) \big).
\end{align*}
Hence $\dim \Delta_P \le 1.$ On the other hand, by
Lemma~\ref{property} (iv), we can see that $\dim \Delta_P \ge 1.$
Therefore, $\dim \Delta_P = 1.$ By Cell Decomposition Theorem~\ref{CellTheorem}, $\Delta_P$ is a finite disjoint union of
one-dimensional manifolds and points. Now we can choose a $R_P\le R$
to exclude the finitely many points in the union by
$\Int{\mathbb{B}_{R_P}}$. As the remaining one-dimensional manifolds in
$\Int{\mathbb{B}_{R_P}}$ are disjoint, the proof is complete.
\end{proof}

We next show that after a generic linear change of coordinates,
$\GAMMA\setminus\Sigma(f,S)$ is indeed a curve in a neighborhood of
$x^*.$ Let $\mathscr{I}^{n \times n}$ be the set of all invertible
$n\times n$ matrices in $\RR^{n\times n}.$ For $p \in\RR[x]$ and
$A\in\mathscr{I}^{n\times n},$ denote $p^A := p (Ax)$ the polynomial
obtained by applying the change of variables $A$ to $p$ and
\[
S^A := \{x\in{\Bbb R}^n\mid g_i^A(x) = 0,\ i = 1, \ldots, l,\
h_j^A(x)\ge0,\ j=1,\ldots, m\}.
\]

\begin{remark}{\rm
Note that the extremality of $x^*$ as a KKT point of $f$ on $S$
remains the same after an invertible linear change of coordinates,
which means that we can equivalently consider the extremality of $f^A$
at $A^{-1}x^*$ over the set $S^A$ for any invertible matrix
$A\in\RR^{n\times n}$.
}\end{remark}

\begin{corollary}\label{cor::curve2}
Assume that {\rm (LICQ)} holds at $x^*.$ Then there exists a non-empty Zariski open set $\mathcal{E}$ in $\RR^{n\times n}$
such that for each $A\in \mathcal{E},$ the set $\left(\Gamma(f^A,
S^A,A^{-1}x^*)\setminus\Sigma(f^A, S^A)\right)\cap\Int{\Bbb B_{R_A}
(A^{-1}x^*)}$ is a
manifold of dimension one for some $R_A>0$ depending on $A$.
\end{corollary}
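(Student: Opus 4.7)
The plan is to reduce to Theorem~\ref{th::curve} via an invertible linear change of coordinates and then upgrade the open dense semialgebraic parameter region supplied by that theorem to a Zariski open one in $\RR^{n\times n}$. The key observation is that the substitution $y = Ax$ converts the Euclidean-norm tangency variety $\Gamma(f^A,S^A,A^{-1}x^*)$ into the $P$-weighted tangency variety $\GAMMAA$ of $f$ on $S$ at $x^*$, with weight $P := A^{-T}A^{-1} \in \mathscr{P}$.

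To verify the correspondence I would use $\nabla f^A(x) = A^T \nabla f(Ax)$ (and analogously for $g_i^A$, $h_j^A$) to rewrite the defining relation of $\Gamma(f^A,S^A,A^{-1}x^*)$ as
\[
\kappa A^T \nabla f(Ax) - \sum_{i} \lambda_i A^T \nabla g_i(Ax) - \sum_{j} \nu_j A^T \nabla h_j(Ax) - \mu(x - A^{-1}x^*) = 0,
\]
together with $\nu_j h_j(Ax) = 0$. Left-multiplying by $A^{-T}$ and setting $y = Ax$ yields exactly the defining system of $\GAMMAA$, and the same substitution identifies $\Sigma(f^A,S^A)$ with $A^{-1}\Sigma(f,S)$. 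Since $A$ is an affine diffeomorphism, both {\rm (LICQ)} and the nonconstancy of $f$ restricted to $S$ near $x^*$ transfer, and the Euclidean ball $\mathbb{B}_{R_A}(A^{-1}x^*)$ is sent into $\mathbb{B}_{R_P}(x^*)$ as soon as $R_A \le R_P/\Vert A\Vert$.

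It remains to find a Zariski open set of admissible $A$. Theorem~\ref{th::curve} provides an open dense semialgebraic $\mathscr{U}\subseteq\mathscr{P}$ of good $P$, and the semialgebraic map $\Psi \colon \mathscr{I}^{n\times n}\to\mathscr{P}$, $A\mapsto A^{-T}A^{-1}$, is surjective via Cholesky factorization, with fibers of the form $A_0\cdot O(n)$ of constant dimension $n(n-1)/2$. The standard fiber dimension inequality for semialgebraic maps then gives
\[
\dim\Psi^{-1}(\mathscr{P}\setminus\mathscr{U}) \le \dim(\mathscr{P}\setminus\mathscr{U}) + \tfrac{n(n-1)}{2} < \tfrac{n(n+1)}{2} + \tfrac{n(n-1)}{2} = n^2.
\]
The ``bad'' set $B := \Psi^{-1}(\mathscr{P}\setminus\mathscr{U}) \cup \{A : \det A = 0\}$ is therefore a semialgebraic subset of $\RR^{n\times n}$ of dimension strictly less than $n^2$; hence its Zariski closure $\overline{B}^{\mathcal{Z}}$ is a proper algebraic subvariety, and $\mathcal{E} := \RR^{n\times n}\setminus\overline{B}^{\mathcal{Z}}$ is a non-empty Zariski open set every element of which is invertible and satisfies $A^{-T}A^{-1}\in\mathscr{U}$. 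For any such $A$, Theorem~\ref{th::curve} yields $R_P > 0$ such that $(\GAMMAA\setminus\Sigma(f,S)) \cap \Int\mathbb{B}_{R_P}(x^*)$ is a one-dimensional manifold; pulling this back along $x\mapsto Ax$ and setting $R_A := R_P/\Vert A\Vert$ gives the required manifold structure on $(\Gamma(f^A,S^A,A^{-1}x^*)\setminus\Sigma(f^A,S^A))\cap\Int\mathbb{B}_{R_A}(A^{-1}x^*)$.

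The main obstacle is precisely this semialgebraic-to-Zariski upgrade, since the Sard-with-parameter argument used inside Theorem~\ref{th::curve} only produces a semialgebraic genericity condition on $P$. I would handle it by exploiting the elementary fact that any semialgebraic subset of $\RR^{N}$ of dimension smaller than $N$ is contained in the zero set of some nonzero polynomial, so that its complement contains a non-empty Zariski open subset; including the hypersurface $\{\det A = 0\}$ in $B$ is what allows the argument to extend from the open subset $\mathscr{I}^{n\times n}$ to all of $\RR^{n\times n}$.
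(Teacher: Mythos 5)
Your proposal is correct, and the reduction step is exactly the paper's: you use $\nabla f^A(x)=A^T\nabla f(Ax)$ to identify $\Gamma(f^A,S^A,A^{-1}x^*)$ with $A^{-1}\bigl(\Gamma_{A^{-T}A^{-1}}(f,S,x^*)\bigr)$ and $\Sigma(f^A,S^A)$ with $A^{-1}(\Sigma(f,S))$, invoke Theorem~\ref{th::curve} with $P=A^{-T}A^{-1}$, and take $R_A=R_P/\Vert A\Vert$ so that $\mathbb{B}_{R_A}(A^{-1}x^*)\subseteq A^{-1}(\mathbb{B}_{R_P}(x^*))$.

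Where you diverge is the semialgebraic-to-Zariski upgrade. The paper first passes to $\mathscr{U}^{-1}=\{P^{-1}\mid P\in\mathscr{U}\}$, which is again open and dense in $\mathscr{P}$, and applies \cite[Lemma~1.4]{Ha2017} to place a set of the form $\{\mathcal{F}\neq 0\}$ inside $\mathscr{U}^{-1}$ for a nonconstant polynomial $\mathcal{F}$; since $A\mapsto AA^T$ is a \emph{polynomial} map, $\mathcal{E}=\{A\mid\mathcal{F}(AA^T)\neq 0\}\cap\mathscr{I}^{n\times n}$ is immediately Zariski open, with no dimension count needed. You instead pull back $\mathscr{P}\setminus\mathscr{U}$ along the non-polynomial map $\Psi(A)=A^{-T}A^{-1}$, bound the dimension of the preimage using surjectivity, the constant fiber dimension $\dim O(n)=n(n-1)/2$, and the fiber-dimension inequality for semialgebraic maps, and then take the complement of the Zariski closure of the bad set. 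Both routes are valid and rest on the same elementary fact (a semialgebraic set of dimension $<N$ in $\RR^N$ lies in a proper algebraic hypersurface); the paper's inversion trick buys a shorter argument by making the relevant map algebraic, while yours is self-contained modulo the standard fiber-dimension theorem and does not require citing the specific lemma from \cite{Ha2017}.
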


\begin{proof}
Since $\nabla f^A(x)=A^T\nabla f(Ax)$ for any $A\in
\mathscr{I}^{n\times n}$, it is easy to check that $\Sigma(f^A, S^A)=A^{-1}(\Sigma(f,
S))$ and $\Gamma(f^A, S^A,A^{-1}x^*)=A^{-1}(\GAMMAA)$.
Let $\mathscr{U}$ be the open and dense semialgebraic set in
$\mathscr{P}$ as described in Theorem~\ref{th::curve}.
Let $\mathscr{U}^{-1} := \{P^{-1}\in\mathscr{P}\mid P\in\mathscr{U}\}$.
As $\mathscr{U}^{-1}$ is also an open and dense semialgebraic set in
$\mathscr{P}$, by \cite[Lemma 1.4]{Ha2017}, there exists a
non-constant polynomial $\mathcal{F} \colon
\mathbb{R}^{\frac{n(n+1)}{2}} \rightarrow \mathbb{R}$ such that
$\mathscr{U}^{-1}\supseteq\{P\in\mathscr{P}\mid\mathcal{F}(P)\neq
0\}$. Let $\mathcal{E}:=\{A\in\mathscr{I}^{n\times n}\mid
\mathcal{F}(AA^T)\neq 0\}$, then $\mathcal{E}$ is a non-empty Zariski
open set in $\RR^{n\times n}$. For each $A\in\mathcal{E}$, we have $A^{-T}A^{-1}\in\mathscr{U}$. By
Theorem~\ref{th::curve}, $\left(\GAMMAA\setminus\Sigma(f,
S)\right)\cap\Int{\Bbb B_{R'_A}(x^*)}$ is a manifold of dimension
one for some $R'_A>0$ depending on $A$. Set $R_A=R'_A/\Vert A\Vert$. Then we can verify that
$\mathbb{B}_{R_A}(A^{-1}x^*)\subseteq A^{-1}(\mathbb{B}_{R'_A}(x^*))$.
Consequently, $\left(\Gamma(f^A, S^A,A^{-1}x^*)\setminus\Sigma(f^A,
S^A)\right)\cap\Int{\Bbb B_{R_A}(A^{-1}x^*)}\subseteq A^{-1}
\left(\left(\GAMMAA\setminus\Sigma(f, S)\right)\cap\Int{\Bbb
	B_{R'_A}(x^*)}\right)$ is a manifold of dimension one.
\end{proof}

In Appendix \ref{sec::correctness}, we will prove that the complex
version of Corollary \ref{cor::curve2} still holds, which is crucial
in the
design of algorithms for testing the extremality of $x^*$.

\section{Faithful radii and types of KKT points}\label{sec::type}

In this section, we first define the so-called {\itshape faithful
radius} of $f$ on $S$ at $x^*$ by means of the tangency variety
$\GAMMA$. Then, we show that the type of $x^*$ can be determined by
the global extrema of $f$ over the intersection of
$\GAMMA$ and the ball centered at $x^*$ with a faithful radius.

\subsection{On faithful radii}

\begin{definition}\label{def::faithr}{\rm
We say that a real number $R > 0$ is a {\em faithful radius} of $f$ on $S$ at
$x^*$ if the following conditions hold:
\begin{enumerate}
\item[(i)] $\Sigma(f, S) \cap \mathbb{B}_R(x^*) = \{ x^* \};$
\item[(ii)] $\GAMMA \cap \mathbb{B}_R(x^*)$ is connected; and
\item[(iii)] $\GAMMA \cap \{x\in\RR^n\mid f(x) = f(x^*) \} \cap \mathbb{B}_R(x^*) = \{ x^* \}.$
\end{enumerate}
}\end{definition}

\begin{theorem}
Assume that {\rm (LICQ)} holds at $x^*.$ The point $x^* \in S$ is an isolated
KKT point of $f$ on $S$ if and only if  there is a faithful radius $R$
of $f$ on $S$ at $x^*.$
\end{theorem}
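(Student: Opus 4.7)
The plan is to split into the two implications. The backward direction is immediate: condition~(i) of Definition~\ref{def::faithr} forces $\Sigma(f,S) \cap \mathbb{B}_R(x^*) = \{x^*\}$, so $x^*$ is isolated in $\Sigma(f,S)$.

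For the forward direction, I would produce three radii $R_1,R_2,R_3 > 0$, each ensuring one of conditions (i)--(iii) of Definition~\ref{def::faithr}, and then set $R := \min\{R_1,R_2,R_3\}$. The radius $R_1$ is handed to us by the isolation hypothesis on $x^*$. For $R_2$, I would use that $\GAMMA$ is a closed semialgebraic set (Lemma~\ref{property}(ii)) containing $x^*$; by the local conic structure of closed semialgebraic sets~\cite{Bochnak1998}, for every sufficiently small $R_2 > 0$ the intersection $\GAMMA \cap \mathbb{B}_{R_2}(x^*)$ is semialgebraically homeomorphic to the cone over $\GAMMA \cap \partial\mathbb{B}_{R_2}(x^*)$ with apex $x^*$, hence path-connected.

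The main obstacle is producing $R_3$. My plan is to argue by contradiction using the Curve Selection Lemma~\ref{CurveSelectionLemma}. If no such $R_3$ existed, the semialgebraic set
\[
A := \bigl\{(x,\kappa,\lambda,\nu,\mu) : x \in \GAMMA\setminus\{x^*\},\ f(x) = f(x^*),\ \kappa^2 + \|\lambda\|^2 + \|\nu\|^2 + \mu^2 = 1,\ \text{the tangency equations hold}\bigr\}
\]
would, by compactness of the unit sphere in the multiplier coordinates, accumulate at some point $(x^*,\kappa^*,\lambda^*,\nu^*,\mu^*)\in\bar A\setminus A$. Curve Selection would then deliver a real-analytic semialgebraic curve $(\varphi(t),\kappa(t),\lambda(t),\nu(t),\mu(t))\in A$ for $t\in(0,\epsilon)$, with $\varphi(0)=x^*$. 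Using Lemma~\ref{includex} I would rule out $\kappa(t)\equiv 0$ (otherwise linear independence of the vectors $\nabla g_i(\varphi(t)),\nabla h_j(\varphi(t))\ (j\in J(\varphi(t))),\varphi(t)-x^*$ would force the whole normalized tuple to vanish), and using the isolation of $x^*$ in $\Sigma(f,S)$ rule out $\mu(t)\equiv 0$ (otherwise dividing the tangency equation by $\kappa(t)$ would place $\varphi(t)$ in $\Sigma(f,S)\cap\mathbb{B}_{R_1}(x^*)\setminus\{x^*\}$, which is empty).

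Finally, taking the inner product of the tangency equation with $\varphi'(t)$ yields
\[
\kappa(t)\tfrac{d}{dt}(f\circ\varphi)(t) - \sum_i \lambda_i(t)\tfrac{d}{dt}(g_i\circ\varphi)(t) - \sum_j \nu_j(t)\tfrac{d}{dt}(h_j\circ\varphi)(t) - \tfrac{\mu(t)}{2}\tfrac{d}{dt}\|\varphi(t)-x^*\|^2 \equiv 0.
\]
Since $f\circ\varphi\equiv f(x^*)$ the first term vanishes; $g_i\circ\varphi\equiv 0$ kills the second; and the Monotonicity Lemma~\ref{MonotonicityLemma} applied to the semialgebraic functions $\nu_j(t)$ and $(h_j\circ\varphi)(t)$ (which satisfy $\nu_j\cdot(h_j\circ\varphi)\equiv 0$) makes each summand of the third term vanish on $(0,\epsilon)$, perhaps after shrinking $\epsilon$. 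This leaves $\mu(t)\,\tfrac{d}{dt}\|\varphi(t)-x^*\|^2 \equiv 0$. Real analyticity of $\varphi$, together with $\varphi(0)=x^*$ and $\varphi(t)\neq x^*$ for $t>0$, forces $\tfrac{d}{dt}\|\varphi(t)-x^*\|^2$ to have only isolated zeros, so $\mu$ would have to vanish on a dense open subset, contradicting $\mu(t)\neq 0$. The delicate point in this last paragraph is the Curve Selection/normalization set-up that delivers $\kappa,\lambda,\nu,\mu$ as real-analytic functions of $t$, which is what allows the Monotonicity Lemma and the analyticity of $\|\varphi(t)-x^*\|^2$ to combine at the very end.
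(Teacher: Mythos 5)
Your proof is correct, and its overall architecture matches the paper's: the backward implication is immediate from condition (i), and the forward implication produces a separate radius for each of the three conditions, with the only real work being condition (iii) via the Curve Selection Lemma and differentiation of the tangency relation along the curve. Two of your sub-arguments differ from the paper's in a genuine way. For the connectedness radius $R_2$, the paper decomposes $\GAMMA$ into cells and applies the semialgebraic Sard theorem to the distance function, then argues that a component avoiding $x^*$ would carry a spurious critical point; you instead invoke the local conic structure theorem for closed semialgebraic sets, which yields path-connectedness of $\GAMMA\cap\mathbb{B}_{R_2}(x^*)$ in one stroke and is arguably cleaner (the paper's cell-by-cell argument has to worry about where the minimizer of $\|x\|^2$ sits relative to the cells). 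For condition (iii), the paper first normalizes $\kappa=1$ by citing Lemma~\ref{lemma3} (itself a consequence of Lemma~\ref{includex}) and only then applies Curve Selection, whereas you carry the full normalized multiplier tuple into the Curve Selection set and rule out $\kappa(t)=0$ and $\mu(t)\equiv 0$ pointwise afterwards; these are equivalent, though note your phrase ``rule out $\kappa(t)\equiv 0$'' understates what Lemma~\ref{includex} actually gives you, namely $\kappa(t)\neq 0$ for every small $t>0$, which is what you need to divide by $\kappa(t)$ in the $\mu\equiv 0$ case. Finally, to pass from $\mu(t)\,\tfrac{d}{dt}\|\varphi(t)-x^*\|^2\equiv 0$ to $\mu\equiv 0$ you use real analyticity of the curve, while the paper uses the Monotonicity Lemma; both are legitimate since the Curve Selection Lemma as stated delivers a real-analytic curve. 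In short: same skeleton, with a more economical connectivity argument and a self-contained normalization step.
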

\begin{proof}
{\em Sufficiency.} This is clear.

{\em Necessity.} Without loss of generality, we assume that $x^* = 0$
and $f(x^*) = 0.$ As $0$ is an isolated KKT point, there exists $R_1>0$ such that $\Sigma(f, S) \cap
\mathbb{B}_{R_1} = \{ 0 \}.$

By Theorem \ref{CellTheorem}, $\GAMMA$ is a disjoint union
of finitely many submanifolds $\Gamma_1,\ldots, \Gamma_s$, each
diffeomorphic to an open hypercube $(0,1)^{\dim(\Gamma_i)}$. Consider
the map $\Phi: x \mapsto \sum_{i=1}^nx_i^2$ on these manifolds. By the
semialgebraic version of Sard's theorem \cite[Theorem
9.6.2]{Bochnak1998}, there are finitely many critical values of the
map $\Phi$ on $\Gamma_1,\ldots,\Gamma_s$. Fix a $R_2\in \Bbb R_+$ to
be the smallest one, then $\GAMMA \cap \mathbb{B}_R$ is connected for
any $0 < R < \sqrt{R_2}.$ To see this, note
that by Proposition~\ref{prop1}, $\GAMMA \cap \mathbb{B}_R$ has
finitely many connected components $\mathcal{C}_1,\ldots,
\mathcal{C}_l$ which are closed in $\Bbb R^n.$ To the contrary,
suppose that $l\ge 2$ and $0\not\in\mathcal{C}_2$. As $\mathcal{C}_2$ is closed and
bounded, the function $\sum_{i=1}^nx_i^2$ reaches its minimum on $\mathcal{C}_2$ at a
minimizer $u$.  Since $\mathcal{C}_2\subseteq \Gamma_i$ for some
$i$, $u$ is a critical point of $\Phi$ on $\Gamma_i$, a contradiction.

Finally, we show that $\GAMMA \cap \{x\in\RR^n\mid f(x)=0\} \cap \Bbb B_R = \{ 0
\}$ for some $R > 0.$ If this is not the case, then by
Lemma~\ref{lemma3} and the Curve Selection
Lemma~\ref{CurveSelectionLemma}, there exist a smooth nonconstant
semialgebraic curve $\varphi(t)$ and semialgebraic functions
$\lambda_i(t), \nu_j(t), \mu(t), t \in (0, \epsilon),$ such
that
\begin{enumerate}
\item[(d1)] $\varphi(t) \in S$ and $f(\varphi(t))=0, $ for $t \in (0, \epsilon);$
\item[(d2)] $\|\varphi(t)\| \rightarrow 0$ as $t \rightarrow 0^+;$
\item[(d3)] $\nabla f(\varphi(t)) - \sum_{i = 1}^{l} \lambda_i(t)
	\nabla g_i(\varphi(t)) - \sum_{j = 1}^{m}\nu_j (t) \nabla
	h_j(\varphi(t)) - \mu(t) \varphi(t) \equiv  0;$ and
\item[(d4)] $\nu_j(t) h_j(\varphi(t)) \equiv  0, j = 1, \ldots, m.$
\end{enumerate}

As is shown in the proof of Lemma~\ref{property} (iii), we may assume that
$$\nu_j(t) \frac{d}{ dt }(h_j \circ \varphi)(t)  \equiv 0,
\quad j = 1, \ldots, m.$$ It follows from (d3) that
\begin{eqnarray*}
0  &=& \langle\nabla f(\varphi(t)), \frac{d\varphi(t)}{dt}
\rangle - \sum_{i = 1}^{l} \lambda_i(t) \langle \nabla  g_i(\varphi(t)), \frac{d
\varphi(t)}{dt} \rangle - \sum_{j = 1}^{m} \nu_j(t)
\langle \nabla  h_j(\varphi(t)), \frac{d \varphi(t)}{dt} \rangle \\
&& -  \mu (t) \langle \varphi(t), \frac{d \varphi(t)}{dt} \rangle \\
&=&  \frac{d}{dt}(f \circ \varphi)(t) -\sum_{i = 1}^{l}
\lambda_i(t) \frac{d}{dt}(g_i \circ \varphi)(t) - \sum_{j = 1}^{m} \nu_j(t)
\frac{d}{dt}(h_j \circ \varphi)(t) - \frac{\mu(t)}{2} \frac{d \|\varphi(t)\|^2}{dt} \\
&=& -\frac{\mu (t)}{2} \frac{d \|\varphi(t)\|^2}{dt}
\end{eqnarray*}
holds for each $t\in(0,\epsilon)$. By the Monotonicity Lemma~\ref{MonotonicityLemma}, there exists $\epsilon' \in (0, \epsilon)$ such that for each $t\in(0,\epsilon')$, it holds that
$\mu(t)=0$. Hence, (d3) implies that $\varphi(t)\in \Sigma(f, S)$ for $t \in (0, \epsilon'),$ a
contradiction.
\end{proof}

Now, we present some sufficient conditions to guarantee a $R>0$ being a
faithful radius, which will be used for computing a faithful radius of
$x^*$ in Section~\ref{sec::computation}.
For a given $\mathscr{R}\in\RR_+$, consider the following condition
\begin{condition}\label{con::curve}
	\begin{enumerate}[\upshape (i)]
	\item $\mathscr{R}$ is an isolation radius of $x^*$ in the sense that
	$\Sigma(f, S) \cap \mathbb{B}_{\mathscr{R}}(x^*)=\{x^*\}.$
	
	\item The vectors $\nabla g_i(x), i = 1, \ldots, l,\ \nabla h_j(x),\ j \in
		J(x),$ and $x-x^*$ are linearly independent for all $x \in S \cap \Bbb B_\mathscr{R}(x^*) \setminus \{x^*\};$
\item For any $u\in \GAMMA \cap {\sf int}(\mathbb{B}_\mathscr{R}(x^*))$ with $u \ne x^*,$ there exist a neighborhood
$\mathcal{O}_u\subset\mathbb{B}_\mathscr{R}(x^*)$
of $u,$ a differentiable map $\phi \colon
(-\varepsilon,\varepsilon)\rightarrow\RR^n$ such that
$\phi((-\varepsilon,\varepsilon))=\GAMMA\cap\mathcal{O}_u,$
$\phi(0)=u$ and $\frac{d\Vert\phi\Vert^2}{d t}(0)\neq 0.$
\end{enumerate}
\end{condition}

\begin{remark}{\rm
 Assume that (LICQ) holds at $x^*$ and $x^*$ is an isolated KKT point, in particular,
Condition~\ref{con::curve}~(i) holds for all $\mathscr{R}> 0$ sufficiently small. In light of Lemma~\ref{includex}, Condition~\ref{con::curve}~(ii) also holds for all $\mathscr{R}> 0$ sufficiently small. By Corollary~\ref{cor::curve2} and the Cell Decomposition Theorem~\ref{CellTheorem}, up to a generic linear change of coordinates, $(\GAMMA\backslash\{x^*\}) \cap {\sf
		int}(\mathbb{B}_R(x^*))$ is a one-dimensional
		smooth manifold for some $R>0$. Then, due to Sard's
		theorem (see, for example, \cite[Corollary~1.1]{Ha2017}), Condition~\ref{con::curve}~(iii) holds for all $\mathscr{R}>0$ small enough. Furthermore, in Section~\ref{sec::computation}, we shall see that a
		$\mathscr{R}>0$ satisfying Condition~\ref{con::curve} can be
		computed by some algebraic computations implemented in the
		current computer algebra systems, like {\scshape Maple}.
}\end{remark}

\begin{theorem}\label{th::main2}
Suppose that $\mathscr{R}\in\RR_+$ satisfies Condition~\ref{con::curve}. Then$,$ any $R\in\RR_+$ with $R<\mathscr{R}$ is a faithful radius of $x^*.$
\end{theorem}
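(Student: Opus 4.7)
The plan is to verify, for $R \in \RR_+$ with $R < \mathscr{R},$ the three conditions defining a faithful radius. Condition~(i) of Definition~\ref{def::faithr} is immediate since $\mathbb{B}_R(x^*)\subseteq \mathbb{B}_\mathscr{R}(x^*),$ so by Condition~\ref{con::curve}(i) we have $\Sigma(f,S)\cap \mathbb{B}_R(x^*)=\{x^*\}.$ The crux of the remainder is a \emph{global nonvanishing multiplier} observation. At any $x\in \GAMMA \cap \mathbb{B}_\mathscr{R}(x^*)\setminus\{x^*\},$ write the defining tangency relation
\[
\kappa\nabla f(x)-\sum_{i=1}^l\lambda_i\nabla g_i(x)-\sum_{j=1}^m\nu_j\nabla h_j(x)-\mu(x-x^*)=0,\qquad \nu_jh_j(x)=0.
\]
We cannot have $\kappa=0,$ since Condition~\ref{con::curve}(ii) applied to the vectors $\nabla g_i(x),$ $\{\nabla h_j(x):j\in J(x)\},$ and $x-x^*$ would force all multipliers to vanish, violating nontriviality. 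Normalizing $\kappa=1,$ the tuple $(\lambda,\nu,\mu)$ is uniquely determined by the same linear independence and depends continuously on $x.$ Moreover $\mu=0$ would place $x\in \Sigma(f,S)\cap \mathbb{B}_\mathscr{R}(x^*)=\{x^*\},$ a contradiction; so $\mu$ is continuous and nowhere zero on this punctured set.

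I would then combine Condition~\ref{con::curve}(iii) with the Cell Decomposition Theorem~\ref{CellTheorem} to describe $\GAMMA\cap \Int{\mathbb{B}_\mathscr{R}(x^*)}\setminus\{x^*\}$ as a one-dimensional $C^1$ semialgebraic manifold on which $\|\cdot-x^*\|^2$ has everywhere nonzero differential. Its finitely many connected components are open arcs: a closed-loop component would force a critical point of $\|\cdot-x^*\|^2$ by compactness, which is excluded. For each arc $U,$ the limit points of a global parametrization at the two ends lie in $\mathbb{B}_\mathscr{R}(x^*);$ an interior limit in $\Int{\mathbb{B}_\mathscr{R}(x^*)}\setminus\{x^*\}$ is incompatible with the local $1$-manifold structure there (the arc would extend through it), so each end lies in $\partial\mathbb{B}_\mathscr{R}(x^*)\cup\{x^*\}.$ Strict monotonicity of $\|\cdot-x^*\|^2$ along $U$ forces the two limit values to be distinct, so both cannot lie on $\partial\mathbb{B}_\mathscr{R}(x^*).$ Hence exactly one end of $U$ is $x^*$ and the other is on $\partial\mathbb{B}_\mathscr{R}(x^*),$ and $\|\cdot-x^*\|$ runs monotonically from $0$ up to $\mathscr{R}$ along $U.$

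With this structural picture the remaining two conditions follow. For~(ii): since $R<\mathscr{R},$ each $U\cap \mathbb{B}_R(x^*)$ is a sub-arc having $x^*$ as a limit, so $(U\cap \mathbb{B}_R(x^*))\cup\{x^*\}$ is path-connected; hence $\GAMMA\cap \mathbb{B}_R(x^*)=\{x^*\}\cup\bigcup_U (U\cap \mathbb{B}_R(x^*))$ is connected as a union of path-connected sets sharing the common point $x^*.$ For~(iii): along a $C^1$ parametrization $\phi$ of $U,$ differentiating $g_i\circ\phi\equiv 0$ and applying the Monotonicity Lemma~\ref{MonotonicityLemma} to the semialgebraic functions $\nu_j$ and $h_j\circ\phi$ (to get $\nu_j\cdot(h_j\circ\phi)'\equiv 0,$ exactly as in the proof of Lemma~\ref{property}(iii)) yield
\[
\frac{d(f\circ\phi)}{dt}(t)=\frac{\mu(t)}{2}\,\frac{d\|\phi(t)-x^*\|^2}{dt}.
\]
Since $\mu$ is continuous and nowhere zero on the connected $U,$ it has constant sign there, as does $d\|\phi-x^*\|^2/dt,$ so $f\circ\phi$ is strictly monotone on $U.$ As $\phi\to x^*$ at the $x^*$-end we have $f(\phi)\to f(x^*),$ so by strict monotonicity $f\ne f(x^*)$ everywhere on $U,$ whence $\GAMMA\cap\{f=f(x^*)\}\cap \mathbb{B}_R(x^*)=\{x^*\}.$

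The main obstacle I expect is the global arc analysis in the second paragraph: ruling out closed-loop components, pinning down the limit behaviour of each arc inside $\mathbb{B}_\mathscr{R}(x^*),$ and using strict monotonicity of $\|\cdot-x^*\|^2$ to conclude that \emph{every} arc of $\GAMMA$ in $\Int{\mathbb{B}_\mathscr{R}(x^*)}$ is anchored at $x^*.$ This is precisely the step that leverages the pointwise, local Condition~\ref{con::curve}(iii) into the genuinely global statements (ii) and (iii) of Definition~\ref{def::faithr}.
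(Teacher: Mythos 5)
Your proposal is correct, and it reaches the conclusion by a genuinely more global route than the paper, although the two arguments share the same key computation. The paper's proof is a pair of short reductio arguments: for connectedness it supposes a component $\mathcal{C}$ of $\GAMMA\cap\mathbb{B}_R(x^*)$ misses $x^*$, minimizes $\|x-x^*\|^2$ on $\mathcal{C}$, and contradicts Condition~\ref{con::curve}(iii) at the minimizer; for condition (iii) of Definition~\ref{def::faithr} it supposes $f=f(x^*)$ at some $v\ne x^*$, takes a path from $x^*$ to $v$ inside the (already connected) set, locates an interior local extremum $u$ of $f$ on the path, and derives $0=\frac{d(f\circ\phi)}{dt}(0)=\frac{\mu(0)}{2}\frac{d\|\phi\|^2}{dt}(0)$ with $\mu(0)\neq 0$, contradicting Condition~\ref{con::curve}(iii) again. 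You instead establish the full structural picture: $\GAMMA\cap\Int{\mathbb{B}_{\mathscr{R}}(x^*)}\setminus\{x^*\}$ is a union of finitely many arcs, each anchored at $x^*$ and exiting through the sphere, along which both $\|\cdot-x^*\|^2$ and $f$ are strictly monotone. This buys more than the theorem asks for --- it essentially also proves the ``if'' directions of Theorem~\ref{th::con} --- but at the cost of topological bookkeeping the paper never needs: existence of limits at the ends of each arc, exclusion of closed loops and of interior accumulation points, and a global regular semialgebraic parametrization of each component. Your identity $\frac{d(f\circ\phi)}{dt}=\frac{\mu}{2}\frac{d\|\phi-x^*\|^2}{dt}$ and the observation $\mu\neq 0$ (via Condition~\ref{con::curve}(i)--(ii)) are exactly the paper's, just deployed along whole arcs rather than at a single contradiction point.

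Two places deserve tightening, though neither is a genuine gap. First, Condition~\ref{con::curve}(iii) only supplies, at each point, a differentiable map whose image is $\GAMMA\cap\mathcal{O}_u$ with nonzero derivative of $\|\phi\|^2$ at $t=0$; promoting this to a global one-dimensional $C^1$ manifold with \emph{everywhere} nonvanishing differential of $\|\cdot-x^*\|^2$ under a single regular parametrization of each component requires an argument (local injectivity from $\phi'(0)\neq 0$, the classification of connected $1$-manifolds, and semialgebraicity to get end limits); you rightly flag this as the main obstacle, but it should be written out. Second, the continuity of $\mu$ along an arc is used to get constant sign: this follows from uniqueness of the normalized multipliers under Condition~\ref{con::curve}(ii) together with a boundedness/compactness argument (if the multipliers along $x_k\to x$ were unbounded, normalizing and passing to the limit would violate the linear independence at $x$, since $J(x_k)\subseteq J(x)$ for large $k$); this step is asserted but not justified in your write-up.
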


\begin{proof}
Without loss of generality, we assume $x^* = 0.$
We first show that Condition~\ref{con::curve} (iii) implies that
$\GAMMA\cap\mathbb{B}_R$ is connected.  Otherwise, there is a connected component $\mathcal{C}$ such that
$\bfz\not\in\mathcal{C}$.  Since $\GAMMA\cap\mathbb{B}_R$ is closed,
$\mathcal{C}$ is closed by Proposition~\ref{prop1}.  Then, the
function $\Vert x\Vert^2$ reaches its minimum on $\mathcal{C}$ at a
minimizer $u\in\mathcal{C}$.  By the assumption, there exist a
neighborhood $\mathcal{O}_u$ of $u$ and a differentiable mapping $\phi
\colon (-\epsilon,\epsilon)\rightarrow\RR^n$ such that
$\phi((-\epsilon,\epsilon))=\GAMMA\cap\mathcal{O}_u$ and $\phi(0)=u.$
By choosing $\epsilon$ small enough, we may assume that
$\phi((-\epsilon,\epsilon))\subseteq\mathcal{C}\cap\mathcal{O}_u.$
Then, the function $\Vert\phi\Vert^2$ reaches its local minimum at
$0,$ which contradicts Condition \ref{con::curve} (iii).  Hence,
$\GAMMA\cap\mathbb{B}_R$ is connected.

Now assume to the contrary that there exists $\bfz\neq v\in \GAMMA\cap
\{x\in\RR^n\mid f(x) = 0\} \cap \mathbb{B}_R.$ Since $\GAMMA\cap\mathbb{B}_R$ is
connected, there exists a path connecting $\bfz$ and $v$.  Then, $f$
has a local extremum on a relative interior of this path, say $u.$ By
the assumption, there exists a differentiable and semialgebraic
mapping $\phi$ on $(-\epsilon,\epsilon)$ as described in
Condition~\ref{con::curve} (iii).
Then the differentiable function $f \circ \phi$ reaches a
local extremum at $0.$ By the mean value theorem, $$0=\frac{d
(f\circ\phi)}{d t}(0).$$ On the other hand, by Condition~\ref{con::curve} (ii) and (iii), there exist semialgebraic functions
$\lambda_i(t)$, $\nu_j(t)$, $\mu(t)$, $t \in (-\epsilon, \epsilon),$ such
that
\begin{enumerate}
	\item [(e1)] $\nabla f(\phi(t)) - \sum_{i = 1}^{l} \lambda_i(t)
		\nabla g_i(\phi(t)) - \sum_{j = 1}^{m}\nu_j (t) \nabla
		h_j(\phi(t)) - \mu(t) \phi(t) \equiv  0;$
	\item [(e2)] $\nu_j(t) h_j(\phi(t)) \equiv  0, j = 1, \ldots, m.$
\end{enumerate}
As is shown in the proof of Lemma~\ref{property} (iii), we may assume that
$$\nu_j(t) \frac{d}{ dt }(h_j \circ \varphi)(t)  \equiv 0,
\quad j = 1, \ldots, m.$$
It follows from (e1) that
\begin{eqnarray*}
0  &=& \langle\nabla f(\phi(t)), \frac{d\phi(t)}{dt}  \rangle -
\sum_{i = 1}^{l} \lambda_i(t) \langle \nabla  g_i(\phi(t)), \frac{d
\phi(t)}{dt} \rangle - \sum_{j = 1}^{m} \nu_j(t) \langle \nabla
h_j(\phi(t)), \frac{d \phi(t)}{dt} \rangle \\
&&-  \mu (t) \langle \phi(t), \frac{d \phi(t)}{dt} \rangle \\
&=&  \frac{d}{dt}(f \circ \phi)(t) -\sum_{i = 1}^{l}  \lambda_i(t)
\frac{d}{dt}(g_i \circ \phi)(t) - \sum_{j = 1}^{m} \nu_j(t)
\frac{d}{dt}(h_j \circ \phi)(t) - \frac{\mu(t)}{2} \frac{d
\|\phi(t)\|^2}{dt} \\
&=& \frac{d}{dt}(f \circ \phi)(t)-\frac{\mu (t)}{2} \frac{d
\|\phi(t)\|^2}{dt}.
\end{eqnarray*}
Let $t$ tend to $0$, it follows that
\[
0=\frac{d}{dt}(f \circ \phi)(0)=\frac{\mu (0)}{2} \frac{d \|\phi\|^2}{dt}(0).
\]
Since $\phi(0) \not \in \Sigma(f, S)$ by Condition~\ref{con::curve}
(i),  we have $\mu(0)\neq 0$ and so
\[
\frac{d \Vert\phi\Vert^2}{d t}(0)=0,
\]
which contradicts Condition \ref{con::curve} (iii).
Therefore $\GAMMA\cap \{x\in\RR^n\mid f(x) = 0\}
\cap\mathbb{B}_R=\{\bfz\},$ and so $R$ is a faithful radius of
$\bfz$.
\end{proof}

\subsection{On types of isolated KKT points}
For each $R > 0$, let
\begin{equation}\label{eq::fmin}
	\begin{aligned}
		f_R^{\min} &:=& \min\{f(x)\mid x\in S\cap\mathbb{B}_R(x^*)\}, \\
		f_R^{\max} &:=& \max\{f(x)\mid x\in S\cap\mathbb{B}_R(x^*)\}.
	\end{aligned}
\end{equation}

\begin{proposition}\label{coro}
For any $R \in \Bbb R_+,$ we have
\begin{eqnarray*}
f_R^{\min} &=& \min\{f(x)\mid x\in \GAMMA\cap \Bbb{B}_R(x^*)\}, \\
f_R^{\max} &=& \max\{f(x)\mid x\in \GAMMA\cap \Bbb{B}_R(x^*)\}.
\end{eqnarray*}
\end{proposition}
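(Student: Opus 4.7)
The plan is to establish each equality by two inequalities: one direction is immediate from $\GAMMA\subseteq S$, and the other follows from the Fritz--John necessary optimality conditions applied to the minimization (respectively maximization) of $f$ on the compact set $S\cap \mathbb{B}_R(x^*)$, with the ball constraint $R^2-\|x-x^*\|^2\ge 0$ treated as an additional inequality constraint.

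First, since $\GAMMA\subseteq S$ by Definition~\ref{def::tangency}, I would immediately obtain
\[
\min\{f(x)\mid x\in \GAMMA\cap\mathbb{B}_R(x^*)\}\ \ge\ f_R^{\min}\quad\text{and}\quad \max\{f(x)\mid x\in \GAMMA\cap\mathbb{B}_R(x^*)\}\ \le\ f_R^{\max},
\]
provided the sets on the left are nonempty. Nonemptiness is automatic from $x^*\in\GAMMA$: taking $\kappa=\lambda_i=\nu_j=0$ and $\mu=1$, the identity in Definition~\ref{def::tangency} becomes $-\mu(x^*-x^*)=0$, which holds, and the nontriviality condition is satisfied.

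For the reverse inequalities, I would exploit that $S\cap\mathbb{B}_R(x^*)$ is closed and bounded, hence compact, so $f$ attains its extrema there. Let $\bar{x}$ be a minimizer. Applying Fritz--John's theorem to the problem of minimizing $f$ subject to $g_i(x)=0$, $h_j(x)\ge 0$, and $R^2-\|x-x^*\|^2\ge 0$, there exist multipliers $\kappa\ge 0$, $\lambda_i\in\RR$, $\nu_j\ge 0$, $\tau\ge 0$, not all zero, such that $\nu_j h_j(\bar{x})=0$ and
\[
\kappa\,\nabla f(\bar{x}) - \sum_{i=1}^{l}\lambda_i\,\nabla g_i(\bar{x}) - \sum_{j=1}^{m}\nu_j\,\nabla h_j(\bar{x}) + 2\tau(\bar{x}-x^*) \ =\ 0.
\]
Setting $\mu:=-2\tau$ converts this into precisely the defining identity of $\GAMMA$. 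Since Definition~\ref{def::tangency} imposes \emph{no} sign restrictions on $\kappa$, $\nu_j$, or $\mu$, we conclude $\bar{x}\in\GAMMA\cap\mathbb{B}_R(x^*)$, and hence $\min\{f(x)\mid x\in\GAMMA\cap\mathbb{B}_R(x^*)\}\le f(\bar{x})=f_R^{\min}$.

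The argument for $f_R^{\max}$ is symmetric: I would apply the same Fritz--John reasoning to $-f$, producing a tuple of multipliers which, after uniform negation, again satisfies the tangency conditions and places the maximizer in $\GAMMA\cap\mathbb{B}_R(x^*)$. The only mild obstacle in the whole proof is bookkeeping of signs, namely checking that the sign-agnostic formulation of $\GAMMA$ absorbs the Fritz--John stationarity condition (with its fixed sign conventions $\kappa,\nu_j,\tau\ge 0$) for both minima and maxima; once this is observed, the conclusion is automatic.
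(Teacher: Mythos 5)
Your proposal is correct and follows exactly the route the paper takes: the paper's proof is the single line ``this follows immediately from the Fritz--John necessary optimality conditions,'' and your argument is precisely the expansion of that line (add the ball constraint, extract a Fritz--John point, and note that the sign-free multipliers in Definition~\ref{def::tangency} absorb the Fritz--John sign conventions for both the minimizer and the maximizer). The sign bookkeeping and the nonemptiness check via $x^*\in\GAMMA$ are both handled correctly.
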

\begin{proof}
This follows immediately from the Fritz-John necessary optimality conditions (see, for example, \cite{Bertsekas2016}).
\end{proof}

\begin{remark}\label{rmk::type}{\rm
Assume that {\rm (LICQ)} holds at $x^*$ and $x^*$ is an isolated KKT
point.  By Corollary~\ref{cor::notconstant}, the following statements hold:
\begin{enumerate}
\item[(i)] If $x^*$ is a local minimizer of $f$ on $S$, then there is a $R > 0$:
\begin{eqnarray*}
f_R^{\max} \ > \ f(x^*) \ = \ f_R^{\min}.
\end{eqnarray*}
\item[(ii)] If $x^*$ is a local maximizer of $f$ on $S$, then there is a $R > 0$:
\begin{eqnarray*}
f_R^{\max} \ = \ f(x^*) \ > \ f_R^{\min}.
\end{eqnarray*}
\item[(iii)]  If $x^*$ is not an extremum point of $f$ on $S$, then for any $R > 0,$
\begin{eqnarray*}
f_R^{\max} \ > \ f(x^*) \ > \ f_R^{\min}.
\end{eqnarray*}
\end{enumerate}}
\end{remark}
Conversely, the next theorem shows that the type of $x^*$ can be determined by
the global extrema of $f$ over the intersection of
$\GAMMA$ and the ball centered at $x^*$ with a faithful radius.
\begin{theorem}\label{th::main}
Assume that {\rm (LICQ)} holds at $x^*.$
Suppose that $R\in \Bbb R_+$ is a faithful radius of $f$ on $S$ at $x^*,$ then the following statements hold$:$
\begin{enumerate}[\upshape (i)]
\item the point $x^*$ is a local minimizer of $f$ on $S$ if and only
	if $f_{R}^{\rm max} > f(x^*) = f_{R}^{\rm min};$
\item the point $x^*$ is a local maximizer of $f$ on $S$ if and only
	if $f_{R}^{\rm max} = f(x^*) > f_{R}^{\rm min};$
\item the point $x^*$ is not an extremum point of $f$ on
	$S$ if and only if $f_{R}^{\max} > f(x^*) > f_{R}^{\min}.$
\end{enumerate}
\end{theorem}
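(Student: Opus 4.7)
The plan is to prove the three reverse (``$\Leftarrow$'') implications directly, and then deduce the three forward directions by a mutual exclusion argument. Throughout, fix a faithful radius $R$ at $x^*$. The ``$\Leftarrow$'' directions of (i) and (ii) are immediate from the definitions: if $f_R^{\min} = f(x^*)$, then $f(x) \ge f(x^*)$ for every $x \in S \cap \mathbb{B}_R(x^*)$, so taking $U = \Int{\mathbb{B}_R(x^*)}$ exhibits $x^*$ as a local minimizer; case (ii) is symmetric.

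The substantive step is the ``$\Leftarrow$'' of (iii). Assume $f_R^{\max} > f(x^*) > f_R^{\min}$. By Proposition~\ref{coro}, the extrema are attained on the compact set $\GAMMA \cap \mathbb{B}_R(x^*)$ at some points $x_{\max}$ and $x_{\min}$. By faithful-radius condition (ii) combined with Proposition~\ref{prop1}, the set $\GAMMA \cap \mathbb{B}_R(x^*)$ is path-connected, so I can pick a continuous path $\gamma \colon [0,1] \to \GAMMA \cap \mathbb{B}_R(x^*)$ with $\gamma(0) = x^*$ and $\gamma(1) = x_{\max}$. Set $t_0 := \sup \gamma^{-1}(\{x^*\})$; by continuity $\gamma(t_0) = x^*$, while $\gamma(t) \neq x^*$ for every $t \in (t_0, 1]$. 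Faithful-radius condition (iii) then forces $f(\gamma(t)) \neq f(x^*)$ throughout $(t_0, 1]$, so the continuous image $(f \circ \gamma)((t_0, 1])$ is a connected subset of $\mathbb{R} \setminus \{f(x^*)\}$ containing $f(\gamma(1)) > f(x^*)$; hence $f \circ \gamma > f(x^*)$ on all of $(t_0, 1]$. Since $\gamma(t) \to x^*$ as $t \downarrow t_0$, every open neighborhood $U$ of $x^*$ contains some $v = \gamma(t) \in S \cap U$ with $f(v) > f(x^*)$. The symmetric argument using a path from $x^*$ to $x_{\min}$ produces $u \in S \cap U$ with $f(u) < f(x^*)$, so $x^*$ is not an extremum.

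For the ``$\Rightarrow$'' directions, I argue by exhaustion. The three right-hand conditions are pairwise incompatible, and the only remaining alternative $f_R^{\min} = f(x^*) = f_R^{\max}$ would force $f \equiv f(x^*)$ on $S \cap \mathbb{B}_R(x^*)$, in particular on $\GAMMA \cap \mathbb{B}_R(x^*)$. However, Corollary~\ref{cor::notconstant} gives that $f|_S$ is nonconstant near $x^*$, so Lemma~\ref{property}~(iii) provides points of $\GAMMA \setminus \Sigma(f,S)$ accumulating at $x^*$; any such point lying in $\mathbb{B}_R(x^*) \setminus \{x^*\}$ must satisfy $f \neq f(x^*)$ by faithful-radius condition (iii), a contradiction. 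Thus exactly one of (i), (ii), (iii) holds. The three types of $x^*$ are likewise mutually exclusive (Corollary~\ref{cor::notconstant} precludes $x^*$ being simultaneously a local minimum and maximum) and exhaustive. Combined with the established ``$\Leftarrow$'' implications, this pigeonhole argument yields each ``$\Rightarrow$''. The main technical subtlety is the path argument in case (iii), where the device $t_0 = \sup \gamma^{-1}(\{x^*\})$ is needed to handle paths that may revisit $x^*$ before departing toward $x_{\max}$.
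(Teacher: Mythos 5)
Your proof is correct and follows essentially the same route as the paper: the ``if'' parts of (i)--(ii) are immediate, the ``if'' part of (iii) uses path-connectedness of $\GAMMA\cap\mathbb{B}_R(x^*)$ together with faithful-radius condition (iii) to propagate the sign of $f-f(x^*)$ along a path back to $x^*$, and the ``only if'' parts follow by trichotomy. The only (harmless) variations are that you handle paths revisiting $x^*$ via $t_0=\sup\gamma^{-1}(\{x^*\})$ where the paper invokes the Monotonicity Lemma, and you fold the necessity of (iii) into the exhaustion argument where the paper reads it off directly from the definition of ``not an extremum point.''
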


\begin{proof}
By Remark~\ref{rmk::type}, (i) and (ii) are clear if we can prove (iii).

(iii) {\em Necessity.} This is clear by Remark \ref{rmk::type}.

{\em Sufficiency.} By Proposition~\ref{coro}, there exists a point $u \in
\GAMMA\cap\Bbb{B}_R(x^*)$ such that $f(u)=f_R^{\min} <
f(x^*).$ Since $R$ is a faithful radius of $f$ on $S,$ the semialgebraic set $\GAMMA \cap \Bbb B_{R}(x^*)$ is connected, and so, is path connected by
Proposition~\ref{prop1}. Consequently, there exists a continuous and semialgebraic mapping
$\phi \colon [0, 1] \rightarrow \GAMMA \cap \Bbb B_{R}(x^*)$ such that
$\phi(0) = x^*$ and $\phi(1) = u.$ Thanks to the Monotonicity Lemma~\ref{MonotonicityLemma}, we may assume that $\phi(t) \ne x^*$ for all $t \in (0, 1).$ We have
$f(\phi(t)) < f(x^*)$ for all $t \in (0, 1].$ Otherwise, by the
continuity, there exists $\bar{t} \in (0, 1)$ such that
$f(\phi(\bar{t})) = f(x^*).$ Since the radius $R$ is faithful, we have
$\phi(\bar{t}) = x^*$ by the definition, a contradiction.

Similarly, let $f_R^{\max} > f(x^*)$ be reached at $v \in \GAMMA \cap
\Bbb{B}_R(x^*).$ Then there exists a continuous and semialgebraic
mapping $\psi \colon [0, 1] \rightarrow \GAMMA \cap \Bbb B_{R}(x^*)$
such that $x^* \not \in \psi((0, 1)),$ $\psi(0) = x^*,$ $\psi(1) = v$
and $f(\psi(t)) > f(x^*)$ for all $t \in (0, 1].$ Therefore, $x^*$ is not an extremum point of $f.$
\end{proof}

Remark that computing the extrema $f_R^{\min}$ and $f_R^{\max}$ in
$(\ref{eq::fmin})$ is NP-hard (c.f. \cite{RW}). Moreover,
in practice, it is difficult to certify the equalities in Theorem
\ref{th::main} due to numerical errors.
For any $R\in\RR_+$, comparing with Proposition \ref{coro}, define
\begin{eqnarray*}
		f_R^{-} &:=& \min\{f(x)\mid x\in \GAMMA\cap \mathbb{S}_R(x^*)\},\\
		f_R^{+} &:=& \max\{f(x)\mid x\in \GAMMA\cap
			\mathbb{S}_R(x^*)\},
\end{eqnarray*}
where $\mathbb{S}_R(x^*)=\{x\in\RR^n\mid \Vert x-x^*\Vert^2=R^2\}$.
Then, we have the following criterion to determine the type of $x^*$.

\begin{theorem}\label{th::con}
	Suppose that $\mathscr{R}\in\RR_+$ satisfies Condition \ref{con::curve}.
	Then for any $0<R<\mathscr{R},$ it holds that
\begin{enumerate}[\upshape (i)]
	\item the point $x^*$ is a local minimizer if and only if
		$f_R^{-}>f(x^*);$
\item the point $x^*$ is a local maximizer if and only if
	$f_R^{+}<f(x^*);$
\item the point $x^*$ is not an extremum point if and only if
	$f_R^{+}>f(x^*)>f_R^{-}.$
\end{enumerate}
\end{theorem}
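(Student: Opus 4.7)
The plan is to reduce Theorem~\ref{th::con} to Theorem~\ref{th::main} by rewriting the sphere extrema $f_R^{\pm}$ in terms of the ball extrema $f_R^{\min},f_R^{\max}$. As a preliminary step, since $0<R<\mathscr{R}$ and $\mathscr{R}$ satisfies Condition~\ref{con::curve}, Theorem~\ref{th::main2} makes $R$ a faithful radius of $f$ on $S$ at $x^*$, so Theorem~\ref{th::main} is at our disposal.

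The heart of the argument will be the pair of identities
\[
f_R^{\min}=\min\{f(x^*),\,f_R^{-}\}\qquad\text{and}\qquad f_R^{\max}=\max\{f(x^*),\,f_R^{+}\}.
\]
To prove the first (the second being symmetric), I would use Proposition~\ref{coro} to pick a point $u\in\GAMMA\cap\mathbb{B}_R(x^*)$ at which $f_R^{\min}$ is attained. If $u=x^*$ the identity is immediate, since $\GAMMA\cap\mathbb{S}_R(x^*)\subseteq S\cap\mathbb{B}_R(x^*)$ already gives $f_R^{-}\ge f(x^*)$. The crucial case is $u\ne x^*$: I must rule out $u\in\Int{\mathbb{B}_R(x^*)}$. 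Were that so, $u$ would be a local minimizer of $f$ on $S$ at which {\rm (LICQ)} holds (Condition~\ref{con::curve}(ii) forces $\nabla g_i(u), \nabla h_j(u), j\in J(u)$, to be linearly independent), so Theorem~\ref{th::KKT} would put $u\in\Sigma(f,S)\cap\mathbb{B}_{\mathscr{R}}(x^*)\setminus\{x^*\}$, contradicting Condition~\ref{con::curve}(i). Hence $u\in\mathbb{S}_R(x^*)$, giving $f_R^{-}\le f(u)=f_R^{\min}$; combined with the automatic reverse inequality this yields $f_R^{-}=f_R^{\min}\le f(x^*)$, completing the identity.

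Next I would derive the strict inequalities $f_R^{\pm}\ne f(x^*)$ from Definition~\ref{def::faithr}(iii): any $x\in\GAMMA\cap\mathbb{S}_R(x^*)$ satisfies $\|x-x^*\|=R>0$, so $x\ne x^*$, which excludes $x$ from $\GAMMA\cap\{f=f(x^*)\}\cap\mathbb{B}_R(x^*)=\{x^*\}$ and forces $f(x)\ne f(x^*)$. The degenerate case $\GAMMA\cap\mathbb{S}_R(x^*)=\emptyset$ cannot occur either: the identities above would then collapse to $f_R^{\min}=f_R^{\max}=f(x^*)$, making $f$ locally constant on $S$ near $x^*$ and contradicting Corollary~\ref{cor::notconstant}.

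Putting the ingredients together with Theorem~\ref{th::main}: for (i), $x^*$ is a local minimizer iff $f_R^{\max}>f(x^*)=f_R^{\min}$; by the identities this unpacks to $f_R^{+}>f(x^*)$ and $f_R^{-}\ge f(x^*)$, which by the strict inequality step sharpens to $f_R^{+}>f(x^*)$ and $f_R^{-}>f(x^*)$, and then collapses to the single clause $f_R^{-}>f(x^*)$ since $f_R^{+}\ge f_R^{-}$. Statements (ii) and (iii) follow by the analogous translation, using $f_R^{-}\le f_R^{+}$ throughout. I expect the main obstacle to be the dichotomy in the first identity: the combination of the linear-independence condition and the isolation of $x^*$ in Condition~\ref{con::curve} is what forces any nontrivial extremizer of $f$ over the closed ball onto its bounding sphere, and without both parts the reduction to Theorem~\ref{th::main} breaks down.
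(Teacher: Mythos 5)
Your proof is correct, but its key step takes a genuinely different route from the paper's. The paper disposes of the nontrivial ``if'' directions of (i) and (ii) by a connectivity argument on the tangency variety: using Condition \ref{con::curve}(iii), every $u\in\GAMMA\cap\mathbb{B}_R(x^*)\setminus\{x^*\}$ is joined by a path inside $\GAMMA$ to the sphere $\mathbb{S}_R(x^*)$, and since $f-f(x^*)$ never vanishes on $\GAMMA\cap\mathbb{B}_R(x^*)\setminus\{x^*\}$ (faithfulness), the sign of $f(u)-f(x^*)$ is forced by its sign on the sphere; this propagates $f_R^->f(x^*)$ inward to give $f_R^{\max}>f(x^*)=f_R^{\min}$ and then invokes Theorem \ref{th::main}. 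You instead establish the identities $f_R^{\min}=\min\{f(x^*),f_R^-\}$ and $f_R^{\max}=\max\{f(x^*),f_R^+\}$ by showing that an extremizer of $f$ over $S\cap\mathbb{B}_R(x^*)$ lying in the open ball and distinct from $x^*$ would be a local extremizer of $f$ on $S$ at which (LICQ) holds (a subfamily of the linearly independent family in Condition \ref{con::curve}(ii)), hence a KKT point by Theorem \ref{th::KKT}, contradicting the isolation radius of Condition \ref{con::curve}(i). This is more elementary: it bypasses the curve structure of Condition \ref{con::curve}(iii) for the crucial step, using faithfulness only to get the strict inequalities $f_R^{\pm}\neq f(x^*)$, and it explicitly rules out the degenerate case $\GAMMA\cap\mathbb{S}_R(x^*)=\emptyset$ via Corollary \ref{cor::notconstant}, a point the paper's proof passes over in silence. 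What the paper's route buys is uniformity with the arguments of \cite{GP2017}; what yours buys is a cleaner reduction of sphere extrema to ball extrema and hence a purely formal translation of Theorem \ref{th::main}. Both versions rest on Theorem \ref{th::main2} to make $R$ a faithful radius and both implicitly carry the standing assumption that (LICQ) holds at $x^*$ itself, which Theorem \ref{th::main} requires.
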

\begin{proof}
	By Theorem \ref{th::main2}, $R$ is a faithful radius of $x^*$.
	According to Theorem \ref{th::main} and Definition
	\ref{def::faithr} (iii),
	the ``only if'' parts in (i), (ii) and the ``if'' part in (iii) are clear.

(i). ``if'' part. For any
	$u\in\GAMMA\cap\mathbb{B}_R(x^*)\backslash\{x^*\}$, by
	Condition~\ref{con::curve} (iii), it is easy to see that $u$ is
	path connected with $\mathbb{S}_R(x^*)$. By continuity and the
	definition of faithful radius, $f(u)>f(x^*)$ which implies that
	$f^{\max}_R>f(x^*)=f^{\min}_R$. For details, see \cite[Proposition
	5.1 and Theorem 5.2]{GP2017}.

Similarly, we can prove (ii) and then (iii) follows.
\end{proof}

Consequently, Theorem \ref{th::con} shows that we need not check
equalities to determine the type of $x^*$ as in Theorem \ref{th::main}.
Moreover, computing
$f_R^{-}$ and $f_R^{+}$ can be reduced to solving a zero-dimensional
polynomial system and the inequalities in Theorem \ref{th::con} can be
certified by real root
isolation of the polynomial system. See Section
\ref{sec::implementations} for details.

\section{Computational Aspects}\label{sec::computation}
In this section, according to the sufficient Condition~\ref{con::curve} and Theorem \ref{th::con}, we give an
algorithm to determine the type of an isolated KKT point
$x^*$ of \eqref{eq::P}.
By adding extra variables $z_j, j=1,\ldots,m$, consider the
equality-constrained problem
\begin{equation}\label{eq::P2}
	\left\{\begin{aligned}
		\min_{(x,z)\in \RR^n \times \RR^m}&\ \ f(x)\\
		\text{s.t.}&\ \ g_1(x)=0, \ldots, g_l(x)=0,\\
		&\ \ h_1(x)-z_1^2=0, \ldots, h_m(x)-z_m^2= 0.
	\end{aligned}\right.
\end{equation}
Then it is easy to see that (LICQ) holds at $x^*$ and $x^*$ is an isolated KKT point of $f$ in \eqref{eq::P} if and only if (LICQ) holds at $(x^*, z^*)$ and $(x^*, z^*)$ is an isolated KKT point of $f$ in \eqref{eq::P2}, where $z_j^* = h_j(x^*), j=1,\ldots,m.$ Furthermore, $x^*$ is a local minimizer (resp., maximizer or not extremum point) of \eqref{eq::P}  if and only if $(x^*, z^*)$ is a local minimizer (resp., maximizer or not extremum point) of \eqref{eq::P2}. Hence, without loss of generality, we assume in the following that $S$ is
defined by equalities only, i.e.,
\[
	S:=\{x\in\RR^n\mid g_1(x)=\cdots=g_l(x)=0\}.
\]

\subsection{Algorithm}
Recall that $l\le n-1$ by Remark \ref{rk::licq} (ii).
Let $\mathcal{I}_\Sigma$ be the ideal in $\RR[x]$ generated by the
union of $\{g_1,\ldots,g_l\}$ and the set of maximal minors of
\[
	\begin{aligned}
		&\left[
			\begin{array}{cccc}
				\nabla f(x)& \nabla g_1(x)& \cdots& \nabla g_l(x)
		\end{array}\right].\\
	\end{aligned}
\]
Note that $\Sigma(f,S)\subseteq\bfV_\RR(\mathcal{I}_\Sigma)$ and
$\dim(\mathcal{I}_\Sigma)=0$ is a sufficient condition for the
isolatedness of the KKT point $x^*$.
Similarly, if $l= n-1,$ let $\mathcal{I}_\Gamma :=\langle g_1,\ldots,g_l\rangle$; otherwise, let
$\mathcal{I}_\Gamma$ be the ideal in $\RR[x]$ generated by the union
of $\{g_1,\ldots,g_l\}$ and the set of maximal minors of
\[
	\begin{aligned}
		&\left[
			\begin{array}{ccccc}
				\nabla f(x)& \nabla g_1(x)& \cdots& \nabla g_l(x)&
				x-x^*
		\end{array}\right].\\
	\end{aligned}
\]
Clearly, it holds that $\GAMMA=\bfV_\RR(\mathcal{I}_\Gamma).$
Let $\mathcal{I}_{L}$ be the ideal in $\RR[x]$ generated by the union
of $\{g_1,\ldots,g_l\}$ and the set of maximal minors of
\[
	\begin{aligned}
		&\left[
			\begin{array}{cccc}
				\nabla g_1(x)& \cdots& \nabla g_l(x)&
				x-x^*
		\end{array}\right].\\
	\end{aligned}
\]
By Lemma \ref{includex}, $\bfV_\RR(\mathcal{I}_L)\cap\Bbb B_R
(x^*)=\{x^*\}$ for some $R>0$.
Denote the vanishing ideal $\mathcal{G}:={\bf
I}\left({\overline{\bfV_\CC(\mathcal{I}_\Gamma)\setminus\bfV_\CC(\mathcal{I}_\Sigma)}}^{\mathcal{Z}}\right).$
Theorem \ref{th::dim1} shows that $\dim(\mathcal{G})=1$ up to a generic
linear change of coordinates, which does not change of the type of the
KKT point.

\begin{framed}
\begin{algorithm}\label{al::mainal}{\sf
\noindent Type($f,g_1,\ldots,g_l, x^*$)\\
Input: $f,g_1,\ldots,g_l\in\RR[x]$ with $x^*$ being an isolated KKT
point of $f$ on $S$ defined by $g_i$'s\\
Output: The type of $x^*$ as a KKT point of $f$ over $S$.
\begin{enumerate}[\upshape 1.]
	\item If $\dim(\mathcal{I}_\Gamma)=1$, then let
		$\mathcal{I}=\mathcal{I}_\Gamma$; else if $\dim(\mathcal{G})=1$, then let
		$\mathcal{I}=\mathcal{G}$; otherwise, make a generic linear change of
		coordinates and proceed to step 1;
	\item Compute a $R_1\in\RR_+$ such that
		$\bfV_\RR(\mathcal{I}_\Sigma)\cap \Bbb B_{R_1} (x^*)=\{x^*\}$ and
		$\bfV_\RR(\mathcal{I}_L)\cap \Bbb B_{R_1} (x^*)=\{x^*\}$;
	\item Compute a $R_2\in\RR_+$ such that $\bfV_\RR(\mathcal{I})\cap \Bbb
		B_{R_2} (x^*)\setminus\{x^*\}$ is one-dimensional smooth
		manifold and there is no critical point of the map
\begin{eqnarray*}
	\bfV_\RR(\mathcal{I})\cap \Bbb B_{R_2} (x^*)\setminus\{x^*\} \rightarrow
	\RR, \quad  x \mapsto \|x - x^*\|^2.
\end{eqnarray*}
	\item Fix a positive real number $r<\min\{R_1, R_2\}$. Compare
		$f_r^{-}$ and $f_r^{+}$ with $f(x^*)$, respectively.
	\item If $f_r^{-}>f(x^*)$, return ``local minimizer''; if
		$f_r^{+}<f(x^*)$, return
		``local maximizer''; if $f_r^{-}<f(x^*)<f_r^{+}$, return ``not
		an extremum point''.
\end{enumerate}}
\end{algorithm}
\end{framed}

\begin{theorem}\label{th::al}
Algorithm \ref{al::mainal} runs successfully and is correct. In
particular, any positive
real number $r<\min\{R_1, R_2\}$ is a faithful radius of $x^*$
satisfying Condition \ref{con::curve}.
\end{theorem}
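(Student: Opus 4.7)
The plan is to deduce both termination and correctness of Algorithm~\ref{al::mainal} from the single claim that every $0<r<\min\{R_1,R_2\}$ verifies Condition~\ref{con::curve}; once this is established, Theorem~\ref{th::main2} promotes $r$ to a faithful radius and Theorem~\ref{th::con} then certifies that the three-way test in Steps 4--5 returns the correct extremality type. So I only need to justify (a) the termination of Steps~1--3, (b) the three items of Condition~\ref{con::curve}, and (c) the standard algebraic subroutines (Gr\"obner bases of zero-dimensional subsystems and real-root isolation) used inside the steps; (c) can be relegated to the appendix.

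For termination of Step~1 I would invoke the complex version of Corollary~\ref{cor::curve2} announced for Appendix~\ref{sec::correctness}: after a generic $A\in\mathscr{I}^{n\times n}$ the complex variety $\overline{\bfV_\CC(\mathcal{I}_\Gamma)\setminus\bfV_\CC(\mathcal{I}_\Sigma)}^{\mathcal{Z}}$ is pure of complex dimension one, whence $\dim(\mathcal{G})=1$, so the ``proceed to step~1'' loop terminates after at most one change of coordinates. Since the extremality type of $x^*$ is invariant under invertible linear changes of variables, I relabel and assume $\dim(\mathcal{I})=1$ already in the original coordinates. For Step~2, isolatedness of $x^*$ in $\bfV_\RR(\mathcal{I}_\Sigma)$ (which coincides with $\Sigma(f,S)$, since after the reformulation~\eqref{eq::P2} the constraints are equalities only) and in $\bfV_\RR(\mathcal{I}_L)$ (by Lemma~\ref{includex}) produces $R_1$. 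For Step~3, $\bfV_\RR(\mathcal{I})\setminus\{x^*\}$ is a semialgebraic set of dimension at most one, so the Cell Decomposition Theorem~\ref{CellTheorem} writes it as a finite disjoint union of smooth $C^1$-cells and isolated points; shrinking the radius removes the isolated points, and the semialgebraic Sard Theorem applied to the semialgebraic function $x\mapsto\|x-x^*\|^2$ on each remaining one-cell excludes its finitely many critical values, yielding $R_2$.

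The heart of the proof is verifying Condition~\ref{con::curve} for $0<r<\min\{R_1,R_2\}$. Items (i) and (ii) are immediate from the construction of $R_1$ together with the defining ideals $\mathcal{I}_\Sigma$ and $\mathcal{I}_L$. For item (iii) the decisive observation is the set-theoretic identity
\[
\bfV_\RR(\mathcal{I})\cap\mathbb{B}_r(x^*)\setminus\{x^*\}\;=\;\GAMMA\cap\mathbb{B}_r(x^*)\setminus\{x^*\},
\]
valid for either branch $\mathcal{I}=\mathcal{I}_\Gamma$ or $\mathcal{I}=\mathcal{G}$: one inclusion follows from $\mathcal{G}\supseteq\mathcal{I}_\Gamma$ (hence $\bfV_\RR(\mathcal{G})\subseteq\bfV_\RR(\mathcal{I}_\Gamma)=\GAMMA$); the other uses $\bfV_\RR(\mathcal{I}_\Sigma)\cap\mathbb{B}_r(x^*)=\{x^*\}$ to write $\GAMMA\cap\mathbb{B}_r(x^*)\setminus\{x^*\}=\bigl(\bfV_\RR(\mathcal{I}_\Gamma)\setminus\bfV_\RR(\mathcal{I}_\Sigma)\bigr)\cap\mathbb{B}_r(x^*)\setminus\{x^*\}\subseteq\bfV_\RR(\mathcal{G})$. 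With this identity in hand, the smooth 1-manifold parametrization supplied by Step~3 for $\bfV_\RR(\mathcal{I})$ is simultaneously a parametrization of a neighborhood of any $u\in\GAMMA\cap\Int{\mathbb{B}_r(x^*)}\setminus\{x^*\}$, and the absence of critical points of $\|\cdot-x^*\|^2$ forces $\tfrac{d\|\phi\|^2}{dt}(0)\neq 0$, as required.

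The main obstacle is precisely this identity: the whole purpose of replacing $\mathcal{I}_\Gamma$ by $\mathcal{G}$ when $\dim(\mathcal{I}_\Gamma)>1$ is to shave off the spurious components of $\bfV_\CC(\mathcal{I}_\Gamma)$ contained in $\bfV_\CC(\mathcal{I}_\Sigma)$, and one has to check that no genuine point of $\GAMMA\setminus\{x^*\}$ is lost in the passage to the Zariski closure---this is what the equality-constraint reformulation~\eqref{eq::P2} is for, since otherwise $\bfV_\RR(\mathcal{I}_\Sigma)$ could properly contain $\Sigma(f,S)$ and the argument above would collapse. Once this identity and the complex one-dimensionality statement are in place, the remaining computational tasks---extracting $R_1$ and $R_2$ from effective degree and height bounds, and computing $f_r^{\pm}$ as the extrema of $f$ on the zero-dimensional set $\bfV_\RR(\mathcal{I})\cap\mathbb{S}_r(x^*)$ via real-root isolation of a univariate polynomial---are routine and may be handled in the appendix together with the complexity analysis.
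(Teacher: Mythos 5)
Your proposal is correct and follows essentially the same route as the paper: Step 1 terminates by the complex one-dimensionality statement (Theorem~\ref{th::dim1}), your key local identity $\bfV_\RR(\mathcal{I})\cap\mathbb{B}_r(x^*)\setminus\{x^*\}=\GAMMA\cap\mathbb{B}_r(x^*)\setminus\{x^*\}$ is exactly the content of Proposition~\ref{prop::r}, and the conclusion then follows from Theorems~\ref{th::main2} and~\ref{th::con}. One small repair: $\bfV_\RR(\mathcal{I}_\Sigma)$ need not coincide globally with $\Sigma(f,S)$ merely because the constraints are equalities (the maximal minors also vanish wherever the $\nabla g_i$ themselves are linearly dependent); the coincidence you need is only local near $x^*$ and follows from Lemma~\ref{regular1}, which is how the paper argues.
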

\begin{proof}
	See Appendix \ref{sec::correctness}.
\end{proof}

\subsection{Implementations}\label{sec::implementations}
Now we show
some strategies to implement \textsl{Step 2, 3} and \textsl{4} in
Algorithm \ref{al::mainal}.
We remark that the way to implement each step is not unique, while the
ones are specified below in order to facilitate the complexity
discussions in Section
\ref{sec::complexity}. We use the following subroutines from the
literature.

\begin{enumerate}[-]
	\item {\sf Num}\cite[Algorithm 10.14 and 10.15]{ARG}: For
		univariate polynomials $u,v\in\RR[t]$, {\sf Num}($u,v$)
		returns the number of elements in the set $\{u(t)>0\mid
		t\in\RR, v(t)=0\}$.
	\item {\sf RURr}\cite[Sec. 5.1]{Rouillier1999}: For an ideal
		$I\subset\RR[x]$ with $\dim(I)=0$, {\sf RURr}($I$)
	returns the rational univariate
	representation (RUR) of the points in $\bfV_\RR(I)$, i.e.
	univariate polynomials $v_0,v_i,u_i\in\RR[t]$, $i=1,\ldots,n$,
	such that $x\in\bfV_\RR(I)$ if and only if $v_0(t)=0$,
	$x_i=\frac{u_i(t)}{v_i(t)}$, $i=1,\ldots,n$, for some $t\in\RR$.
\item {\sf AlgSamp}\cite[Algorithm 12.16]{ARG}: For $p\in\RR[x]$ with
	$p(x)\ge 0$ on $\RR^n$ and
	$\bfV_\RR(p)$ bounded, {\sf AlgSamp}($p$) returns the rational univariate
	representation of a set of points which meets every connected
	component of $\bfV_\RR(p)$.
\end{enumerate}

\noindent {\textsl{Step 2}}:
If
$\dim(\mathcal{I}_\Sigma)=\dim(\mathcal{I}_L)=0$, then consider the
ideals
$\widetilde{\mathcal{I}}_\Sigma=\mathcal{I}_\Sigma+\langle\Vert x-x^*\Vert^2-x_{n+1}\rangle$
and
$\widetilde{\mathcal{I}}_L=\mathcal{I}_L+\langle\Vert
x-x^*\Vert^2-x_{n+1}\rangle$. Apply the subroutine {\sf RURr} on
$\widetilde{\mathcal{I}}_\Sigma$ and obtain
$v_0,v_i,u_i\in\RR[t]$, $i=1,\ldots,n+1$. Choose a number
$R_\Sigma>0$ such that {\sf Num}($u_{n+1}v_{n+1},v_0)=${\sf
Num}($u_{n+1}v_{n+1}-R^2_\Sigma v^2_{n+1},v_0$) which holds for all
$R_\Sigma>0$ small enough. In the same way,
choose a number $R_L>0$ for $\widetilde{\mathcal{I}}_L$. Then, we can
set $R_1=\min\{R_\Sigma, R_L\}$.

Otherwise,
$R_1$ may be obtained by computing the distance of each connected component of
$\bfV_\RR(\mathcal{I}_\Sigma)$ and $\bfV_\RR(\mathcal{I}_L)$ to $x^*$
using the critical point method proposed in \cite{AUBRY2002543}.
We refer the readers to \cite[Section 4.3]{GP2017} for the details.

\vskip 5pt

\noindent {\textsl{Step 3:}}
Compute the radical ideal $\sqrt{\mathcal{I}}=\langle
\phi_1,\ldots,\phi_t\rangle$.
Denote $\mathscr{D}$ as the set of the determinants of the Jacobian
matrices $\jac\left(\phi_{i_1},\ldots,\phi_{i_{n-1}},\Vert x-x^*\Vert^2\right)$
for all $\{i_1,\ldots,i_{n-1}\}\subset\{1,\ldots,t\}$. (Note that $t
\ge n - 1$ because $\dim \mathcal{I} = 1.$)
Define
$\Delta_{\mathcal{I}}:=\{\phi_1,\ldots,\phi_t\}\cup\mathscr{D}$ and
\[
	\begin{aligned}
		\mathscr{R}_{\mathcal{I}}&:=\inf\{r\in\RR_+\backslash\{0\}\mid
		\exists x\in\bfV_\RR(\Delta_{\mathcal{I}}), \text{s.t. }
	\Vert x-x^*\Vert^2=r^2\},\\
\end{aligned}
\]
Note that
$\mathscr{R}_{I}$ are set
to be $\infty$ for convenience if no such $x$ in the definition exists.

\begin{proposition}\label{pro::dim}
	$\mathscr{R}_\mathcal{I}>0$ and any number
	$R_2\in(0,\mathscr{R}_\mathcal{I})$ satisfies the condition in
	\textsl{Step 3}.
\end{proposition}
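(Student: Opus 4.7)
The plan is to interpret $\bfV_\RR(\Delta_\mathcal{I})$ geometrically as the union of the singular locus of $\bfV_\RR(\mathcal{I})$ with the critical points of $\|\cdot-x^*\|^2$ restricted to its smooth part, then use the Curve Selection Lemma together with a tangent-space argument to show that $x^*$ is isolated in this union, and finally read off both conditions in \textsl{Step 3} from this isolation.

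First I would verify the geometric description of $\bfV_\RR(\Delta_\mathcal{I}).$ Since $\sqrt{\mathcal{I}}=\langle\phi_1,\ldots,\phi_t\rangle$ and $\dim\mathcal{I}=1,$ a point $x\in\bfV_\RR(\mathcal{I})$ is smooth with local dimension one exactly when some $(n-1)$-subset of the $\phi_{i_j}$'s has Jacobian of rank $n-1$ at $x.$ At such a smooth point the simultaneous vanishing of every $\det\jac(\phi_{i_1},\ldots,\phi_{i_{n-1}},\|x-x^*\|^2)$ is the Lagrange condition that $\nabla\|x-x^*\|^2$ lie in the row span of $\jac(\phi_{i_1},\ldots,\phi_{i_{n-1}}),$ i.e.\ that $x$ be a critical point of $\|\cdot-x^*\|^2$ restricted to $\bfV_\RR(\mathcal{I}),$ while at a singular point all such determinants vanish automatically. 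Hence $\bfV_\RR(\Delta_\mathcal{I})$ is exactly the singular locus of $\bfV_\RR(\mathcal{I})$ together with the critical points of the squared-distance function on its smooth part.

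Next I would prove $\mathscr{R}_\mathcal{I}>0.$ Suppose for contradiction that $x^*$ is a cluster point of $\bfV_\RR(\Delta_\mathcal{I})\setminus\{x^*\}.$ By the Curve Selection Lemma~\ref{CurveSelectionLemma} there is a real analytic semialgebraic curve $\phi\colon(0,\epsilon)\to\bfV_\RR(\Delta_\mathcal{I})$ with $\phi(t)\neq x^*$ and $\phi(t)\to x^*$ as $t\to 0^+.$ By Step~1 of Algorithm~\ref{al::mainal}, Corollary~\ref{cor::curve2}, and Theorem~\ref{th::dim1} (to be proved in Appendix~\ref{appendix}), the set $\bfV_\RR(\mathcal{I})\setminus\{x^*\}$ is a smooth one-dimensional manifold in some punctured neighborhood of $x^*;$ hence $\phi(t)$ is a smooth point for small $t,$ and by the description above $\phi(t)$ must be a critical point of $\|\cdot-x^*\|^2.$ Consequently $\phi'(t)$ is tangent to $\bfV_\RR(\mathcal{I})$ and therefore perpendicular to $2(\phi(t)-x^*),$ so $\frac{d}{dt}\|\phi(t)-x^*\|^2\equiv 0;$ combined with $\phi(t)\to x^*$ this forces $\phi\equiv x^*,$ a contradiction.

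Finally, for any $R_2\in(0,\mathscr{R}_\mathcal{I}),$ the ball $\Bbb B_{R_2}(x^*)$ meets $\bfV_\RR(\Delta_\mathcal{I})$ at most at $x^*$ itself, so every point of $\bfV_\RR(\mathcal{I})\cap\Bbb B_{R_2}(x^*)\setminus\{x^*\}$ is simultaneously a smooth point of $\bfV_\RR(\mathcal{I})$ (giving the one-dimensional smooth-manifold structure, as $\dim\mathcal{I}=1$) and a noncritical point of $\|\cdot-x^*\|^2,$ which is exactly the requirement in \textsl{Step 3}. I expect the main obstacle to be the clean transfer of the smoothness-and-dimension statement for $\bfV_\RR(\mathcal{I})\setminus\{x^*\}$ near $x^*$ from the complex results underlying Step~1; without the generic linear change of coordinates guaranteed there, singular points could in principle accumulate at $x^*$ and the Curve Selection argument would collapse.
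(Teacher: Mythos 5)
Your route to $\mathscr{R}_{\mathcal{I}}>0$ (Curve Selection Lemma plus a tangency computation along the selected curve) is genuinely different from the paper's, which stays on the complex side: the paper takes the equidimensional decomposition $\mathcal{I}=\mathcal{I}^{(0)}\cap\mathcal{I}^{(1)}$ and observes that $\bfV_\CC(\Delta_{\mathcal{I}})$ is contained in the union of the finite set $\bfV_\CC(\mathcal{I}^{(0)})$, the finite singular locus of the one-dimensional variety $\bfV_\CC(\mathcal{I}^{(1)})$, and the critical points of $x\mapsto\sum_{i}(x_i-x_i^*)^2$ on $\bfV_\CC(\mathcal{I}^{(1)})$, which realize only finitely many values by Sard's theorem; hence only finitely many values of $\|x-x^*\|^2$ occur on $\bfV_\RR(\Delta_{\mathcal{I}})$ and the positive ones are bounded away from $0$. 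For the second statement the paper simply cites \cite[Lemma 4.10 and Theorem 4.11]{GP2017}.

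However, your argument has a genuine gap at the step ``$\phi(t)$ is a smooth point for small $t$, and by the description above $\phi(t)$ must be a critical point of $\|\cdot-x^*\|^2$.'' The ``description above'' reads criticality off the vanishing of the determinants in $\mathscr{D}$ \emph{only} at points where some $(n-1)$-subset of the $\phi_i$'s has Jacobian of rank $n-1$. What Corollary~\ref{cor::curve2} and Theorem~\ref{th::dim1} give you is that $\bfV_\RR(\mathcal{I})\setminus\{x^*\}$ is locally a $C^\infty$ one-dimensional manifold --- a strictly weaker property for a real algebraic set, which does not force the Jacobian rank condition (a real variety can be a perfectly smooth manifold at a point where every $(n-1)$-minor of the Jacobian of the generators of its radical vanishes). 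At such a point all elements of $\mathscr{D}$ vanish automatically, the point lies in $\bfV_\RR(\Delta_{\mathcal{I}})$ regardless of any criticality, and your perpendicularity computation $\frac{d}{dt}\|\phi(t)-x^*\|^2\equiv 0$ no longer follows; a priori these points could fill a whole arc accumulating at $x^*$. The same conflation affects your proof of the second statement: the nonvanishing of some determinant in $\mathscr{D}$ is what simultaneously gives the local one-manifold structure and noncriticality, and you cannot extract it from $C^\infty$-smoothness of the real trace alone. You flag this risk yourself in your closing sentence, but the resolution you offer does not close it. The way to close it is exactly the complex finiteness the paper exploits: since $\bfV_\CC(\mathcal{I})$ is one-dimensional and the $\phi_i$ generate $\sqrt{\mathcal{I}}$, the locus where the Jacobian rank drops below $n-1$ is a finite set, so it cannot accumulate at $x^*$; with that in hand, your Curve Selection argument does go through.
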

\begin{proof}
Let $\mathcal{I}=\mathcal{I}^{(0)}\cap \mathcal{I}^{(1)}$ be the
equidimensional decomposition of $\mathcal{I}$, where
$\dim(\mathcal{I}^{(i)})=i, i=0,1$.
	Then, $\bfV_\CC(\Delta_{\mathcal{I}})$ contains three parts:
	$\bfV_\CC(\mathcal{I}^{(0)})$, the singular locus of
	$\bfV_\CC(\mathcal{I}^{(1)})$ and the set of critical
	points of the map
		\[
			\bfV_\CC(\mathcal{I}^{(1)}) \rightarrow \CC, \quad  x
			\mapsto \sum_{i=1}^n(x_i- x_i^*)^2.
\]
As the singular locus of $\dim(\mathcal{I}^{(1)})$ is
zero-dimensional and there are finitely many critical values of the
above map by Sard's theorem, we obtain $\mathscr{R}_{\mathcal{I}}>0$.
For the second statement,
see \cite[Lemma 4.10]{GP2017} and \cite[Theorem 4.11]{GP2017} for the
details of the proof.
\end{proof}

Let $q\in\RR[x]$ be the sums of squares of the elements in
$\Delta_{\mathcal{I}}$, then
$\bfV_\RR(\Delta_{\mathcal{I}})=\bfV_\RR(q)$. Apply the subroutine
{\sf  AlgSamp} on $q+(\Vert x-x^*\Vert^2-x_{n+1})^2$ and obtain
$v_0,v_i,u_i\in\RR[t]$, $i=1,\ldots,n+1$. We can set $R_2>0$ to be a
number satisfying {\sf Num}($u_{n+1}v_{n+1},v_0)=${\sf
Num}($u_{n+1}v_{n+1}-R^2_2 v^2_{n+1},v_0$) which holds for all $R_2>0$
small enough by the proof of Proposition \ref{pro::dim}.

\vskip 5pt
\noindent {\textsl{Step 4:}} As $\dim(\mathcal{I})=1$, the ideal
$\widetilde{\mathcal{I}}=\mathcal{I}+\langle\Vert
x-x^*\Vert^2-r^2\rangle$
is zero-dimensional for any $0<r<\mathscr{R}_{\mathcal{I}}$ (c.f.
\cite[Proposition 5.3]{GP2017}).  Apply the subroutine
{\sf  RURr} on
$\widetilde{\mathcal{I}}+\langle
f(x)-x_{n+1}\rangle\subset\RR[x,x_{n+1}]$ and obtain
$v_0,v_i,u_i\in\RR[t]$, $i=1,\ldots,n+1$. Compute the numbers
$n_1=${\sf
Num}($u_{n+1}v_{n+1}-f(x^*)v^2_{n+1},v_0$) and  $n_2=${\sf
Num}($f(x^*)v^2_{n+1}-u_{n+1}v_{n+1},v_0$). If $n_1=0$, then
$f_r^+<f(x^*)$. If $n_2=0$, then $f_r^->f(x^*)$. If $n_1>0$ and
$n_2>0$, then $f_r^-<f(x^*)<f_r^+$.

\begin{remark}{\rm
		(i) For a zero-dimensional ideal $I\subset\RR[x]$, the command
		{\sf Isolate} in {\scshape Maple}, which can employ the RUR
		algorithm \cite{Rouillier1999} as a subroutine, is available to compute
		isolating intevals for each point in $\bfV_\RR(I)$. Hence, in
		practice, we can use this command in \textsl{Step 2} and \textsl{Step 4}, as well as in \textsl{Step 3} if $\dim(\langle
		\Delta_{\mathcal{I}}\rangle)=0$ (see Example \ref{ex::ex2}).
		However, the whole arithmetic complexity of the
		command {\sf Isolate} is not known to us.
		
(ii).  Although Algorithm \ref{al::mainal} works in
theory for any problem of the form \eqref{eq::P} with an
isolated  KKT point $x^*\in\RR^n$ and $f$, $g_i$'s, $h_j$'s
$\in\mathbb{R}[X],$ we would like to remark that if either some
coordinate of $x^*$ is not rational or some of $f$, $g_i$'s, $h_j$'s
are not in $\mathbb{Q}[X],$ then the defining polynomials of
$\Gamma(f, S, x^*)$ may not be in $\mathbb{Q}[X]$ and some
difficulties may arise in implementing the algorithms proposed in this
paper. That is because many algebraic computations which can be done
in the current computer algebra systems, like {\scshape Maple}, is
more efficient or only available in the rational number field.
}
\end{remark}

\begin{example}\label{ex::ex2}{\rm
		We consider three optimization problems in the following. For
		each one,
it is easy to check that $\bfz$ is a KKT point with the second-order
necessary optimality condition holds but the second-order sufficient
condition does not. Hence, we can not decide the type of $\bfz$ by
linear algebra.

For each problem, we can check in {\scshape Maple}
that $\dim(\mathcal{I}_\Sigma)=\dim(\mathcal{I}_L)=\dim(\langle
		\Delta_{\mathcal{I}}\rangle)=0$, $\dim(\mathcal{I}_\Gamma)=1$
		and $\mathcal{I}_\Gamma$ itself is radical. Hence, $\mathcal{I}=\mathcal{I}_\Gamma$
		without linear change of coordinates. The command
		{\sf Isolate} in {\scshape Maple} enables us to compute $R_1$
		in \textsl{Step 2} and $R_2$ in \textsl{Step
		3}, as well as intervals $[a_1,b_1]$ and $[a_2,b_2]$ such that
		$f_r^{-}\in[a_1,b_1]$, $f_r^{+}\in[a_2,b_2]$ and
		$f(0)\not\in[a_1,b_1]\cup[a_2,b_2]$ in \textsl{Step 4}.
		Then, by Theorem~\ref{th::con}, it is easy to see that $0$ is
		a local minimizer if
		$a_1>f(0)$, $0$ is a local maximizer if $b_2<f(0)$ and
		$0$ is not an extremum point if $a_1<f(0)<b_2$.
The whole process for each problem takes only a few seconds on a
laptop with two 1.3 GHz cores and 8GB RAM.
\begin{enumerate}[\upshape (1)]
	\item
		Consider the optimization problem
		\begin{equation}\label{eq::ex2}
			\min_{x\in\RR^3}\ 2x_2^4+x_3^4-4x_1^2\qquad \text{s.t.}\quad
			-x_2x_3-x_3^2+2x_1=0.
		\end{equation}
		As $\varepsilon\rightarrow 0$, the two sequence of feasible points
		$(\varepsilon^2,\varepsilon,\varepsilon)$ and $(0,\varepsilon,0)$
		imply that $\bfz$ is not an extremum point of \eqref{eq::ex2}.

		We get that $R_1$ can be chosen to be any positive number and
		$R_2=\sqrt{\frac{30592520018291640355}{1152921504606846976}}\approx
		5.1511$ in \textsl{Step} 2 and 3.
		Let $r=1$ in \textsl{Step} 4, we obtain
		$a_1=-\frac{8133982313870021995}{18446744073709551616}\approx
		-0.4409$
		and $b_2=2$, which certify that $\bfz$ is not an extremum point of
		\eqref{eq::ex2}.
	\item
Consider the optimization problem
\begin{equation}\label{eq::ex3}
	\min_{x\in\RR^3}\ x_1^2+x_2^2+x_3^3\qquad \text{s.t.}\quad
	-x_1x_3^2+x_2x_3^2+x_2^2+x_1=0
\end{equation}
As $\varepsilon\rightarrow 0$, the two sequence of feasible points
$(0,0,\varepsilon)$ and $(0,0,-\varepsilon)$
imply that $\bfz$ is not an extremum point of \eqref{eq::ex3}.

We get
$R_1=\sqrt{\frac{7456077067994313975}{2305843009213693952}}\approx
1.7982$ and
$R_2=\sqrt{\frac{32794211686594305343}{73786976294838206464}}\approx
0.6667$ in \textsc{Step} 2 and 3.
Let $r=\frac{1}{2}$ in \textsl{Step} 4, we obtain
$a_1=-\frac{1}{8}$ and
$b_2=\frac{9223372036854837203}{36893488147419103232}\approx 0.2500$, which
certify that $\bfz$  is not an extremum point of \eqref{eq::ex3}.
\item 		
Consider the optimization problem
\begin{equation}\label{eq::ex}
	\left\{
	\begin{aligned}
		\min_{x\in\RR^3}&\
		-x_1x_2^4-x_3x_2^4+x_1^4+2x_1^2x_3^2+x_3^4-4x_1^2x_3-4x_3^3+x_1^2+4x_3^2\\
	\text{s.t.}&\ x_1^2+x_2^2+x_3^2-2x_3=0.
	\end{aligned}\right.
\end{equation}
Note that the objective can be rewritten as
$x_1^2+(-x_1^2-x_3^2+2x_3)^2-x_1x_2^4-x_3x_2^4$. Then, the value of
the objective at any feasible point $u\in\RR^3$ with $\Vert
u\Vert^2<\frac{1}{4}$ is $u_1^2+(1-u_1-u_3)u_2^4>0$,
which implies that $\bfz$ is a strict local minimizer of
\eqref{eq::ex}.

We get $R_1=\sqrt{\frac{32743693424004235509}{36893488147419103232}}\approx
0.9420$ and $R_2$ can be chosen to be any positive number less than
$2$ in \textsc{Step} 2 and 3.
Let $r=\frac{4}{5}$ in \textsc{Step} 4, we obtain
$a_1=\frac{5847027727233270915}{36893488147419103232}\approx 0.1585$
and $b_2=\frac{19833939228052796449}{36893488147419103232}\approx 0.5376$, which
certify that $\bfz$ is a minimizer.
\end{enumerate}
As mentioned in Section \ref{sec::intro}, the type of the KKT point
$x^*$ can be determined as a quantifier elimination problem by CAD
based algorithms. We apply the {\sf QuantifierElimination}
command (c.f. \cite{chenICMSa,CHEN201674}) of the
{\sf RegularChains}\footnote{RegularChains: http://www.regularchains.org/}
library in {\scshape Maple} to the above problems by determining the
truth of the sentences \eqref{eq::cad}. For the problem \eqref{eq::ex2}, it took
about 17 hours to determine the truth of \eqref{eq::cad}. For the
problems \eqref{eq::ex3} and \eqref{eq::ex}, the {\sf QuantifierElimination}
command kept running for days without any output, which shows
the efficiency of our method.
}
\end{example}

\section{Conclusions} \label{SectionConclusions}
By investigating some properties of the set of KKT points in
\eqref{eq::P} and the tangency variety of $f$ at the isolated KKT
point $x^*$ over $S$, we give the definition of faithful radius
of $x^*$ and show that the type of $x^*$ can be determined by the global
extrema of $f$ over the intersection of $S$ and the Euclidean ball
centered at $x^*$ with a faithful radius. An algorithm involving
algebraic computations for determining the type of $x^*$ is presented.

If $x^*$ is a non-isolated KKT point, then the method proposed in this
paper does not apply. In particular, since the condition (iii) in
Definition \ref{def::faithr} does not hold for any $R$, we can not
determine the local extremality of $x^*$ by investigating the local
values of $f$ on its tangency variety on $S$ at $x^*$ as in Theorem
\ref{th::main} and \ref{th::con}.
The extension of our method in the present paper to
the non-isolated case will be studied in future work.

\subsection*{Acknowledgments}
The first author was supported by the Chinese National Natural Science Foundation under grant 11571350.
The second author was supported by the National Research Foundation of Korea (NRF) Grant funded by the Korean Government (NRF-2019R1A2C1008672).
The third author was supported by Jiangsu Planned Projects for Postdoctoral Research Funds 2019 (no. 2019K151).
The forth author was supported by Vietnam National Foundation for Science and Technology Development (NAFOSTED), grant 101.04-2019.302

\appendix
\section{On Algorithm \ref{al::mainal}}\label{appendix}

\subsection{Correctness}\label{sec::correctness}

For a given invertible matrix
$A\in\mathscr{I}^{n\times n},$ replace $f$, $g_i$'s, $x^*$ by $f^A$,
$g^A_i$'s, $A^{-1}x^*$ in
the definition of $\mathcal{I}_\Gamma, \mathcal{I}_\Sigma$ and denote
the resulting ideals by $\mathcal{I}^A_\Gamma$,
$\mathcal{I}^A_\Sigma$, respectively.

\begin{theorem}\label{th::dim1}
There exists a non-empty Zariski open set $\mathcal{E} \subset \mathbb{C}^{n \times
n}$ such that for all $A\in\mathcal{E}\cap\RR^{n\times n},$ the
Zariski closure
${\overline{\bfV_\CC(\mathcal{I}^A_\Gamma)\setminus\bfV_\CC(\mathcal{I}^A_\Sigma)}}^\mathcal{Z}$
is a one-dimensional algebraic variety in $\CC^n.$
\end{theorem}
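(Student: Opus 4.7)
The plan is to bracket the dimension of $\overline{\bfV_\CC(\mathcal{I}^A_\Gamma)\setminus\bfV_\CC(\mathcal{I}^A_\Sigma)}^{\mathcal Z}$ from above by $1$ via a parametric incidence argument in the complex setting, and from below by $1$ by invoking Corollary~\ref{cor::curve2}. First I would pull back to the original coordinates via $y=A^{-1}x$: since $\nabla f^A(y)=A^T\nabla f(Ay)$ and similarly for the $g_i^A$, the matrix whose $(l+2)\times(l+2)$ minors define $\mathcal{I}_\Gamma^A$ factors as $A^T\cdot[\nabla f(x),\nabla g_1(x),\ldots,\nabla g_l(x),M(x-x^*)]$ with $M:=A^{-T}A^{-1}$ and $x=Ay$. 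Because $A^T$ is invertible, $\bfV_\CC(\mathcal{I}^A_\Gamma)=A^{-1}\!\cdot V_\Gamma^M$ and $\bfV_\CC(\mathcal{I}^A_\Sigma)=A^{-1}\!\cdot V_\Sigma$, where $V_\Sigma$ is $A$-independent and
\[
V_\Gamma^M \ := \ \{x\in\CC^n\,:\,g_i(x)=0\ \forall i,\ \operatorname{rank}[\nabla f(x),\nabla g_1(x),\ldots,\nabla g_l(x),M(x-x^*)]\leq l+1\}.
\]
Since linear isomorphisms of $\CC^n$ preserve Krull dimension, it suffices to show that $\dim\overline{V_\Gamma^M\setminus V_\Sigma}^{\mathcal Z}=1$ for $A$ in a non-empty Zariski-open subset of $\gl_n(\CC)$.

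For the upper bound I would introduce the complex incidence variety
\[
\Omega \ := \ \overline{\bigl\{(x,\kappa,\lambda,A)\in(\CC^n\setminus\{x^*\})\times\CC^{l+1}\times\gl_n(\CC)\,:\,g_i(x)=0,\ \kappa\nabla f(x)-\sum_{i=1}^l\lambda_i\nabla g_i(x)=A^{-T}A^{-1}(x-x^*)\bigr\}}^{\mathcal Z}
\]
and compute its dimension by factoring the forgetful projection $p:\Omega\to\CC^n\times\gl_n(\CC)$ that drops $(\kappa,\lambda)$: on the stratum where $\nabla f(x),\nabla g_1(x),\ldots,\nabla g_l(x)$ are linearly independent (the complement of $V_\Sigma$), the Lagrange-type equation uniquely determines $(\kappa,\lambda)$ from $(x,A)$, so $p$ is generically injective. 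The image of $p$ is cut out of $\CC^n\times\gl_n(\CC)$ by the $l$ equations $g_i(x)=0$ together with the $(n-l-1)$ ``linear dependence'' conditions expressing that $A^{-T}A^{-1}(x-x^*)$ lies in the $(l+1)$-dimensional span of $\nabla f(x),\nabla g_i(x)$; this image has dimension $n^2+1$, hence $\dim\Omega=n^2+1$. Applying the fiber dimension theorem to the projection $\pi:\Omega\to\gl_n(\CC)$, which is dominant because $V_\Gamma^M\setminus V_\Sigma$ is non-empty for every real $A$ near the identity by Lemma~\ref{property}(iii), I obtain a non-empty Zariski-open set $\mathcal{E}_1\subseteq\gl_n(\CC)$ such that $\pi^{-1}(A)$ has pure dimension $1$ for each $A\in\mathcal{E}_1$. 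Projecting $\pi^{-1}(A)$ back to $\CC^n$ via $p$ gives an image whose Zariski closure is precisely $\overline{V_\Gamma^M\setminus V_\Sigma}^{\mathcal Z}$, of dimension at most $1$.

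For the lower bound, Corollary~\ref{cor::curve2} furnishes a non-empty Zariski-open $\mathcal{E}_2\subseteq\RR^{n\times n}$ on which the real tangency minus KKT set is already a smooth real one-dimensional manifold near $A^{-1}x^*$; its Zariski closure in $\CC^n$ is a complex subvariety of $\overline{\bfV_\CC(\mathcal{I}_\Gamma^A)\setminus\bfV_\CC(\mathcal{I}_\Sigma^A)}^{\mathcal Z}$ of complex dimension at least $1$. Since the defining polynomial of the complement of $\mathcal{E}_2$ has real coefficients (it is built from $f,g_1,\ldots,g_l$ and the coordinates of $x^*$), its complexification defines a Zariski-open $\mathcal{E}_2'\subseteq\CC^{n\times n}$ with $\mathcal{E}_2'\cap\RR^{n\times n}\subseteq\mathcal{E}_2$, and then $\mathcal{E}:=\mathcal{E}_1\cap\mathcal{E}_2'$ is a non-empty Zariski-open subset of $\CC^{n\times n}$ with real points realising both bounds. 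The main obstacle I foresee is in the dominance and dimension computations, specifically verifying that on each relevant stratum a maximal linearly independent subfamily of $\{\nabla g_i\}$ remains so under the perturbation (a stratification argument in the spirit of the proof of Theorem~\ref{th::curve}), and checking that the forgetful projection $p$ is indeed generically injective; once those are in place, the fiber dimension theorem closes the argument without needing a strong Bertini-smoothness statement.
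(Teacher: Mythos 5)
Your overall architecture is the same as the paper's: reduce via $M=A^{-T}A^{-1}$ to a family of tangency varieties parametrized by (symmetric) metrics, prove the bound $\dim\le 1$ by a genericity argument over that parameter space, and get $\dim\ge 1$ from the real curve of Corollary~\ref{cor::curve2}. The paper packages the genericity step as the algebraic version of Thom's weak transversality theorem applied to $\Phi(x,\kappa,\lambda,P)=(\kappa\nabla f(x)-\sum_i\lambda_i\nabla g_i(x)-P(x-x^*),g_1(x),\dots,g_l(x))$ on $(\CC^n\setminus\bfV_\CC(\mathcal{I}_\Sigma))\times\CC\times\CC^l\times\mathscr{S}_\CC^{n\times n}$; your incidence variety plus the fiber dimension theorem is the same mechanism in different clothing. (You are actually more explicit than the paper about the lower bound, which the paper leaves implicit.)

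The step you cannot leave as an assertion is $\dim\Omega=n^2+1$, equivalently that the image of $p$ has codimension exactly $n-1$. Counting the $l$ equations $g_i=0$ together with the $n-l-1$ rank conditions bounds the codimension of each component only from \emph{above}, i.e., it gives $\dim\ge n^2+1$; the fiber dimension theorem needs the reverse inequality, and that is precisely where the transversality content of the whole theorem lives. The paper supplies it by differentiating $\Phi$ in the directions $(p_{11},\dots,p_{1n})$: at a point with $x_1\neq 0$ and $\mu\neq 0$ this block is $-\mu$ times a matrix with $x_1$ on the diagonal, which together with the rows $[\nabla g_i(x)]^T$ (independent near $x^*$ by Lemma~\ref{regular1}) makes $0$ a regular value of the parametric map. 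An equivalent repair in your setup is to bound the fibers of $\Omega\to\{x\}$: for fixed $x\neq x^*$ off $\bfV_\CC(\mathcal{I}_\Sigma)$ with $g(x)=0$, the map $A\mapsto A^{-T}A^{-1}(x-x^*)$ has $n$-dimensional image, so the locus of $A$ sending $x-x^*$ into the fixed $(l+1)$-plane spanned by the gradients has dimension $n^2-(n-l-1)$, whence $\dim\Omega\le (n-l)+n^2-(n-l-1)=n^2+1$. Two smaller points: define $\Omega$ over $x\in\CC^n\setminus\bfV_\CC(\mathcal{I}_\Sigma)$ rather than $\CC^n\setminus\{x^*\}$, since otherwise the closure may acquire higher-dimensional components sitting over $\bfV_\CC(\mathcal{I}_\Sigma)$ where $p$ has positive-dimensional fibers; and since $\Omega$ need not be irreducible, the fiber-dimension argument must be run component by component, throwing the (proper, closed) images of the non-dominating components into the bad set.
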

\begin{proof}
	Since $\nabla f^A(x)=A^T\nabla f(Ax)$ for any $f(x)\in\RR[x]$ and
	$A\in\mathscr{I}^{n\times n}$, we have
	$\bfV_\CC(\mathcal{I}_\Sigma^A)=A^{-1}(\bfV_\CC(\mathcal{I}_\Sigma))$.
Let $S_\CC=\bfV_\CC(\langle g_1,\ldots,g_l\rangle)$.
Denote $\mathscr{S}_\CC^{n\times n}$ as the set of symmetric matrices
in $\CC^{n\times n}$, which can be identified with the space $\CC^{\frac{n(n+1)}{2}}$.
	For any
	$P\in\mathscr{S}_\CC^{n\times n}$, define
	\[
		\begin{aligned}
			\Gamma_{\CC,P}(f,S_\CC,x^*) := \{x\in S_\CC\mid&\ \exists \kappa, \lambda_i,
				\mu\in\CC\ \text{not all zeros, s.t. } \\
				&\ \kappa\nabla f(x)-\Sigma_{i=1}^l\lambda_i\nabla g_i(x)-\mu
				P(x-x^*)=0\}.
			\end{aligned}
	\]
	Then, it is easy to check that
	$\bfV_\CC(\mathcal{I}^A_\Gamma)=A^{-1}(\Gamma_{\CC,A^{-T}A^{-1}}(f,S_\CC,x^*))$
	and hence
	$\bfV_\CC(\mathcal{I}^A_\Gamma)\setminus\bfV_\CC(\mathcal{I}^A_\Sigma)
	=A^{-1}(\Gamma_{\CC,A^{-T}A^{-1}}(f,S_\CC,x^*)\setminus\bfV_\CC(\mathcal{I}_\Sigma))$.

Define the map
$\Phi\colon
\left(\CC^n\setminus\bfV_\CC(\mathcal{I}_\Sigma)\right)\times\CC\times\CC^l\times\mathscr{S}_\CC^{n\times
n}\rightarrow\CC^n\times\CC^l$
by
\[
\Phi(x,\kappa,\lambda, P):=\big(\kappa\nabla f(x)-\sum_{i =
1}^l\lambda_i \nabla g_i(x)-P(x-x^*), g_1(x), \ldots, g_l(x)\big).
\]
Here, we rescale the coefficient $\mu$ to be $1$ since $x$ in the
domain of $\Phi$ is from $\CC^n\setminus\bfV_\CC(\mathcal{I}_\Sigma).$ Similar to the proof of Theorem \ref{th::curve},
it can be shown that $0$ is a regular value of $\Phi.$ Then according
to the algebraic version of Thom's weak transversality theorem
(c.f. \cite[Ch.~3, Theorem~3.7.4]{BC2000}, \cite{Bank2010}, \cite[Proposition~B.3]{SS2013}),
there exists a Zariski closed subset
$\mathcal{V}_\CC\subset\mathscr{S}_\CC^{n\times n}$ such that for all
$P\in\mathscr{S}_\CC^{n\times n}\backslash\mathcal{V}_\CC$,
$0$ is a regular value of the map
\[
\Phi_P \colon \left(\CC^n\setminus\bfV_\CC(\mathcal{I}_\Sigma)\right)\times\CC\times\CC^l
\rightarrow\CC^n\times\CC^l\qquad(x, \kappa , \lambda)\ \mapsto \  \Phi(x, \kappa, \lambda, P).
	\]
It follows that $\Phi^{-1}_P(0)$ is either empty or a one-dimensional
quasi-affine set of $\CC^{n+1+l}$.
Note that
$\Gamma_{\CC,P}(f,S_\CC,x^*)\setminus\bfV_\CC(\mathcal{I}_\Sigma)$ is
the projection of $\Phi^{-1}_P(0)$ on the first $n$ coordinates.
Then we have
$\dim(\overline{\Gamma_{\CC,P}(f,S_\CC,x^*)
\setminus\bfV_\CC(\mathcal{I}_\Sigma)}^\mathcal{Z})\le 1$ for all
$P\in\mathscr{S}_\CC^{n\times n}\backslash\mathcal{V}_\CC$.
As the Zariski closure
$$\mathcal{V}_\CC^{-1} := \overline{\{P\in\mathscr{S}_\CC^{n \times n}
\ | \ P^{-1}\in\mathcal{V}_\CC\}}^{\mathcal{Z}}$$
is an algebraic set in $\mathbb{C}^{\frac{n(n+1)}{2}}$,
the set $$\{A\in\mathcal{\CC}^{{n \times n}} \ | \
AA^T\in\mathcal{V}_\CC^{-1}\}$$
is an algebraic set in $\CC^{n\times n}$. Let $\mathscr{I}_\CC^{n\times
n}$ be the set of invertible matrices in $\CC^{n\times n}$. It follows that
$\mathcal{E} := \{A \in \mathcal{\CC}^{{n \times n}} \ | \
AA^T\not\in\mathcal{V}_\CC^{-1}\}\cap\mathscr{I}^{n\times n}_\CC$
is an non-empty Zariski open set of $\mathcal{\CC}^{n \times n}.$
Then, for all $A \in \mathcal{E}\cap\RR^{n\times n}$, ${\overline{\bfV_\CC(\mathcal{I}^A_\Gamma)\setminus\bfV_\CC(\mathcal{I}^A_\Sigma)}}^\mathcal{Z}
=\overline{A^{-1}(\Gamma_{\CC,A^{-T}A^{-1}}(f,S_\CC,x^*)\setminus\bfV_\CC(\mathcal{I}_\Sigma))}^\mathcal{Z}$
is a one-dimensional algebraic variety in $\CC^n$.
\end{proof}

\begin{proposition}\label{prop::r}
Suppose that (LICQ) holds at $x^*$ and $x^*$ is an
isolated KKT point. Then, there exists a $R_1\in\RR_+$ such that
$\bfV_\RR(\mathcal{I}_\Sigma)\cap \Bbb B_{R_1} (x^*)=\{x^*\}$ and
$\bfV_\RR(\mathcal{I}_L)\cap \Bbb B_{R_1} (x^*)=\{x^*\}$. For the
ideal $\mathcal{I}$ in Algorithm \ref{al::mainal}, it holds that
$\GAMMA\cap\mathbb{B}_R(x^*)=\bfV_\RR(\mathcal{I})\cap\mathbb{B}_R(x^*)$
any $0<R<R_1$.
\end{proposition}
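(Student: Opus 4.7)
My plan is to prove the three claims of the proposition in order: the local isolation of $\bfV_\RR(\mathcal{I}_\Sigma)$ at $x^*,$ the local isolation of $\bfV_\RR(\mathcal{I}_L)$ at $x^*,$ and the local identification $\GAMMA\cap\mathbb{B}_R(x^*)=\bfV_\RR(\mathcal{I})\cap\mathbb{B}_R(x^*).$ Throughout I will use that, by the reduction performed at the start of Section~\ref{sec::computation}, the feasible set $S$ is cut out by equalities only, so the KKT condition at $x\in S$ is just that $\nabla f(x)$ lies in the span of $\nabla g_1(x),\ldots,\nabla g_l(x).$

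For the first isolation claim, I would combine the linear-independence neighbourhood provided by Lemma~\ref{regular1} with the assumption that $x^*$ is an isolated KKT point. Lemma~\ref{regular1} supplies a ball $\mathbb{B}_{R_\Sigma}(x^*)$ on which (LICQ) holds, so that on this ball the $n\times(l+1)$ matrix $[\,\nabla f\mid\nabla g_1\mid\cdots\mid\nabla g_l\,]$ has rank less than $l+1$ precisely when $\nabla f(x)$ is a linear combination of the $\nabla g_i(x);$ hence $\bfV_\RR(\mathcal{I}_\Sigma)\cap\mathbb{B}_{R_\Sigma}(x^*)=\Sigma(f,S)\cap\mathbb{B}_{R_\Sigma}(x^*),$ and the isolation of the KKT point $x^*$ does the rest after shrinking $R_\Sigma$ if necessary. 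For $\bfV_\RR(\mathcal{I}_L),$ Lemma~\ref{includex} is tailored to the task: it furnishes a radius $R_L>0$ so that for every $x\in S\cap\mathbb{B}_{R_L}(x^*)\setminus\{x^*\}$ the vectors $\nabla g_1(x),\ldots,\nabla g_l(x),x-x^*$ are linearly independent, which is exactly the non-vanishing of some maximal minor generating $\mathcal{I}_L;$ since $x^*\in\bfV_\RR(\mathcal{I}_L)$ trivially, setting $R_1:=\min\{R_\Sigma,R_L\}$ yields both isolation statements.

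For the local equality, I would split according to how Algorithm~\ref{al::mainal} produces $\mathcal{I}.$ The case $\mathcal{I}=\mathcal{I}_\Gamma$ is immediate from $\GAMMA=\bfV_\RR(\mathcal{I}_\Gamma),$ so the substantive case is $\mathcal{I}=\mathcal{G}.$ I would argue by double inclusion. First, $\mathcal{I}_\Gamma\subseteq\mathcal{G},$ because any polynomial in $\mathcal{I}_\Gamma$ vanishes on $\bfV_\CC(\mathcal{I}_\Gamma),$ hence on the subset $\bfV_\CC(\mathcal{I}_\Gamma)\setminus\bfV_\CC(\mathcal{I}_\Sigma)$ and therefore on its Zariski closure; this gives $\bfV_\RR(\mathcal{G})\subseteq\bfV_\RR(\mathcal{I}_\Gamma)=\GAMMA.$ Conversely, let $x\in\GAMMA\cap\mathbb{B}_R(x^*)$ with $R<R_1.$ If $x\neq x^*,$ the previous paragraph guarantees $x\notin\bfV_\RR(\mathcal{I}_\Sigma),$ so $x\in\bfV_\CC(\mathcal{I}_\Gamma)\setminus\bfV_\CC(\mathcal{I}_\Sigma)\subseteq\bfV_\CC(\mathcal{G}),$ i.e.\ $x\in\bfV_\RR(\mathcal{G}).$ The third branch of the algorithm only differs by a generic linear change of coordinates, after which Theorem~\ref{th::dim1} gives $\dim(\mathcal{G}^A)=1$ and the same argument applies verbatim to $f^A,g_i^A,A^{-1}x^*.$

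The main obstacle will be the point $x^*$ itself when $\mathcal{I}=\mathcal{G},$ since $x^*$ may well lie in $\bfV_\CC(\mathcal{I}_\Sigma)$ and therefore need not belong to $\bfV_\CC(\mathcal{I}_\Gamma)\setminus\bfV_\CC(\mathcal{I}_\Sigma)$ itself. To settle it I will invoke Corollary~\ref{cor::notconstant} to justify the hypothesis of Lemma~\ref{property}~(iii), which shows that $x^*$ is a Euclidean limit point of $\GAMMA\setminus\Sigma(f,S),$ a set contained in $\bfV_\CC(\mathcal{I}_\Gamma)\setminus\bfV_\CC(\mathcal{I}_\Sigma).$ Any polynomial that vanishes identically on a set vanishes by continuity on its Euclidean closure, so every generator of $\mathcal{G}$ vanishes at $x^*;$ thus $x^*\in\bfV_\RR(\mathcal{G}),$ closing the argument. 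The key conceptual point is this passage from Euclidean accumulation to Zariski membership, bridged by polynomial continuity.
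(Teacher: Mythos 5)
Your proof is correct and follows essentially the same route as the paper: Lemmas \ref{regular1} and \ref{includex} for the two isolation claims, and the same double-inclusion/Zariski-closure argument (with Lemma \ref{property}~(iii) supplying $x^*$ as a limit point of $\GAMMA\setminus\Sigma(f,S)$, which is exactly how the paper gets $x^*\in\overline{\GAMMA\setminus\Sigma(f,S)}$) for the identification of $\GAMMA$ with $\bfV_\RR(\mathcal{G})$ inside the ball. The only loose phrase is the claim that $\GAMMA\setminus\Sigma(f,S)$ is contained in $\bfV_\CC(\mathcal{I}_\Gamma)\setminus\bfV_\CC(\mathcal{I}_\Sigma)$ --- globally a point of $\GAMMA\setminus\Sigma(f,S)$ could still lie in $\bfV_\RR(\mathcal{I}_\Sigma)$ where (LICQ) fails --- but since your limit-point argument is local and the containment does hold on $\mathbb{B}_{R_1}(x^*)\setminus\{x^*\}$, nothing breaks.
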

\begin{proof}
	By Lemma \ref{includex}, there exists a $\widetilde{R}\in\RR_+$ such that
	$\bfV_\RR(\mathcal{I}_L)\cap \Bbb B_{\widetilde{R}}
	(x^*)=\{x^*\}$. It is obvious that
	$\Sigma(f,S)\subseteq\bfV_\RR(\mathcal{I}_\Sigma)$. Due to Lemma
	\ref{regular1}, there exists a $\widehat{R}\in\RR_+$ such that
	$\Sigma(f,S)\cap \Bbb B_{\widehat{R}}
	(x^*)=\bfV_\RR(\mathcal{I}_\Sigma)\cap  \Bbb B_{\widehat{R}}
	(x^*)$. As $x^*$ is an isolated KKT point, by shrinking
	$\widehat{R}$ if necessary, we have
	$\bfV_\RR(\mathcal{I}_\Sigma)\cap \Bbb B_{\widehat{R}}
	(x^*)=\{x^*\}$. Consequently, we can let
	$R_1=\min\{\widetilde{R},\widehat{R}\}$.

As $\GAMMA=\bfV_\RR(\mathcal{I}_\Gamma)$, it is clear that
$\GAMMA\cap\mathbb{B}_R(x^*)=\bfV_\RR(\mathcal{I})\cap\mathbb{B}_R(x^*)$ for any
$R\in\RR_+$ if $\mathcal{I}=\mathcal{I}_{\Gamma}$. Thus, we consider the
case when $\mathcal{I}=\mathcal{G}$. As $R<R_1$, we have
	$\Sigma(f,S)\cap\mathbb{B}_R(x^*)=\bfV_\RR(\mathcal{I}_\Sigma)\cap\mathbb{B}_R(x^*)
	= \{x^*\}$. Then, it is easy to see that
	\[
		\begin{aligned}
			\GAMMA\cap\mathbb{B}_R(x^*)&=\overline{\GAMMA\backslash\Sigma(f,S)}\cap\mathbb{B}_R(x^*)\\
			&\subseteq\overline{\bfV_\RR(\mathcal{I}_\Gamma)\backslash\bfV_\RR(\mathcal{I}_\Sigma)}^{\mathcal{Z}}
			\cap\mathbb{B}_R(x^*)\subseteq\bfV_\RR(\mathcal{G})\cap\mathbb{B}_R(x^*).
		\end{aligned}
\]
It is clear that
$\GAMMA\cap\mathbb{B}_R(x^*)\supseteq\bfV_\RR(\mathcal{G})\cap\mathbb{B}_R(x^*)$
and thus the conclusion follows.
\end{proof}

\begin{proof}[Proof of Theorem \ref{th::al}]
As $\dim(\mathcal{I})=1$, by Sard's theorem (see, for example,
\cite[Corollary~1.1]{Ha2017}),
a real number $R_2>0$ in \textsl{Step 3} of Algorithm \ref{al::mainal}
always exists. Then, due to Theorem \ref{th::dim1} and Proposition
\ref{prop::r},  Algorithm \ref{al::mainal} can run successfully.
	It is clear that the number $R_1$ satisfies Condition
	\ref{con::curve} (i) and (ii). By the second statement of
	Propostion \ref{prop::r}, Condition \ref{con::curve} (iii) holds
	for any $0<r<\min\{R_1,R_2\}$ which implies that $r$ is a faithful
	radius. Then, the correctness of Algorithm \ref{al::mainal}
	follows by Theorem \ref{th::con}.
\end{proof}

\subsection{Discussions on complexity}\label{sec::complexity}

As the implementations involve algebraic
computations of vanishing ideals, critical point method, the radical of
an ideal and so on,
the complexity in our algorithms depends heavily on these
corresponding algorithms. Hence, we leave the complete complexity
analysis of Algorithm \ref{al::mainal} as our future work. Instead, we
present a general discussion on the complexity under the assumption
\begin{assumption}\label{ass::complexity}
	$\dim(\mathcal{I}_\Sigma)=\dim(\mathcal{I}_L)=0$ and the ideal
	$\mathcal{I}_{\Gamma}$ is radical.
\end{assumption}

We first recall the arithmetic complexity of the following
subroutines from the literature.

\begin{enumerate}[-]
	\item {\sf Num}\cite[Algorithm 10.14 and 10.15]{ARG}:
		$O(\deg(v)^2\deg(u)+\deg(v)^4\log_2(\deg(v)))$
	\item {\sf RURr}\cite[Sec. 5.1]{Rouillier1999}:
	$d^{O(n)}$ where $d$ is the maximal degree of the generators of
	$I$
\item {\sf AlgSamp}\cite[Algorithm 12.16]{ARG}: $\deg(p)^{O(n)}$
\end{enumerate}

Denote by $D$ the maximal degree of $f,g_1,\ldots,g_l$. For
simplicity, we use
$Dl+D$ to bound
the degrees of the generators in $\mathcal{I}_\Gamma$,
$\mathcal{I}_\Sigma$ and $\mathcal{I}_L$.
Note that $n\ge 2$, otherwise the problem is trivial.

\vskip 8pt

\noindent{\textsl{Step 1}}: As $\dim(\mathcal{I}_\Sigma)=0$, we have
$\mathcal{I}=\mathcal{I}_\Gamma$ without computing the vanishing ideal
$\mathcal{G}$.

\vskip 5pt

\noindent {\textsl{Step 2}}:
As
$\dim(\mathcal{I}_\Sigma)=\dim(\mathcal{I}_L)=0$, the arithmetic
complexity of applying {\sf RURr} on $\widetilde{\mathcal{I}}_\Sigma$
and $\widetilde{\mathcal{I}}_L$ is $(Dl+D)^{O(n+1)}$. As the numbers of
the points in $\bfV_\RR(\mathcal{I}_\Sigma)$ and
$\bfV_\RR(\mathcal{I}_L)$ are both bounded by the B{\'e}zout number
$(Dl+D)^n$, the degrees
of $v_0,u_i$,$v_i$ returned by {\sf RURr} are bounded by
$(Dl+D)^n$ \cite{Rouillier1999}. Therefore, the arithmetic complexity
of applying the
subroutine {\sf Num} is $O(n(Dl+D)^{4n}\log_2(Dl+D))$.

\vskip 5pt

\noindent {\textsl{Step 3:}}
Recall that $\mathcal{I}=\mathcal{I}_\Gamma$ which is assumed to be
radical.
Clearly, the degrees of polynomials in $\Delta_{\mathcal{I}}$ is
bounded by $n(Dl+D)$.
As $2n(Dl+D)\ge 4$, the arithmetic complexity of applying the
subroutine {\sf AlgSamp}
is $(2n(Dl+D))^{O(n+1)}$ and the degrees
of $v_0,u_i$,$v_i$ returned by {\sf AlgSamp} are bounded by
${O(2n(Dl+D))}^{n+1}$ \cite[pp. 493]{ARG}.
Therefore, the arithmetic complexity
of applying the
subroutine {\sf Num} is
$O({(2n(Dl+D))}^{4(n+1)}(n+1)\log_2(2n(Dl+D)))$.

\vskip 5pt

\noindent {\textsl{Step 4:}} As the maximal degree of the generators
of $\widetilde{\mathcal{I}}+\langle
f(x)-x_{n+1}\rangle$ is bounded by $Dl+D$, the arithmetic
complexity of applying {\sf RURr} on it is $(Dl+D)^{O(n+1)}$.
As the number of
the points in $\bfV_\RR(\widetilde{\mathcal{I}})$ is both bounded by
the B{\'e}zout number $(Dl+D)^n$, the degrees
of $v_0,u_i$,$v_i$ returned by {\sf RURr} are bounded by
$(Dl+D)^n$ \cite{Rouillier1999}. Therefore, the arithmetic complexity
of applying the
subroutine {\sf Num} is $O(n(Dl+D)^{4n}\log_2(Dl+D))$.

\begin{remark}{\rm
		Recal that the arithmetic complexity for solving the
		quantifier elimination problems \eqref{eq::cad} by the CAD is
		${((l+3)D)^{O(1)}}^{n+1}$ if $m=0$ \cite[Excercise 11.7]{ARG},
		which is doubly exponential in $n$.
		Comparitively, under Assumption \ref{ass::complexity}, the
		method proposed in this paper has a lower complexity, which
		can be observed from the numerical experiments in Section
		\ref{sec::computation}.
}
\end{remark}

\end{document}